\keywords{Cartesian Difference Categories; Cartesian Differential Categories; Change Actions; Calculus Of Finite Differences; Stream Calculus.}
\newcommand{\cact}[1]{\overline{#1}}
\newcommand{\D}[0]{\mathsf{D}}
\newcommand{\dd}[0]{{\boldsymbol\partial}}
\newcommand{\pair}[2]{\left\langle{#1},{#2}\right\rangle}
\newcommand{\four}[4]{\pair{\pair{#1}{#2}}{\pair{#3}{#4}}}
\newcommand{\pa}[1]{\left({#1}\right)}
\newcommand{\A}[1]{\mathbf{#1}}
\newcommand{\CdCax}[1]{{\bf [C$\dd$.{#1}]}}
\newcommand{\CdlCax}[1]{{\bf [C$\dd\lambda$.{#1}]}}
\newcommand{\CDCax}[1]{{\bf [CD.{#1}]}}
\newcommand{\CADax}[1]{{\bf [CAD.{#1}]}}
\newcommand{\Eax}[1]{{\bf [E.{#1}]}}
\newcommand{\CAax}[1]{{\bf [CA.{#1}]}}
\newcommand\Item[1][]{%
      \ifx\relax#1\relax  \item \else \item[#1] \fi
      \abovedisplayskip=0pt\abovedisplayshortskip=0pt~\vspace*{-\baselineskip}
}
\begin{document}

\title[Cartesian Difference Categories]{Cartesian Difference Categories}
%\titlecomment{{\lsuper*}OPTIONAL comment concerning the title, \eg,
 % if a variant or an extended abstract of the paper has appeared elsewhere.}

\author[M. Alvarez-Picallo]{Mario Alvarez-Picallo\rsuper{a}}	%required
\address{\lsuper{a}Department of Computer Science, University of Oxford}
\curraddr{Edinburgh Research Center, Huawei Technologies Research \& Development}	%required
\email{mario.alvarez-picallo@cs.ox.ac.uk}  %optional

\author[J-S. P. Lemay]{Jean-Simon Pacaud Lemay\rsuper{b}}	%optional
\address{\lsuper{b}Department of Computer Science, University of Oxford}	%optional
\email{jean-simon.lemay@cs.ox.ac.uk}  %optional
\thanks{The second author is financially supported by Kellogg College, the Oxford-Google Deep Mind Graduate Scholarship, and the Clarendon Fund.}	%optional

%% etc.

%% required for running head on odd and even pages, use suitable
%% abbreviations in case of long titles and many authors:

%%%%%%%%%%%%%%%%%%%%%%%%%%%%%%%%%%%%%%%%%%%%%%%%%%%%%%%%%%%%%%%%%%%%%%%%%%%

%% the abstract has to PRECEDE the command \maketitle:
%% be sure not to issue the \maketitle command twice!

\begin{abstract}
Cartesian differential categories are categories equipped with a differential combinator which axiomatizes the directional derivative. Important models of Cartesian differential categories include classical differential calculus of smooth functions and categorical models of the differential $\lambda$-calculus. However, Cartesian differential categories cannot account for other interesting notions of differentiation of a more discrete nature such as the calculus of finite differences. On the other hand, change action models have been shown to capture these examples as well as more ``exotic'' examples of differentiation. But change action models are very general and do not share the nice properties of Cartesian differential categories. In this paper, we introduce Cartesian difference categories as a bridge between Cartesian differential categories and change action models. We show that every Cartesian differential category is a Cartesian difference category, and how certain well-behaved change action models are Cartesian difference categories. In particular, Cartesian difference categories model both the differential calculus of smooth functions and the calculus of finite differences. Furthermore, every Cartesian difference category comes equipped with a tangent bundle monad whose Kleisli category is again a Cartesian difference category.
\end{abstract}

\maketitle

%% start the paper here:
\allowdisplaybreaks%

\section*{Acknowledgment}
  \noindent The authors would like to thank the anonymous reviewers for their comments and suggestions which helped improve this paper, as well as suggestions of potential interesting examples of Cartesian difference categories to study in the future (which we discuss in the conclusion).

  \section{Introduction}

In the early 2000's, Ehrhard and Regnier introduced the differential $\lambda$-calculus~\cite{ehrhard2003differential}, an extension of the $\lambda$-calculus equipped with a differential combinator capable of taking the derivative of arbitrary higher-order functions. This development, based on models of linear logic equipped with a natural notion of ``derivative''~\cite{ehrhard2018introduction}, sparked a wave of research into categorical models of differentiation.
One of the most notable developments in the area are Cartesian differential categories~\cite{blute2009cartesian}, introduced by Blute, Cockett and Seely, which provide an abstract categorical axiomatization of the directional derivative from differential calculus. The relevance of Cartesian differential categories lies in their ability to model both ``classical'' differential calculus (with the canonical example being the category of Euclidean spaces and smooth functions between) and the differential $\lambda$-calculus (as every categorical model for it gives rise to a Cartesian differential category~\cite{manzonetto2012categorical}). However, while Cartesian differential categories have proven to be an immensely successful formalism, they have, by design, some limitations. Firstly, they cannot account for certain ``exotic'' notions of derivative, such as the difference operator from the calculus of finite differences~\cite{richardson1954introduction}. This is because the axioms of a Cartesian differential category stipulate that derivatives should be linear in their second argument (in the same way that the directional derivative is), whereas these aforementioned discrete sorts of derivatives need not be. Additionally, every Cartesian differential category is equipped with a tangent bundle monad~\cite{cockett2014differential,manzyuk2012tangent} whose Kleisli category can be intuitively understood as a category of generalized vector fields.

More recently, discrete derivatives have been suggested as a semantic framework for understanding incremental computation. This led to the development of change structures~\cite{cai2014theory} and change actions~\cite{alvarez2019change}. Change action models have been successfully used to provide a model for incrementalizing Datalog programs~\cite{alvarez2019fixing}, and have also been shown to model the calculus of finite differences. Change action models, however, are very general, lacking many of the nice properties of Cartesian differential categories (for example, addition in a change action model is not required to be commutative), even though they are verified in most change action models. As a consequence of this generality, the tangent bundle endofunctor in a change action model can fail to be a monad.

In this work, we introduce Cartesian difference categories (Section~\ref{CdCsec}), whose key ingredients are an infinitesimal extension operator and a difference combinator, whose axioms are a generalization of the differential combinator axioms of a Cartesian differential category. In Section~\ref{CDCisCdCsec}, we show that every Cartesian differential category is a Cartesian difference category whose infinitesimal extension operator is zero, and conversely how every Cartesian difference category admits a full subcategory which is a Cartesian differential category. In Section~\ref{CdCisCAsec}, we show that every Cartesian difference category is a change action model, and conversely how a full subcategory of suitably well-behaved objects and maps of a change action model is a Cartesian difference category. In Section~\ref{EXsec} we provide some examples of Cartesian difference categories; notably, the calculus of finite differences and the stream calculus. In Section~\ref{monadsec}, we show that every Cartesian difference category comes equipped with a monad whose Kleisli category (Section~\ref{Kleislisec}) and a certain full sub-category of its Eilenberg-Moore category (Section~\ref{EMsec}) are again both Cartesian difference categories. Lastly, in Section~\ref{sec:differential-lambda-categories}, we briefly discuss difference $\lambda$-categories, which are Cartesian difference categories that are Cartesian closed. We conclude with a discussion on future work regarding Cartesian difference categories.

This paper is an extended version of a conference paper~\cite{alvarez2020cartesian} for Foundations of Software Science and Computation Structures: 23rd International Conference (FOSSACS 2020). While the overall structure and story of both versions are very similar, this version includes additional details, proofs, and results, as well as some important corrections. We highlight the main differences between this paper and the conference paper. Probably the most important change to point out is the correction made in Section~\ref{Kleislisec}. In this version, we correct the Cartesian difference structure of the Kleisli category of the tangent bundle monad. The proposed infinitesimal extension and difference combinator for said Kleisli category in the conference paper~\cite{alvarez2020cartesian} was based on a result from another paper~\cite{alvarez2019change}. Unfortunately, we have found that the result in said other paper is incorrect, and therefore so were the proposed infinitesimal extension and difference combinator for the Kleisli category in the conference paper. Section~\ref{CDCsec} and Section~\ref{CAsec}, the background sections, are mostly the same with some added background details. In Section~\ref{sec:CdC}, more detailed proofs and computations are provided throughout, and both Lemma~\ref{lem:d-epsilon} and Lemma~\ref{epsilon-nilponent} are new results. In Section~\ref{CDCisCdCsec}, we give a specific name to objects whose infinitesimal extensions is zero, which in Definition~\ref{def:ep-vanishing} we call $\varepsilon$-vanishing. In Section~\ref{CdCisCAsec} we introduce flat objects, whose definition (Definition~\ref{def:flat}) is slightly more general than the original one. Section~\ref{EXsec}, the example section, is mostly unchanged with some added details and proofs. In Section~\ref{monadsec}, we provide full detailed computations for all the necessary structure relating to the tangent bundle monad. Section~\ref{EMsec} and Section~\ref{sec:differential-lambda-categories} are both new sections.

  \section*{Conventions:} In an arbitrary category, we use the classical notation for composition as opposed to diagrammatic order which was used in~\cite{blute2009cartesian}: this means that the composite map $g \circ f: A \to C$ is the map which first does $f: A\to B$ then $g: B \to C$. By a Cartesian category we mean a category $\mathbb{X}$ with chosen finite products where we denote the binary product of objects $A$ and $B$ by $A \times B$ with projection maps $\pi_0: A \times B \to A$ and $\pi_1: A \times B \to B$ and pairing operation $\langle -, - \rangle$, so the product of maps is $f \times g = \langle f \circ \pi_0, g \circ \pi_1 \rangle$, and the chosen terminal object as $\top$ with unique terminal maps $!_A: A \to \top$.

\section{Cartesian Differential Categories}\label{CDCsec}

In this section, we very briefly review Cartesian differential categories, so that the reader may compare Cartesian differential categories with the new notion of Cartesian \emph{difference} categories which we introduce in Section~\ref{CdCsec}. For a full detailed introduction on Cartesian differential categories, we refer the reader to the original paper~\cite{blute2009cartesian}.

\subsection{Cartesian Left Additive Categories}

Here we recall the definition of Cartesian left additive categories~\cite{blute2009cartesian}, which is the underlying structure of both Cartesian differential categories and Cartesian difference categories. Here ``left additive'' is meant being skew enriched~\cite{Campbell2018} over commutative monoids, so in particular, this means that we do not assume the existence of additive inverses, i.e., ``negative elements''. This left additive structure allows one to sum maps and have zero maps while allowing for maps that do not preserve the sum or zero. Maps that do preserve the additive structure are called additive maps.

\begin{defiC}[{\cite[Definition 1.1.1]{blute2009cartesian}}] A \textbf{left additive category} is a category $\mathbb{X}$ such that each hom-set $\mathbb{X}(A,B)$ is a commutative monoid with addition operation $+: \mathbb{X}(A,B) \times \mathbb{X}(A,B) \to \mathbb{X}(A,B)$ and zero element (called the zero map) $0 \in \mathbb{X}(A,B)$, such that pre-composition preserves the additive structure:
\begin{align*}
(f + g) \circ h = f \circ h + g \circ h && 0 \circ f = 0
\end{align*}
A map $k$ is \textbf{additive} if post-composition by $k$ preserves the additive structure:
\begin{align*}
k \circ (f +g) = k \circ f + k \circ g && k \circ 0=0
\end{align*}
\end{defiC}

\begin{defiC}[{\cite[Definition 2.3]{lemay2018tangent}}]\label{CLACdef} A \textbf{Cartesian left additive category} is a Cartesian category $\mathbb{X}$ which is also a left additive category such all projection maps $\pi_0: A \times B \to A$ and $\pi_1: A \times B \to B$ are additive.
\end{defiC}

Examples of Cartesian left additive categories can be found in Section~\ref{EXsec}. We note that the definition given here of a Cartesian left additive category is slightly different from the one found in~\cite[Definition 1.2.1]{blute2009cartesian}, which also assumed that the pairing of additive maps be additive, but it is indeed equivalent as explained in~\cite{lemay2018tangent}. Furthermore, by~\cite[Proposition 1.2.2]{blute2009cartesian}, an equivalent axiomatization of a Cartesian left additive category is that of a Cartesian category where every object comes equipped with a commutative monoid structure such that the projection maps are monoid morphisms, which we discuss further in Lemma~\ref{caelem}. Here are some useful identities on how the additive and product structures are compatible:

\begin{lemC}[{\cite[Lemma 2.4]{lemay2018tangent}}]\label{claclemma} In a Cartesian left additive category $\mathbb{X}$:
\begin{enumerate}[\emph{(i)}]
\item $\langle f ,g \rangle + \langle h, k \rangle = \langle f + h, g+k \rangle$ and $\langle 0, 0 \rangle = 0$;
\item If $f: C \to A$ and $g: C \to B$ are additive then $\langle f,g \rangle: C \to A \times B$ is additive;
\item If $h: A \to B$ and $k: C \to D$ are additive then $h \times k: A \times C \to B \times D$ is additive;
\item For any object $A$, the unique map to the terminal object $\top$ is the zero map, $!_A = 0$.
\end{enumerate}
\end{lemC}

\subsection{Cartesian Differential Categories}

Cartesian differential categories are Cartesian left additive categories that come equipped with a differential combinator, which in turn is axiomatized by the basic properties of the directional derivative from multivariable differential calculus. In the following definition, it is important to note that here, following the more recent work on Cartesian differential categories, we've flipped the convention found in~\cite{blute2009cartesian}, so that the linear argument is in the second argument rather than in the first argument.

\begin{defiC}[{\cite[Definition 2.1.1]{blute2009cartesian}}]\label{cartdiffdef} A \textbf{Cartesian differential category} is a Cartesian left additive category equipped with a \textbf{differential combinator} $\mathsf{D}$ of the form
\[ \frac{f : A \to B}{\mathsf{D}[f]: A \times A \to B} \]
 verifying the following coherence conditions:
 \begin{enumerate}[{\CDCax{\arabic*}},ref={\CDCax{\arabic*}},align=left]
\item\label{CDCax1} $\mathsf{D}[f+g] = \mathsf{D}[f] + \mathsf{D}[g]$ and $\mathsf{D}[0]=0$
\item\label{CDCax2} $\mathsf{D}[f] \circ \langle x, y+z \rangle= \mathsf{D}[f] \circ \langle x, y \rangle + \mathsf{D}[f] \circ \langle x, z \rangle$ and $\mathsf{D}[f] \circ \langle x, 0 \rangle=0$
\item\label{CDCax3} $\mathsf{D}[1_A]=\pi_1$ and $\mathsf{D}[\pi_0] = \pi_0 \circ \pi_1$ and $\mathsf{D}[\pi_1] = \pi_1 \circ \pi_1$
\item\label{CDCax4} $\mathsf{D}[\langle f, g \rangle] = \langle  \mathsf{D}[f], \mathsf{D}[g] \rangle$ and $\mathsf{D}[!_A]=!_{A \times A}$
\item\label{CDCax5} $\mathsf{D}[g \circ f] = \mathsf{D}[g] \circ \langle f \circ \pi_0, \mathsf{D}[f] \rangle$
\item\label{CDCax6} $\mathsf{D}\left[\mathsf{D}[f] \right] \circ \left \langle \langle x, y \rangle, \langle 0, z \rangle \right \rangle=  \mathsf{D}[f] \circ \langle x, z \rangle$
\item\label{CDCax7} $\mathsf{D}\left[\mathsf{D}[f] \right] \circ \left \langle \langle x, y \rangle, \langle z, 0 \rangle \right \rangle = \mathsf{D}\left[\mathsf{D}[f] \right] \circ \left \langle \langle x, z \rangle, \langle y, 0 \rangle \right \rangle$
\end{enumerate}
\end{defiC}

\noindent
As we will see in Section~\ref{CDCisCdCsec}, a Cartesian difference category is a generalization of a Cartesian differential category. Therefore, we leave the discussion of the intuition of these axioms for later in Section~\ref{CdCsec} below. We also refer to~\cite[Section 4]{blute2009cartesian} for a term calculus which may help better understand the axioms of a Cartesian differential category. We highlight that the last two axioms~\ref{CDCax6} and~\ref{CDCax7} can equivalently be expressed as follows:

\begin{lemC}[{\cite[Proposition 4.2]{cockett2014differential}}] In the presence of the other axioms,~\ref{CDCax6} and~\ref{CDCax7} are equivalent to the following two axioms:
  \begin{enumerate}[\CDCax{\arabic*.{a}},ref={\CDCax{\arabic*.{a}}},align=left]
 \setcounter{enumi}{5}
\item\label{CDCax6a} $\mathsf{D}\left[\mathsf{D}[f] \right] \circ \left \langle \langle x, 0 \rangle, \langle 0, y \rangle \right \rangle= \mathsf{D}[f] \circ  \langle x, y \rangle$
\item\label{CDCax7a} $\mathsf{D}\left[\mathsf{D}[f] \right] \circ \left \langle \langle x, y \rangle, \langle z, w \rangle \right \rangle= \mathsf{D}\left[\mathsf{D}[f] \right] \circ \left \langle \langle x, z \rangle, \langle y, w \rangle \right \rangle$
\end{enumerate}
\end{lemC}

\noindent
The canonical example of a Cartesian differential category is the category of real smooth functions, which we will discuss in Section~\ref{smoothex}. Other interesting examples of Cartesian differential categories can be found throughout the literature such as categorical models of the differential $\lambda$-calculus~\cite{ehrhard2003differential,manzonetto2012categorical}, the subcategory of differential objects of a tangent category~\cite{cockett2014differential}, and the coKleisli category of a differential category~\cite{blute2006differential,blute2009cartesian}.

\section{Change Action Models}\label{CAsec}

Change actions~\cite{alvarez2019fixing,alvarez2019change} have recently been proposed as a setting for reasoning about higher-order incremental computation, based on a discrete notion of differentiation. Together with Cartesian differential categories, they provide the core ideas behind Cartesian difference categories. In this section, we quickly review change actions and change action models, in particular, to highlight where some of the axioms of a Cartesian difference category come from. For more details on change actions, we invite readers to see the original paper~\cite{alvarez2019change}.

\subsection{Change Actions}
The basic intuition for a change action is an object $A$ equipped with a monoid $\Delta A$ which represents the possible ``changes'' or ``updates'' that might be applied to $A$, with the monoid structure on $\Delta A$ representing the capability to compose updates.

\begin{defiC}[{\cite[Section 2]{alvarez2019change}}]
    A \textbf{change action} $\cact{A}$ in a Cartesian category $\mathbb{X}$ is a quintuple:
    \[\cact{A} \equiv (A, \Delta A, \oplus_A, +_A, 0_A)\]
    consisting of two objects $A$ and $\Delta A$, and three maps:
    \begin{align*}
     \oplus_A: A \times \Delta A \to A && +_A: \Delta A \times \Delta A \to \Delta A && 0_A: \top \to \Delta A
    \end{align*}
such that:
  \begin{enumerate}[{\CAax{\arabic*}},ref={\CAax{\arabic*}},align=left]
        \item\label{CAax1} $(\Delta A, +_A, 0_A)$ is a monoid, that is, the following equalities hold:
\[+_A \circ \langle 1_{\Delta A}, 0_{\Delta A} \circ{} !_{\Delta A} \rangle = 1_{\Delta A} = +_{\Delta A} \circ \langle 0_{\Delta A} \circ{} !_{\Delta_A}, 1_{\Delta A} \rangle\]
\begin{align*}
+_A \circ (1_{\Delta A} \times +_A) = +_A \circ \pair{+_A \circ (1_{\Delta A} \times \pi_0)}{\pi_1 \circ \pi_1}
\end{align*}
%\begin{align*}
%+_A \circ (1_{\Delta A} \times +_A) = +_A \circ (+_A \times 1_{\Delta A})
%\end{align*}
  \item\label{CAax2} $\oplus_A : A \times \Delta A \to A$ is an action of $\Delta A$ on $A$, that is, the following equalities hold:
  \begin{align*}
\oplus_A \circ \langle 1_A, 0_A \circ{} !_A \rangle = 1_A
\end{align*}
\begin{align*}
\oplus_A \circ (1_A \times +_A) = \oplus_A \circ \pair{\oplus_A \circ (1_A \times \pi_0)}{\pi_1 \circ \pi_1}
\end{align*}
\end{enumerate}
\end{defiC}

\noindent
In~\ref{CAax1}, the first line says that $0_A$ is a unit for $+_A$ while the second line is the associativity of $+_A$. In~\ref{CAax2}, the first line is the unit law of the module action, while the second line is the associativity law of the module action. These are highlighted in Lemma~\ref{lem:CAadd} below. If $\cact{A} \equiv (A, \Delta A, \oplus_A, +_A, 0_A)$ is a change action, then for a pair of parallel maps $h: C \to \Delta A$ and $k: C \to \Delta A$, we define their $\Delta A$-sum $h +_{\cact{A}} k: C \to \Delta A$ as follows:
\[ h +_{\cact{A}} k = +_A \circ \langle h, k \rangle\]
Similarly, for a pair of maps $f: C \to A$ and $g: C \to \Delta A$, we define $f \oplus_{\cact{A}} g: C \to A$ as:
\[f \oplus_{\cact{A}} g = \oplus_A \circ \langle f, g \rangle \]
Here, $f \oplus_{\cact{A}} g$ should be thought of as $g$ acting on $f$.

\begin{lem}\label{lem:CAadd}
In a Cartesian category $\mathbb{X}$, let $\cact{A} \equiv (A, \Delta A, \oplus_A, +_A, 0_A)$ be a change action. Then for any suitable maps:
\begin{enumerate}[(\roman{enumi}),ref={\thelem.\roman{enumi}}]
\item $f+_{\cact{A}} (0_A \circ{} !_C) = f = (0_A \circ{} !_C) +_{\cact{A}} f$
\item $f +_{\cact{A}} (g +_{\cact{A}} h) = (f +_{\cact{A}} g) +_{\cact{A}} h$
\item\label{lem:CAadd.0} $f \oplus_{\cact{A}} (0_A \circ{} !_C) = f$
\item\label{lem:CAadd.+}  $f\oplus_{\cact{A}} (g +_{\cact{A}} h) = (f \oplus_{\cact{A}} g) \oplus_{\cact{A}} h$
\item\label{lem:CAadd.circ} $(f \oplus_{\cact{A}} g) \circ h = (f \circ h) \oplus_{\cact{A}} (g \circ h)$
\end{enumerate}
\end{lem}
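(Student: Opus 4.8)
The plan is to observe that each of the five identities is simply the ``generalized element'' reformulation of one of the monoid or action axioms \CAax{1} and \CAax{2}, obtained by precomposing the point-free axiom with an appropriate tuple built from $f$, $g$, and $h$. The two facts that make every such translation work are the naturality of the product structure, namely that $\langle a, b \rangle \circ c = \langle a \circ c, b \circ c \rangle$ and $(a \times b) \circ \langle c, d \rangle = \langle a \circ c, b \circ d \rangle$, together with the universal property of the terminal object, which gives $!_X \circ f = {}!_C$ for any $f: C \to X$.

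First I would dispatch the distributivity law (v): unfolding the definition of $\oplus_{\cact{A}}$ and using that pairing distributes over precomposition gives $(f \oplus_{\cact{A}} g) \circ h = \oplus_A \circ \langle f, g \rangle \circ h = \oplus_A \circ \langle f \circ h, g \circ h \rangle = (f \circ h) \oplus_{\cact{A}} (g \circ h)$ immediately, with no appeal to the axioms. The unit laws (i) and (iii) follow the same pattern: for (iii), I unfold to $\oplus_A \circ \langle f, 0_A \circ{} !_C \rangle$, recognize that $\langle f, 0_A \circ{} !_C \rangle = \langle 1_A, 0_A \circ{} !_A \rangle \circ f$ (using $!_A \circ f = {}!_C$), and conclude via the unit law of \CAax{2} that this equals $1_A \circ f = f$. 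The two halves of (i) are identical but use the left and right unit clauses of the monoid axiom \CAax{1}, each precomposed with $f$.

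The associativity laws (ii) and (iv) are where the bookkeeping lives, so they are the main --- though still mechanical --- obstacle. For (iv) I would precompose the associativity clause of \CAax{2}, namely $\oplus_A \circ (1_A \times +_A) = \oplus_A \circ \langle \oplus_A \circ (1_A \times \pi_0), \pi_1 \circ \pi_1 \rangle$, with the tuple $k = \langle f, \langle g, h \rangle \rangle : C \to A \times (\Delta A \times \Delta A)$. Unwinding the left-hand side with the product and pairing laws yields $\oplus_A \circ \langle f, +_A \circ \langle g, h \rangle \rangle = f \oplus_{\cact{A}} (g +_{\cact{A}} h)$, while the right-hand side yields $\oplus_A \circ \langle \oplus_A \circ \langle f, g \rangle, h \rangle = (f \oplus_{\cact{A}} g) \oplus_{\cact{A}} h$; the only care needed is to track that $\pi_0$ and $\pi_1 \circ \pi_1$ pick out $g$ and $h$ respectively inside the nested pair. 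Identity (ii) is proved identically, replacing the action axiom by the associativity clause of the monoid axiom \CAax{1} and precomposing with the same shape of tuple $\langle f, \langle g, h \rangle \rangle$. Since each step only reshuffles products and invokes a single axiom, I expect no conceptual difficulty, only the need to keep the nested tupling straight.
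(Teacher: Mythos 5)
Your proposal is correct and follows exactly the route the paper indicates: the paper leaves this lemma as an exercise, sketching that (i) and (ii) follow from \CAax{1}, (iii) and (iv) from \CAax{2}, and (v) directly from the definition, which is precisely your assignment of axioms to clauses. Your detailed precomposition argument (pairing the point-free axioms with tuples such as $\langle f, \langle g, h \rangle \rangle$ and using $!_A \circ f = {}!_C$ for the unit laws) is a sound and complete fleshing-out of that sketch.
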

\begin{proof}We leave these as a simple exercises for the reader. Briefly, $(i)$ and $(ii)$ will follow from~\ref{CAax1}, while $(iii)$ and $(iv)$ will follow from~\ref{CAax2}, and $(v)$ follows directly from the definition.
\end{proof}

Change actions give rise to a notion of derivative, with a distinctly ``discrete'' flavour. Given change actions structure on $A$ and $B$, a map $f : A \to B$ can be said to be differentiable when changes to the input (in the sense of elements of $\Delta A$) are mapped to changes to the output (that is, elements of $\Delta B$). In the setting of incremental computation~\cite{alvarez2019fixing,cai2014theory,kelly2016evolving}, this is precisely what it means for $f$ to be incrementalizable, with the derivative of $f$ corresponding to an incremental version of $f$.

\begin{defiC}[{\cite[Definition 1]{alvarez2019change}}]\label{def:derivative}
    Let
    \[\cact{A} \!\equiv\! (A, \Delta A, \oplus_A, +_A, 0_A) \qquad\text{and}\qquad \cact{B} \!\equiv\! (B, \Delta B, \oplus_B, +_B, 0_B)\]
    be change actions in a Cartesian category $\mathbb{X}$. For a map $f: A \to B$, a map of type $\dd[f] : A \times \Delta A \to \Delta B$ is a \textbf{derivative} of $f$ whenever the following equalities hold:
 \begin{enumerate}[{\CADax{\arabic*}},ref={\CADax{\arabic*}},align=left]
        \item\label{CADax1} $f \circ (x \oplus_{\cact{A}} y) = (f \circ x) \oplus_{\cact{B}} \left(\dd[f] \circ \pair{x}{y}\right)$
        \item\label{CADax2} $\dd[f] \circ \pair{x}{y +_{\cact{A}} z} = (\dd[f] \circ \pair{x}{y}) +_{\cact{B}} (\dd[f] \circ \pair{x \oplus_{\cact{A}} y}{z})$ and \\ \noindent$\dd[f] \circ \langle x, 0_B \circ{} !_B \rangle = 0_B \circ{} !_{A \times \Delta A}$
    \end{enumerate}
\end{defiC}

\noindent
The second axiom~\ref{CADax2} is also known as regularity~\cite[Definition 2]{alvarez2019change}. The intuition for these axioms will be explained in more detail in Section~\ref{CdCsec} when we explain the axioms of a Cartesian difference category. It is important to note that there is nothing in the definition that says that derivatives are necessarily unique, and therefore a map $f$ could have multiple possible derivatives.

\subsection{Change Action Models}

The chain rule for derivatives still holds for change actions, or in other words, differentiation is compositional. Indeed by~\cite[Lemma 1]{alvarez2019change}, whenever $\dd[f]$ and $\dd[g]$ are derivatives for composable maps $f$ and $g$ respectively, then $\dd[g] \circ \langle f \circ \pi_0, \dd[f] \rangle$ is a derivative for $g \circ f$. As a corollary, change actions in $\mathbb{X}$ form a category where the maps are pairs consisting a map $f$ and a derivative of $f$. For more details on this category of change actions, see~\cite[Section 3]{alvarez2019change}.

\begin{defiC}[{\cite[Section 3]{alvarez2019change}}]
    Given a Cartesian category $\mathbb{X}$, define its change actions category $\mathsf{CAct}(\mathbb{X})$  as the category
    whose objects are change actions $\cact{A}$ in $\mathbb{X}$ and whose maps $\cact{f} : \cact{A} \to
    \cact{B}$ are the pairs $(f, \dd[f])$, where $f : A \to B$ is a map in $\mathbb{X}$ and $\dd[f] :
    A \times \Delta A \to \Delta B$ is a derivative for $f$. The identity is $(1_A, \pi_1)$, while composition of $(f, \dd[f])$ and $(g, \dd[g])$ is $(g \circ f, \dd[g] \circ \langle f \circ \pi_0, \dd[f] \rangle)$.
\end{defiC}

For a Cartesian category $\mathbb{X}$, it is straightforward to see that $\mathsf{CAct}(\mathbb{X})$ is also a Cartesian category where the terminal object and the product of objects is the same as in $\mathbb{X}$, where the projection maps are the pairs $(\pi_i, \pi_i \circ \pi_1)$, and where the pairing of maps is given point-wise $\pair{(f, \dd[f])}{(g, \dd[g])} = (\pair{f}{g}, \pair{\dd[f]}{\dd[g]})$. There is an obvious product-preserving forgetful functor $\mathcal{E} : \mathsf{CAct}(\mathbb{X}) \to \mathbb{X}$ sending every change action $(A, \Delta A, \oplus, +, 0)$ to its base object $A$ and every map $(f, \dd[f])$ to the underlying map $f$. Here is a useful lemma which describes the compatibility between change action structure and the product structure.

\begin{lem}\label{lem:CACTprod} Let $\cact{A} \!\equiv\! (A, \Delta A, \oplus_A, +_A, 0_A)$ and $\cact{B} \!\equiv\! (B, \Delta B, \oplus_B, +_B, 0_B)$ be change actions in a Cartesian category $\mathbb{X}$. Then for any suitable maps:
\begin{enumerate}[(\roman{enumi}),ref={\thelem.\roman{enumi}}]
\item\label{lem:CACTprod.oplus} $\pair{f}{g} \oplus_{\cact{A \times B}} \pair{h}{k} = \pair{f \oplus_{\cact{A}} h}{ g \oplus_{\cact{B}} k}$
\item\label{lem:CACTprod.plus} $\pair{f}{g} +_{\cact{A \times B}} \pair{h}{k} = \pair{f +_{\cact{A}} h}{ g +_{\cact{B}} k}$
\item\label{lem:CACTprod.0} $\pair{0_A}{0_B} = 0_{A \times B}$
\end{enumerate}
\end{lem}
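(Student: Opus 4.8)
The plan is to first make explicit the change action structure carried by the product $\cact{A \times B}$. Since $\mathsf{CAct}(\mathbb{X})$ is Cartesian with products and terminal object computed as in $\mathbb{X}$, the product change action has base object $A \times B$, change object $\Delta A \times \Delta B$, and its three structure maps are the ``interleaved'' versions of those of $\cact{A}$ and $\cact{B}$. Concretely, writing the interchange isomorphism $\mathsf{ex}: (A \times B) \times (\Delta A \times \Delta B) \to (A \times \Delta A) \times (B \times \Delta B)$, we have $\oplus_{A \times B} = (\oplus_A \times \oplus_B) \circ \mathsf{ex}$; analogously $+_{A \times B} = (+_A \times +_B) \circ \mathsf{ex}'$ for the corresponding interchange $\mathsf{ex}'$ on the change objects; and $0_{A \times B} = \pair{0_A}{0_B}$. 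I would record these formulas at the outset, since every part of the lemma is a direct consequence of them.

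With these in hand, part (iii) is immediate: by the product construction $0_{A \times B}$ is by definition $\pair{0_A}{0_B}$, so nothing remains to be shown.

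For part (i), I would unfold the definition of the action, namely $\pair{f}{g} \oplus_{\cact{A \times B}} \pair{h}{k} = \oplus_{A \times B} \circ \pair{\pair{f}{g}}{\pair{h}{k}}$, and substitute $\oplus_{A \times B} = (\oplus_A \times \oplus_B) \circ \mathsf{ex}$. The only real work is to simplify $\mathsf{ex} \circ \pair{\pair{f}{g}}{\pair{h}{k}}$; by the universal property of the product (a map into a product is determined by its projections, with $\pi_i \circ \pair{u}{v}$ equal to the $i$-th component) this composite equals $\pair{\pair{f}{h}}{\pair{g}{k}}$. Postcomposing with $\oplus_A \times \oplus_B$ then yields $\pair{\oplus_A \circ \pair{f}{h}}{\oplus_B \circ \pair{g}{k}} = \pair{f \oplus_{\cact{A}} h}{g \oplus_{\cact{B}} k}$, which is the desired right-hand side. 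Part (ii) is proved identically, replacing $\oplus$ by $+$ throughout and $\mathsf{ex}$ by $\mathsf{ex}'$.

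The computations here are entirely routine manipulations with products, so I do not expect any genuine obstacle. The only thing to be careful about is the bookkeeping of projections and pairings inside the interchange maps, i.e.\ pinning down the explicit description of the product change action's structure maps correctly at the start. Once those formulas are fixed, each equality falls out by a single application of the product equations $\pi_i \circ \pair{-}{-} = (\cdot)$ and $\pair{\pi_0 \circ w}{\pi_1 \circ w} = w$.
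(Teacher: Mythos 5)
Your proposal is correct and follows essentially the same route as the paper, which simply declares all three identities ``immediate from the definition of product structure in $\mathsf{CAct}(\mathbb{X})$'' and leaves the details as an exercise; you have merely filled in that exercise, correctly identifying the product change action as $(A \times B, \Delta A \times \Delta B)$ with interleaved structure maps and $0_{A \times B} = \pair{0_A}{0_B}$, after which the computations are the routine pairing/projection manipulations you describe.
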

\begin{proof} These are immediate from the definition of product structure in $\mathsf{CAct}(\mathbb{X})$, and so we leave this as excercise for the reader.
\end{proof}

As a setting for studying differentiation, the category $\mathsf{CAct}(\mathbb{X})$ is rather lacklustre, since there is no notion of higher derivatives. Instead, one works with change action models. Informally, a change action model consists of a rule which for every object $A$ of $\mathbb{X}$ associates a change action over it, and for every map a choice of a derivative.

\begin{defiC}[{\cite[Definition 5]{alvarez2019change}}]
    A \textbf{change action model} is a Cartesian category $\mathbb{X}$ with a product-preserving functor
    $\alpha : \mathbb{X} \to \mathsf{CAct}(\mathbb{X})$ that is a section of the forgetful
    functor $\mathcal{E}$, that is, $\mathcal{E} \circ \alpha = 1_{\mathbb{X}}$.
\end{defiC}

For brevity, when $A$ is an object of a change action model, we will simply write its associate change action as $\alpha(A) = (A, \Delta A, \oplus_A, +_A)$. Examples of change action models can be found in~\cite{alvarez2019change}. In particular, as was shown in~\cite[Theorem 6]{alvarez2019change}, every Cartesian differential category provides a change model action. We will generalize this result, and show in Section~\ref{CdCisCAsec} that a Cartesian difference category also always provides a change action model.

\section{Cartesian Difference Categories}\label{sec:CdC}

In this section, we introduce \emph{Cartesian difference categories}, which are generalizations of Cartesian differential categories. Examples of Cartesian difference categories can be found in Section~\ref{EXsec}.

\subsection{Infinitesimal Extensions in Left Additive Categories}

We first introduce infinitesimal extensions, which is an operator that turns a map into an ``infinitesimal'' version of itself, in the sense that every map coincides with its Taylor approximation on infinitesimal elements.

 \begin{defi} A Cartesian left additive category $\mathbb{X}$ is said to have an \textbf{infinitesimal extension} $\varepsilon$ if every pair of objects $A$ and $B$, there is a function $\varepsilon: \mathbb{X}(A,B) \to \mathbb{X}(A,B)$ such that:
  \begin{enumerate}[{\Eax{\arabic*}},ref={\Eax{\arabic*}},align=left]
        \item\label{Eax1} $\varepsilon$ is a monoid morphism, that is, $\varepsilon(f+g) = \varepsilon(f) + \varepsilon(g)$ and $\varepsilon(0)=0$
         \item\label{Eax2} $\varepsilon(g \circ f) = \varepsilon(g) \circ f$
          \item\label{Eax3} $\varepsilon(\pi_0) = \pi_0 \circ \varepsilon(1_{A\times B})$ and $ \varepsilon(\pi_1) = \pi_1 \circ \varepsilon(1_{A\times B})$
    \end{enumerate}
\end{defi}

\noindent
By~\ref{Eax1}, $\varepsilon(1_A)$ is an additive map, while by~\ref{Eax2}, it follows that $\varepsilon(f) = \varepsilon(1_B) \circ f$. In light of this, it turns out that infinitesimal extensions can equivalently be described as a class of additive maps $\varepsilon_A: A \to A$.  The equivalence is given by setting $\varepsilon(f) = \varepsilon_B \circ f$ and $\varepsilon_A = \varepsilon(1_A)$.

\begin{lem}\label{lem:epbij} For a Cartesian left additive category $\mathbb{X}$, the following are in bijective correspondence:
\begin{enumerate}[(\roman{enumi}.),ref={\thelem.\roman{enumi}}]
\item An infinitesimal extension $\varepsilon$ on $\mathbb{X}$;
\item A family of maps $\varepsilon_A: A \to A$ indexed by objects $A$, such that $\varepsilon_A$ is additive and $\varepsilon_{A \times B}=\varepsilon_A \times \varepsilon_B$.
\end{enumerate}
Therefore, for any map $f: A \to B$, $\varepsilon(f) = \varepsilon_A \circ f$.
\end{lem}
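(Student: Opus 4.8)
The plan is to exhibit the two maps of the claimed correspondence explicitly and then check that each is well-defined and that they are mutually inverse. In the direction (i) $\to$ (ii), I would send an infinitesimal extension $\varepsilon$ to the family defined by $\varepsilon_A = \varepsilon(1_A)$. In the direction (ii) $\to$ (i), I would send a family $(\varepsilon_A)_A$ to the operator that acts on a map $f : A \to B$ by $\varepsilon(f) = \varepsilon_B \circ f$. The whole argument rests on the identity $\varepsilon(f) = \varepsilon_B \circ f$ holding for every $f : A \to B$, which is exactly \ref{Eax2} applied with $g = 1_B$ (as already noted in the discussion preceding the lemma); I would record this first, since it is the workhorse in both directions.

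For well-definedness of (i) $\to$ (ii), I must check that each $\varepsilon_A = \varepsilon(1_A)$ is additive and that $\varepsilon_{A \times B} = \varepsilon_A \times \varepsilon_B$. Additivity is immediate from the identity above together with \ref{Eax1}: for $f, g : C \to A$ one has $\varepsilon_A \circ (f + g) = \varepsilon(f+g) = \varepsilon(f) + \varepsilon(g) = \varepsilon_A \circ f + \varepsilon_A \circ g$, and likewise $\varepsilon_A \circ 0 = \varepsilon(0) = 0$. For the product condition I would invoke the universal property of $A \times B$: composing $\varepsilon_{A \times B}$ with $\pi_0$ gives, by \ref{Eax3} and then the identity above, $\pi_0 \circ \varepsilon(1_{A \times B}) = \varepsilon(\pi_0) = \varepsilon_A \circ \pi_0 = \pi_0 \circ (\varepsilon_A \times \varepsilon_B)$, and symmetrically for $\pi_1$, so the two maps into the product coincide.

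For well-definedness of (ii) $\to$ (i), I must verify \ref{Eax1}, \ref{Eax2}, and \ref{Eax3} for the operator $\varepsilon(f) = \varepsilon_B \circ f$. Axiom \ref{Eax1} is precisely the additivity of each $\varepsilon_B$; axiom \ref{Eax2} is just associativity of composition, since $\varepsilon(g \circ f) = \varepsilon_C \circ (g \circ f) = (\varepsilon_C \circ g) \circ f = \varepsilon(g)\circ f$; and axiom \ref{Eax3} follows from $\varepsilon_{A\times B} = \varepsilon_A \times \varepsilon_B$, because $\pi_0 \circ \varepsilon(1_{A\times B}) = \pi_0 \circ (\varepsilon_A \times \varepsilon_B) = \varepsilon_A \circ \pi_0 = \varepsilon(\pi_0)$, and similarly for $\pi_1$.

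Finally, the two constructions are mutually inverse. Starting from $\varepsilon$ and forming $\varepsilon_A = \varepsilon(1_A)$, the reconstructed operator sends $f : A \to B$ to $\varepsilon_B \circ f = \varepsilon(1_B) \circ f = \varepsilon(f)$ by the identity above, recovering $\varepsilon$; starting from a family and forming $\varepsilon(f) = \varepsilon_B \circ f$, the reconstructed family is $\varepsilon(1_A) = \varepsilon_A \circ 1_A = \varepsilon_A$, recovering the original family. The concluding formula $\varepsilon(f) = \varepsilon_B \circ f$ is then simply the definition of the (ii) $\to$ (i) map. I expect no genuine obstacle: every step is forced by the axioms, and the only one requiring a moment's thought is the product condition, where the universal property of $A \times B$ must be used to upgrade the projection-wise equalities coming from \ref{Eax3} into the full equality $\varepsilon_{A\times B} = \varepsilon_A \times \varepsilon_B$.
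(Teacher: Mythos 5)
Your proposal is correct and takes essentially the same route as the paper's own proof: the same two constructions $\varepsilon_A = \varepsilon(1_A)$ and $\varepsilon(f) = \varepsilon_B \circ f$, the same verification of \Eax{1}--\Eax{3} via the identity $\varepsilon(f) = \varepsilon(1_B)\circ f$, the same use of the universal property of the product to upgrade the projection-wise equalities from \Eax{3} to $\varepsilon_{A\times B} = \varepsilon_A \times \varepsilon_B$, and the same check that the constructions are mutually inverse. Incidentally, your final formula $\varepsilon(f) = \varepsilon_B \circ f$ is the correct one; the lemma statement's ``$\varepsilon(f) = \varepsilon_A \circ f$'' is a typo, since for $f : A \to B$ the extension must postcompose with $\varepsilon_B$.
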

\begin{proof} Let $\varepsilon$ be an infinitesimal extension. Then for each object $A$, define $\varepsilon_A: A \to A$ as $\varepsilon_A = \varepsilon(1_A)$. Since $\varepsilon$ preserves the additive structure and $\varepsilon(g \circ f) = \varepsilon(g) \circ f$, it follows that:
\begin{align*}
\varepsilon_A \circ (f + g) &=~ \varepsilon(1_A) \circ (f + g) \\
&=~ \varepsilon\left(1_A \circ (f+g) \right) \tag*{\ref{Eax2}} \\
&=~ \varepsilon(f+g) \\
&=~\varepsilon(f) + \varepsilon(g) \tag*{\ref{Eax1}}\\
&=~\varepsilon\left(1_A \circ f \right) + \varepsilon\left(1_A \circ g \right) \\
&=~\left( \varepsilon(1_A) \circ f \right) \circ \left(\varepsilon(1_A) \circ g \right) \tag*{\ref{Eax2}} \\
&=~\left( \varepsilon_A \circ f \right) + \left( \varepsilon_A \circ g \right) \\\\
\varepsilon_A \circ 0 &=~\varepsilon(1_A) \circ 0 \\
&=~ \varepsilon\left(1_A \circ 0 \right) \tag*{\ref{Eax2}} \\
&=~\varepsilon(0) \\
&=~ 0 \tag*{\ref{Eax1}}
\end{align*}
So therefore, $\varepsilon_A$ is additive. Now for a pair of objects $A$ and $B$, we have the following:
\begin{align*}
\pi_0 \circ \varepsilon_{A \times B} &=~\pi_0 \circ \varepsilon(1_{A\times B})\\
&=~ \varepsilon(\pi_0) \tag*{\ref{Eax3}} \\
&=~\varepsilon(1_A \circ \pi_0) \\
&=~ \varepsilon(1_A) \circ \pi_0 \tag*{\ref{Eax2}} \\
&=~ \varepsilon_A \circ \pi_0
\end{align*}
So $\pi_0 \circ \varepsilon_{A \times B} = \varepsilon_A \circ \pi_0$, and similarly, $\pi_1 \circ \varepsilon_{A \times B} = \varepsilon_B \circ \pi_1$. Then by the universal property of the product, it follows that $\varepsilon_{A \times B}=\varepsilon_A \times \varepsilon_B$.

Conversely, suppose that each object $A$ comes equipped with an additive map $\varepsilon_A: A \to A$ such that $\varepsilon_{A \times B}=\varepsilon_A \times \varepsilon_B$. Define $\varepsilon: \mathbb{X}(A,B) \to \mathbb{X}(A,B)$ as $\varepsilon(f) = \varepsilon_B \circ f$. Since $\varepsilon_B$ is additive, it follows that:
\begin{align*}
\varepsilon(f + g) &=~ \varepsilon_B \circ (f + g) =~ \left( \varepsilon_B \circ f \right) + \left( \varepsilon_B \circ g \right) =~ \varepsilon(f) + \varepsilon(g) \end{align*}
\begin{align*}
\varepsilon(0) &=~\varepsilon_B \circ 0 =~0
\end{align*}
So $\varepsilon$ is a monoid morphism, and so~\ref{Eax1} holds. Next, it is straightforward by definition that $\varepsilon(g \circ f) = \varepsilon(g) \circ f$, so~\ref{Eax2} holds. So it remains to show that $\varepsilon$ is compatible with the projections. Note that $\varepsilon_{A \times B}=\varepsilon_A \times \varepsilon_B$ implies that $\pi_0 \circ \varepsilon_{A \times B} = \varepsilon_A \circ \pi_0$ and $\pi_1 \circ \varepsilon_{A \times B} = \varepsilon_B \circ \pi_1$. Therefore, we have that:
\begin{align*}
\varepsilon(\pi_0) &=~ \varepsilon_{A} \circ \pi_0 =~ \pi_0 \circ \varepsilon_{A \times B} \\
\varepsilon(\pi_1) &=~ \varepsilon_{B} \circ \pi_1 =~ \pi_1 \circ \varepsilon_{A \times B}
\end{align*}
So we have that~\ref{Eax3} holds. Therefore, $\varepsilon$ is an infinitesimal extension.

Lastly, we need to show that these constructions are inverses of each other. Starting with an infinitesimal extension $\varepsilon$, we have that:
\begin{align*}
\varepsilon(f) &=~\varepsilon(1_B \circ f) =~ \varepsilon(1_B) \circ f =~\varepsilon_B \circ f
\end{align*}
While in the other direction, it is automatic that $\varepsilon(1_A) = \varepsilon_A$. So we conclude that infinitesimal extensions are bijective correspondence with a family of additive maps $\varepsilon_A: A \to A$ such that $\varepsilon_{A \times B}=\varepsilon_A \times \varepsilon_B$.
\end{proof}

As an immediate consequence of the previous lemma, it follows that infinitesimal extensions are compatible with the product structure.

\begin{lem}\label{lem:ep-pair} Let $\mathbb{X}$ be a Cartesian left additive category with an infinitesimal extension $\varepsilon$. Then $\varepsilon(\pair{f}{g}) = \pair{\varepsilon(f)}{\varepsilon(g)}$ and $\varepsilon(h \times k) = \varepsilon(h) \times \varepsilon(k)$.
\end{lem}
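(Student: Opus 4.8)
The plan is to derive both identities directly from Lemma~\ref{lem:epbij}, which is why this is phrased as an immediate consequence. Lemma~\ref{lem:epbij} gives us two facts that do all the work: first, that $\varepsilon$ acts by post-composition, $\varepsilon(f) = \varepsilon_B \circ f$ for any $f : A \to B$; and second, that the indexed family is multiplicative on products, $\varepsilon_{A \times B} = \varepsilon_A \times \varepsilon_B$. Everything else is a routine manipulation with the two standard product identities $(h \times k) \circ \pair{f}{g} = \pair{h \circ f}{k \circ g}$ and $(a \times b) \circ (c \times d) = (a \circ c) \times (b \circ d)$, both of which hold in any Cartesian category.

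For the first identity, I would take $f : C \to A$ and $g : C \to B$, so that $\pair{f}{g} : C \to A \times B$. Rewriting $\varepsilon$ as post-composition gives $\varepsilon(\pair{f}{g}) = \varepsilon_{A \times B} \circ \pair{f}{g}$, and then substituting $\varepsilon_{A \times B} = \varepsilon_A \times \varepsilon_B$ followed by the first product identity yields $(\varepsilon_A \times \varepsilon_B) \circ \pair{f}{g} = \pair{\varepsilon_A \circ f}{\varepsilon_B \circ g}$. Recognizing $\varepsilon_A \circ f = \varepsilon(f)$ and $\varepsilon_B \circ g = \varepsilon(g)$ closes the computation, giving $\varepsilon(\pair{f}{g}) = \pair{\varepsilon(f)}{\varepsilon(g)}$.

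For the second identity, I would proceed entirely analogously. Given $h : A \to B$ and $k : C \to D$, so that $h \times k : A \times C \to B \times D$, I would write $\varepsilon(h \times k) = \varepsilon_{B \times D} \circ (h \times k)$, substitute $\varepsilon_{B \times D} = \varepsilon_B \times \varepsilon_D$, and apply the second product identity to obtain $(\varepsilon_B \times \varepsilon_D) \circ (h \times k) = (\varepsilon_B \circ h) \times (\varepsilon_D \circ k) = \varepsilon(h) \times \varepsilon(k)$. Alternatively, one can deduce this from the first identity together with~\ref{Eax2}, by expanding $h \times k = \pair{h \circ \pi_0}{k \circ \pi_1}$ and using $\varepsilon(h \circ \pi_0) = \varepsilon(h) \circ \pi_0$.

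There is no genuine obstacle here: since Lemma~\ref{lem:epbij} has already converted $\varepsilon$ into post-composition by a product-compatible family, both statements reduce to the functoriality of the product bifunctor. The only point requiring any care is keeping the source and target objects of $f, g, h, k$ straight so that the correct instance of the family ($\varepsilon_{A \times B}$ versus $\varepsilon_{B \times D}$) is used in each case; this is bookkeeping rather than a real difficulty.
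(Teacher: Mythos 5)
Your proposal is correct and follows essentially the same route as the paper's proof: both identities are obtained by rewriting $\varepsilon$ as post-composition via Lemma~\ref{lem:epbij}, substituting $\varepsilon_{A \times B} = \varepsilon_A \times \varepsilon_B$, and applying the standard product identities. Your version is in fact slightly more careful than the paper's about the indexing of the family (distinguishing $\varepsilon_{A \times B}$ from $\varepsilon_{B \times D}$), but the argument is the same.
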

\begin{proof} We compute that:
\begin{align*}
\varepsilon(\pair{f}{g}) &=~ \varepsilon_{A \times B} \circ \pair{f}{g} \tag{Lemma~\ref{lem:epbij}} \\
&=~ (\varepsilon_A \times \varepsilon_B) \circ \pair{f}{g} \tag{Lemma~\ref{lem:epbij}} \\
&=~ \pair{\varepsilon_A \circ f}{\varepsilon_B \circ g} \\
&=~ \pair{\varepsilon(f)}{\varepsilon(g)}  \tag{Lemma~\ref{lem:epbij}}
\end{align*}
So $\varepsilon(\pair{f}{g}) = \pair{\varepsilon(f)}{\varepsilon(g)}$. By similar calculations, we also have that:
\begin{align*}
\varepsilon(h \times k) &=~ \varepsilon_{A \times B} \circ (h \times k)  \tag{Lemma~\ref{lem:epbij}} \\
&=~ (\varepsilon_A \times \varepsilon_B) \circ (h \times k) \tag{Lemma~\ref{lem:epbij}} \\
&=~ (\varepsilon_A \circ h) \times (\varepsilon_B \circ k) \\
&=~ \varepsilon(h) \times \varepsilon(k) \tag{Lemma~\ref{lem:epbij}}
\end{align*}
So $\varepsilon(h \times k) = \varepsilon(h) \times \varepsilon(k)$.
\end{proof}

Infinitesimal extensions equip each object with a canonical change action structure:

\begin{lem}\label{caelem} Let $\mathbb{X}$ be a Cartesian left additive category with infinitesimal extension $\varepsilon$. For every object $A$, define the three maps $\oplus_A: A \times A \to A$,  $+_A: A \times A \to A$, and $0_A: \top \to A$ respectively as follows:
\begin{align*}
\oplus_A = \pi_0 + \varepsilon(\pi_1) && +_A = \pi_0 + \pi_1 && 0_A = 0
\end{align*}
Then $(A, A, \oplus_A, +_A, 0_A)$ is a change action in $\mathbb{X}$.
\end{lem}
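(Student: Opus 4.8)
The plan is to verify the two change action axioms \ref{CAax1} and \ref{CAax2} directly, after first translating both structure maps into a convenient ``point-wise'' form. The key preliminary observation I would record is that for any pair of maps $f, g : C \to A$, the left additive structure together with \ref{Eax2} give
$+_A \circ \pair{f}{g} = f + g$ and $\oplus_A \circ \pair{f}{g} = f + \varepsilon(g)$,
since $\varepsilon(\pi_1) \circ \pair{f}{g} = \varepsilon(\pi_1 \circ \pair{f}{g}) = \varepsilon(g)$ by \ref{Eax2}, and $\pi_0 \circ \pair{f}{g} = f$. Rewriting every required equation in this form collapses all the diagrammatic identities into identities about $+$, $0$, and $\varepsilon$ within a single hom-set, which is exactly where the (routine) computations live.

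For \ref{CAax1} --- that $(A, +_A, 0_A)$ is a monoid --- I would note that this is precisely the canonical commutative monoid structure carried by every object of a Cartesian left additive category, as per \cite[Proposition 1.2.2]{blute2009cartesian}; but it is equally quick to check by hand. Using $0_A \circ {}!_A = 0$ (from the left additive axiom $0 \circ f = 0$), the unit law $+_A \circ \pair{1_A}{0_A \circ {}!_A} = 1_A$ becomes $1_A + 0 = 1_A$, and the symmetric equation follows the same way (commutativity of the hom-set monoid makes the two sides interchangeable). Expanding the projections in the associativity square $+_A \circ (1_A \times +_A) = +_A \circ \pair{+_A \circ (1_A \times \pi_0)}{\pi_1 \circ \pi_1}$ and applying the point-wise description reduces it to the associativity of $+$ on $\mathbb{X}(A \times A \times A, A)$.

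The substance of the argument is \ref{CAax2}. For the unit law $\oplus_A \circ \pair{1_A}{0_A \circ {}!_A} = 1_A$, the point-wise form yields $1_A + \varepsilon(0)$, and $\varepsilon(0) = 0$ by \ref{Eax1}, leaving $1_A$. For the associativity of the action I would unwind both sides over the domain $A \times (A \times A)$ with generic inputs $a, b, c$: the left-hand side $\oplus_A \circ (1_A \times +_A)$ computes to $a + \varepsilon(b + c)$, while the right-hand side $\oplus_A \circ \pair{\oplus_A \circ (1_A \times \pi_0)}{\pi_1 \circ \pi_1}$ computes to $(a + \varepsilon(b)) + \varepsilon(c)$. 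These agree precisely because $\varepsilon$ is additive, so that \ref{Eax1} gives $\varepsilon(b + c) = \varepsilon(b) + \varepsilon(c)$, combined with the associativity of $+$.

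I do not expect a genuine conceptual obstacle here; the difficulty is purely bookkeeping. The one place demanding care is the unwinding of the nested product $A \times (A \times A)$ and the composite projections $\pi_1 \circ \pi_1$, $1_A \times \pi_0$, and $1_A \times +_A$, where I must confirm that $\varepsilon$ can be pulled past each precomposition via \ref{Eax2} before the additive identities \ref{Eax1} are invoked. Once the point-wise translation is established, every remaining step is an application of associativity and commutativity of $+$ together with additivity of $\varepsilon$, and the verification closes immediately.
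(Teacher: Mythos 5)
Your proposal is correct and follows essentially the same route as the paper: cite \cite[Proposition 1.2.2]{blute2009cartesian} for \ref{CAax1}, then verify \ref{CAax2} by direct computation, pulling $\varepsilon$ through precompositions via \ref{Eax2} and finishing with additivity of $\varepsilon$ from \ref{Eax1}. Your preliminary point-wise reduction $\oplus_A \circ \pair{f}{g} = f + \varepsilon(g)$ and $+_A \circ \pair{f}{g} = f + g$ is exactly the observation the paper records immediately after the lemma, so this is a matter of presentation rather than a different argument.
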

\begin{proof} That $(A, +_A, 0_A)$ is a commutative monoid was shown in~\cite[Proposition 1.2.2]{blute2009cartesian}. Thus~\ref{CAax1} holds. It remains to show that $\oplus_A$ is an action, which follows directly from the fact that $\varepsilon$ preserves the additive structure. So we first that compute:
\begin{align*}
    \oplus_A \circ \langle 1_A, 0_A \circ{} !_A \rangle &=~\left( \pi_0 + \varepsilon(\pi_1) \right) \circ \langle 1_A, 0  \rangle \\
    &=~ \pi_0 \circ \langle 1_A, 0_A  \rangle + \varepsilon(\pi_1) \circ \langle 1_A, 0 \rangle \\
   &=~ \pi_0 \circ \langle 1_A, 0_A  \rangle + \varepsilon\left(\pi_1 \circ \langle 1_A, 0 \rangle \right) \tag*{\ref{Eax2}} \\
   &=~ 1_A + \varepsilon(0) \\
   &=~ 1_A + 0 \tag*{\ref{Eax1}} \\
   &=~ 1_A
   \end{align*}
So $  \oplus_A \circ \langle 1_A, 0_A \circ{} !_A \rangle = 1_A$. Next we compute that:
   \begin{align*}
&\oplus_A \circ \pair{\oplus_A \circ (1_A \times \pi_0)}{\pi_1 \circ \pi_1} =~ \left( \pi_0 + \varepsilon(\pi_1) \right) \circ  \pair{\oplus_A \circ (1_A \times \pi_0)}{\pi_1 \circ \pi_1} \\
&=~ \pi_0 \circ \pair{\oplus_A \circ (1_A \times \pi_0)}{\pi_1 \circ \pi_1} + \varepsilon(\pi_1) \circ \pair{\oplus_A \circ (1_A \times \pi_0)}{\pi_1 \circ \pi_1} \\
&=~ \pi_0 \circ \pair{\oplus_A \circ (1_A \times \pi_0)}{\pi_1 \circ \pi_1} + \varepsilon\left(\pi_1 \circ \pair{\oplus_A \circ (1_A \times \pi_0)}{\pi_1 \circ \pi_1} \right)\tag*{\ref{Eax2}} \\
&=~ \oplus_A \circ (1_A \times \pi_0) + \varepsilon\left(\pi_1 \circ \pi_1 \right) \\
&=~ \left( \pi_0 + \varepsilon(\pi_1) \right) \circ (1_A \times \pi_0) + \varepsilon\left(\pi_1 \circ \pi_1 \right) \\
&=~ \pi_0 \circ (1_A \times \pi_0) + \varepsilon(\pi_1) \circ  (1_A \times \pi_0) + \varepsilon\left(\pi_1 \circ \pi_1 \right) \\
&=~ \pi_0 \circ (1_A \times \pi_0) + \varepsilon\left(\pi_1 \circ  (1_A \times \pi_0) \right) + \varepsilon\left(\pi_1 \circ \pi_1 \right) \tag*{\ref{Eax2}} \\
&=~ \pi_0 + \varepsilon(\pi_1 \circ \pi_0) + \varepsilon\left(\pi_1 \circ \pi_1 \right) \\
&=~ \pi_0 + \varepsilon\left( \pi_1 \circ \pi_0 + \pi_1 \circ \pi_1 \right) \tag*{\ref{Eax1}} \\
&=~ \pi_0 + \varepsilon\left( \pi_1 \circ \left( \pi_0 +  \pi_1 \right) \right) \tag{$\pi_1$ is additive} \\
&=~ \pi_0 + \varepsilon\left( \pi_1 \circ +_A \right)  \\
&=~ \pi_0 \circ (1_A \times +_A) + \varepsilon\left( \pi_1 \circ (1_A \times +_A) \right) \\
&=~ \pi_0 \circ (1_A \times +_A) + \varepsilon\left( \pi_1 \right) \circ (1_A \times +_A) \tag*{\ref{Eax2}} \\
&=~ \left( \pi_0 + \varepsilon(\pi_1) \right) \circ (1_A \times +_A) \\
&=~ \oplus_A \circ (1_A \times +_A)
\end{align*}
%\begin{align*}
 %   \oplus_A \circ \pair{f}{0_A} &=~ f + \varepsilon(0) \\
 %   &=~ f + 0 \tag*{\ref{Eax1}} \\
 %   &=~ f
 %   \end{align*}
%Now by setting $f=1_A$, we have that:
%\[\oplus_A \circ \langle 1_A, 0_A \circ !_A \rangle = \oplus_A \circ \langle 1_A, 0 \rangle = 1_A \]
%We can also easily compute that:
 %   \begin{align*}
 %   \oplus_A \circ \pair{f}{g + h} &=~ f + \varepsilon(g + h)\\
 %   &=~ f + \varepsilon(g) + \varepsilon{h} \tag*{\ref{Eax1}} \\
  %  &= (f + \varepsilon(g)) + \varepsilon(h)\\
  %  &=~ \oplus_A \circ \pair{f}{g} + + \varepsilon(h)\\
  %  &= \oplus_A \circ \pair{\oplus_A \circ \pair{f}{g}}{h}
%\end{align*}
%By setting $f= \pi_0$, $g = \pi_0 \circ \pi_1$, and $h = \pi_1 \circ \pi_1$, it follows that:
%\[\oplus_A \circ (1_A \times +_A) = \oplus_A \circ \pair{\pi_0}{\pi_0 \circ \pi_1 + \pi_1 \circ \pi_1} = \oplus_A \circ \pair{\oplus_A \circ \pair{\pi_0}{\pi_0 \circ \pi_1}}{\pi_1 \circ \pi_1} = \oplus_A \circ (\oplus_A \times 1_{A}) \]
So $\oplus_A \circ (1_A \times +_A)  = \oplus_A \circ \pair{\oplus_A \circ (1_A \times \pi_0)}{\pi_1 \circ \pi_1}$. Thus~\ref{CAax2} holds. So we conclude that $(A, A, \oplus_A, +_A, 0_A)$ is a change action. \hfill
\end{proof}

It is important to note that in an arbitrary Cartesian left additive category, $\oplus_A$, $+_A$, and $0_A$ are not necessarily natural transformations. That said, $+_A$ and $0_A$ are natural with respect to additive maps, that is, if $f: A \to B$ is additive then $f \circ 0_A = 0_B$ and $f \circ +_A = +_B \circ (f \times f)$. Setting $\cact{A} \equiv (A, A, \oplus_A, +_A, 0_A)$, we note that $f \oplus_{\cact{A}} g = f + \varepsilon(g)$ and $f +_{\cact{A}} g = f + g$, and so in particular $+_{\cact{A}} = +$. Therefore, from now on we will omit the subscripts and simply write $\oplus$ and $+$.

For every Cartesian left additive category, there are always at least two possible infinitesimal extensions: one given by setting the infinitesimal extension of a map to be zero and another given by setting the infinitesimal extension of a map to be itself.

\begin{lem} For any Cartesian left additive category $\mathbb{X}$,
\begin{enumerate}[(\roman{enumi}).,ref={\thelem.\roman{enumi}}]
 \item Setting $\varepsilon(f) = 0$ defines an infinitesimal extension on $\mathbb{X}$ and therefore in this case, $\oplus_A = \pi_0$ and $f \oplus g = f$.
\item Setting $\varepsilon(f) = f$ defines an infinitesimal extension on $\mathbb{X}$ and therefore in this case, $\oplus_A = +_A$ and $f \oplus g = f + g$.
\end{enumerate}
\end{lem}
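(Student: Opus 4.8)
The plan is to check that each candidate $\varepsilon$ satisfies the three axioms~\ref{Eax1},~\ref{Eax2}, and~\ref{Eax3} of an infinitesimal extension, and then to read off the resulting $\oplus_A$ and $f \oplus g$ by substituting into the formula $\oplus_A = \pi_0 + \varepsilon(\pi_1)$ of Lemma~\ref{caelem} and the identity $f \oplus g = f + \varepsilon(g)$ recorded in the discussion following it. Both verifications are entirely direct; no induction, naturality, or universal-property argument is needed, and the ``therefore'' clauses are pure substitutions.

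For case $(i)$, where $\varepsilon(f) = 0$, I would observe that~\ref{Eax1} is immediate from $0 + 0 = 0$, and that~\ref{Eax2} reduces to the identity $\varepsilon(g) \circ f = 0 \circ f = 0 = \varepsilon(g \circ f)$, which holds in any left additive category since pre-composition preserves the zero map. For~\ref{Eax3} I would compute $\pi_i \circ \varepsilon(1_{A \times B}) = \pi_i \circ 0$ and invoke additivity of the projections to conclude that this equals $0 = \varepsilon(\pi_i)$. Substituting $\varepsilon = 0$ then gives $\oplus_A = \pi_0 + 0 = \pi_0$ and $f \oplus g = f + 0 = f$.

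For case $(ii)$, where $\varepsilon$ is the identity function on each hom-set, all three axioms hold on the nose: $\varepsilon(f+g) = f+g = \varepsilon(f)+\varepsilon(g)$ and $\varepsilon(0)=0$ give~\ref{Eax1}; the equation $\varepsilon(g\circ f) = g\circ f = \varepsilon(g)\circ f$ gives~\ref{Eax2}; and $\varepsilon(\pi_i) = \pi_i = \pi_i \circ 1_{A\times B} = \pi_i \circ \varepsilon(1_{A\times B})$ gives~\ref{Eax3}. Substituting $\varepsilon(h) = h$ into the two formulas then yields $\oplus_A = \pi_0 + \pi_1 = +_A$ and $f \oplus g = f + g$.

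I expect the only place demanding any attention to be the second half of~\ref{Eax3} in case $(i)$, which genuinely requires $\pi_0 \circ 0 = 0 = \pi_1 \circ 0$; this is precisely where the hypothesis that the projections are additive, built into the definition of a Cartesian left additive category, is used. Everything else is forced by the commutative monoid structure on the hom-sets, so I would not belabour the computations.
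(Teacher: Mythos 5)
Your proposal is correct and is exactly the routine verification the paper intends: the paper itself leaves this lemma as an exercise (``These are both straightforward to check''), and your direct check of \Eax{1}--\Eax{3} in each case, followed by substitution into $\oplus_A = \pi_0 + \varepsilon(\pi_1)$ and $f \oplus g = f + \varepsilon(g)$, is the expected argument. Your observation that $\pi_i \circ 0 = 0$ in case (i) rests on the additivity of projections built into Definition~\ref{CLACdef} is exactly the right place to locate the only non-vacuous step.
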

\begin{proof} These are both straightforward to check and we leave it as an exercise. \hfill
\end{proof}

We note that while these examples of infinitesimal extensions may seem trivial, they are both very important as they will give rise to key examples of Cartesian difference categories.

\subsection{Cartesian Difference Categories}\label{CdCsec} Here we introduce Cartesian difference categories, the main novel contribution of this paper.

\begin{defi} A \textbf{Cartesian difference category} is a Cartesian left additive category with an infinitesimal extension $\varepsilon$ which is equipped with a \textbf{difference combinator} $\dd$ of the form:
\[ \frac{f : A \to B}{\dd[f]: A \times A \to B} \]
 verifying the following coherence conditions:
 \begin{enumerate}[{\CdCax{\arabic*}},ref={\CdCax{\arabic*}},align=left]
 % \begin{enumerate}[\ref{CdCax1}]
 \setcounter{enumi}{-1}
 \item $f \circ (x + \varepsilon(y)) = f \circ x + \varepsilon\left( \dd[f] \circ \langle x, y \rangle \right)$\label{CdCax0}
\item $\dd[f+g] = \dd[f] + \dd[g]$, $\dd[0] = 0$, and $\dd[\varepsilon(f)] = \varepsilon(\dd[f])$\label{CdCax1}
 \item $\dd[f] \circ \langle x, y + z \rangle = \dd[f] \circ \langle x, y \rangle  + \dd[f] \circ \langle x + \varepsilon(y), z \rangle$ and $\dd[f] \circ \langle x, 0 \rangle = 0$\label{CdCax2}
 \item $\dd[1_A] = \pi_1$ and $\dd[\pi_0] = \pi_0 \circ \pi_1$ and $\dd[\pi_1] = \pi_1 \circ \pi_1$\label{CdCax3}
  \item $\dd[\langle f, g \rangle] = \langle \dd[f], \dd[g] \rangle$\label{CdCax4}
 \item $\dd[g \circ f] = \dd[g] \circ \langle f \circ \pi_0, \dd[f] \rangle$\label{CdCax5}
\item $\dd\left[\dd[f] \right] \circ \left \langle \langle  x, y \rangle, \langle 0, z \rangle \right \rangle= \dd[f] \circ  \langle x + \varepsilon(y), z \rangle$\label{CdCax6}
\item  $ \dd\left[\dd[f] \right] \circ \left \langle \langle x, y \rangle, \langle z, 0 \rangle \right \rangle=  \dd\left[\dd[f] \right] \circ \left \langle \langle x, z \rangle, \langle y, 0 \rangle \right \rangle$\label{CdCax7}
\end{enumerate}
We say that $\dd[f]$ is the derivative of $f$.
\end{defi}

Before giving some intuition on the axioms~\ref{CdCax0} to~\ref{CdCax7}, we first observe that one could have used change action notation $\oplus$ to express~\ref{CdCax0},~\ref{CdCax2}, and~\ref{CdCax6}:
 \begin{description}[leftmargin=*, widest=a]
 \item[\ref{CdCax0}] $f \circ (x \oplus y) = f \circ x \oplus \dd[f] \circ \langle x, y \rangle$
 \item[\ref{CdCax2}]  $\dd[f] \circ \langle x, y + z \rangle = \dd[f] \circ \langle x, y \rangle + \dd[f] \circ \langle x \oplus y, z \rangle$ and $\dd[f] \circ \langle x, 0 \rangle = 0$
 \item[\ref{CdCax6}]  $\dd\left[\dd[f] \right] \circ \left \langle \langle  x, y \rangle, \langle 0, z \rangle \right \rangle= \dd[f] \circ  \langle x \oplus y, z \rangle$
\end{description}
And also, just like Cartesian differential categories,~\ref{CdCax6} and~\ref{CdCax7} have alternative equivalent expressions.
\begin{lem}%
    \label{lem:cdc6a}
    In the presence of the other axioms,~\ref{CdCax6} and~\ref{CdCax7} are equivalent to:
  \begin{enumerate}[\CdCax{\arabic*.{a}},ref={\CdCax{\arabic*.{a}}},align=left]
 \setcounter{enumi}{5}
\item $\dd\left[\dd[f] \right] \circ \left \langle \langle  x, 0 \rangle, \langle 0, y \rangle \right \rangle = \dd[f] \circ  \langle x, y \rangle$\label{CdCax6a}
\item $\dd\left[\dd[f] \right] \circ \left \langle \langle x, y \rangle, \langle z, w \rangle \right \rangle= \dd\left[\dd[f] \right] \circ \left \langle \langle x, z \rangle, \langle y, w \rangle \right \rangle$\label{CdCax7a}
\end{enumerate}
\end{lem}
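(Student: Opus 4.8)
The plan is to establish the claimed equivalence by proving four one-directional facts, of which two are immediate specializations and two require the regularity axiom~\ref{CdCax2}. The argument parallels the Cartesian differential case, the difference being that the base point now picks up $\varepsilon$-scaled shifts whenever the direction of a derivative is split. First I would dispose of the easy directions. The axiom~\ref{CdCax6a} is obtained from~\ref{CdCax6} by setting the second argument $y$ to the zero map: since $\varepsilon(0) = 0$ by~\ref{Eax1}, the right-hand side $\dd[f] \circ \langle x + \varepsilon(0), z\rangle$ collapses to $\dd[f] \circ \langle x, z\rangle$, which is exactly~\ref{CdCax6a}. Dually,~\ref{CdCax7} is the special case $w = 0$ of~\ref{CdCax7a}. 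So it remains to show that~\ref{CdCax6} and~\ref{CdCax7} together imply~\ref{CdCax7a}, and that~\ref{CdCax6a} and~\ref{CdCax7a} together imply~\ref{CdCax6}.

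For the implication~\ref{CdCax6a}, \ref{CdCax7a} $\Rightarrow$~\ref{CdCax6}, I would start from $\dd[\dd[f]] \circ \langle \langle x, y\rangle, \langle 0, z\rangle \rangle$ and apply~\ref{CdCax7a} to swap the second and third arguments, obtaining $\dd[\dd[f]] \circ \langle \langle x, 0\rangle, \langle y, z\rangle\rangle$. Decomposing the direction as $\langle y, z\rangle = \langle y, 0\rangle + \langle 0, z\rangle$ and applying~\ref{CdCax2} to the outer derivative, and using Lemma~\ref{lem:ep-pair} to compute $\varepsilon(\langle y, 0\rangle) = \langle \varepsilon(y), 0\rangle$, the second summand becomes $\dd[\dd[f]] \circ \langle \langle x + \varepsilon(y), 0\rangle, \langle 0, z\rangle\rangle$, which is precisely $\dd[f] \circ \langle x + \varepsilon(y), z\rangle$ by~\ref{CdCax6a}. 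The first summand $\dd[\dd[f]] \circ \langle \langle x, 0\rangle, \langle y, 0\rangle\rangle$ must vanish; I would show this by applying~\ref{CdCax7a} a second time to rewrite it as $\dd[\dd[f]] \circ \langle \langle x, y\rangle, \langle 0, 0\rangle\rangle$, whose direction $\langle 0, 0\rangle = 0$ (Lemma~\ref{claclemma}) forces it to be zero by the vanishing clause of~\ref{CdCax2}.

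For the implication~\ref{CdCax6}, \ref{CdCax7} $\Rightarrow$~\ref{CdCax7a}, I would expand both sides of~\ref{CdCax7a} by the same device. Decomposing the direction $\langle z, w\rangle = \langle z, 0\rangle + \langle 0, w\rangle$ and applying~\ref{CdCax2} to the left-hand side, the base point of the second summand shifts to $\langle x + \varepsilon(z), y\rangle$ via Lemma~\ref{lem:ep-pair}; then~\ref{CdCax7} rewrites the first summand and~\ref{CdCax6} evaluates the second, yielding $\dd[\dd[f]] \circ \langle \langle x, z\rangle, \langle y, 0\rangle\rangle + \dd[f] \circ \langle x + \varepsilon(z) + \varepsilon(y), w\rangle$. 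Performing the analogous expansion on the right-hand side $\dd[\dd[f]] \circ \langle \langle x, z\rangle, \langle y, w\rangle\rangle$ gives $\dd[\dd[f]] \circ \langle \langle x, z\rangle, \langle y, 0\rangle\rangle + \dd[f] \circ \langle x + \varepsilon(y) + \varepsilon(z), w\rangle$, and the two results agree because addition in each hom-monoid is commutative.

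The hard part will be the bookkeeping of the base-point shifts introduced by~\ref{CdCax2}: every time the direction is split, regularity moves the base point by an $\varepsilon$-scaled copy of the first direction, and one must track these shifts carefully (distributing $\varepsilon$ over pairs via Lemma~\ref{lem:ep-pair}) to see that the two expansions coincide. The subtlest single point is verifying that the spurious term $\dd[\dd[f]] \circ \langle \langle x, 0\rangle, \langle y, 0\rangle\rangle$ arising in the derivation of~\ref{CdCax6} is zero, which is exactly where~\ref{CdCax7a} must be invoked a second time in order to expose a zero direction.
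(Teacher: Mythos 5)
Your proposal is correct and takes essentially the same route as the paper's proof: the same specializations ($y = 0$ for~\ref{CdCax6a}, $w = 0$ for~\ref{CdCax7}), the same derivation of~\ref{CdCax6} with a double application of~\ref{CdCax7a} and the vanishing of the spurious term $\dd[\dd[f]] \circ \left\langle \langle x, 0 \rangle, \langle y, 0 \rangle \right\rangle$ via the zero-direction clause of~\ref{CdCax2}, and the same decomposition $\pair{z}{w} = \pair{z}{0} + \pair{0}{w}$ with Lemma~\ref{lem:ep-pair} and commutativity of $+$ for~\ref{CdCax7a}. Your presentation of that last step as expanding both sides and comparing is just the paper's single chain of equalities read forwards and then backwards.
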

\begin{proof} The proof is essentially the same as~\cite[Proposition 4.2]{cockett2014differential}. Assume that the other axioms~\ref{CdCax0} to~\ref{CdCax5} hold. Suppose that~\ref{CdCax6} and~\ref{CdCax7} also hold. We first compute~\ref{CdCax6a} using~\ref{CdCax6}:
\begin{align*}
    \dd\left[\dd[f] \right] \circ \left \langle \langle  x, 0 \rangle, \langle 0, y \rangle \right \rangle &=~ \dd[f] \circ  \langle x + \varepsilon(0), y \rangle \tag*{\ref{CdCax6}} \\
    &=~ \dd[f] \circ  \langle x + 0, y \rangle \tag*{\ref{Eax1}} \\
    &=~ \dd[f] \circ  \langle x, y \rangle
\end{align*}
Next we compute~\ref{CdCax7a} using~\ref{CdCax2},~\ref{CdCax6}, and~\ref{CdCax7}:
\begin{align*} &\dd\left[\dd[f] \right] \circ \left \langle \langle x, y \rangle, \langle z, w \rangle \right \rangle =~ \dd\left[\dd[f] \right] \circ \pair{\pair{x}{y}}{\pair{z}{0} + \pair{0}{w}} \\
&=~ \dd\left[\dd[f] \right] \circ \left \langle \langle x, y \rangle, \langle z, 0 \rangle\right \rangle  + \dd\left[\dd[f] \right] \circ \left \langle \langle x, y \rangle + \varepsilon(\pair{z}{0}), \langle 0, w \rangle\right \rangle \tag*{\ref{CdCax2}} \\
&=~ \dd\left[\dd[f] \right] \circ \left \langle \langle x, 0 \rangle, \langle y, 0 \rangle\right \rangle  + \dd\left[\dd[f] \right] \circ \left \langle \langle x, y \rangle + \pair{\varepsilon(z)}{\varepsilon(0)} , \langle 0, w \rangle\right \rangle \tag{Lemma~\ref{lem:ep-pair}} \\
&=~ \dd\left[\dd[f] \right] \circ \left \langle \langle x, 0 \rangle, \langle y, 0 \rangle\right \rangle  + \dd\left[\dd[f] \right] \circ \left \langle \langle x, y \rangle + \pair{\varepsilon(z)}{0} , \langle 0, w \rangle\right \rangle \tag*{\ref{Eax1}} \\
&=~ \dd\left[\dd[f] \right] \circ \left \langle \langle x, 0 \rangle, \langle y, 0 \rangle\right \rangle  + \dd\left[\dd[f] \right] \circ \left \langle \langle x + \varepsilon(z), y \rangle , \langle 0, w \rangle\right \rangle \\
&=~ \dd\left[\dd[f] \right] \circ \left \langle \langle x, z \rangle, \langle y, 0 \rangle\right \rangle  + \dd[f]  \circ \pair{x + \varepsilon(z)  + \varepsilon(y)}{w} \tag*{\ref{CdCax7} +~\ref{CdCax6}} \\
&=~ \dd\left[\dd[f] \right] \circ \left \langle \langle x, z \rangle, \langle y, 0 \rangle\right \rangle  + \dd[f]  \circ \pair{x + \varepsilon(y)  + \varepsilon(z)}{w}  \\
&=~ \dd\left[\dd[f] \right] \circ \left \langle \langle x, z \rangle, \langle y, 0 \rangle\right \rangle  +  \dd\left[\dd[f] \right] \circ \left \langle \langle x + \varepsilon(y), z \rangle, \langle 0, w \rangle\right \rangle \tag*{\ref{CdCax6}} \\
&=~ \dd\left[\dd[f] \right] \circ \left \langle \langle x, z \rangle, \langle y, 0 \rangle\right \rangle  +  \dd\left[\dd[f] \right] \circ \left \langle \langle x + \varepsilon(y), z + 0 \rangle, \langle 0, w \rangle\right \rangle \\
&=~ \dd\left[\dd[f] \right] \circ \left \langle \langle x, z \rangle, \langle y, 0 \rangle\right \rangle  +  \dd\left[\dd[f] \right] \circ \left \langle \langle x + \varepsilon(y), z + \varepsilon(0) \rangle, \langle 0, w \rangle\right \rangle  \tag*{\ref{Eax1}} \\
&=~ \dd\left[\dd[f] \right] \circ \left \langle \langle x, z \rangle, \langle y, 0 \rangle\right \rangle  +  \dd\left[\dd[f] \right] \circ \left \langle \langle x , z  \rangle + \langle \varepsilon(y), \varepsilon(0) \rangle, \langle 0, w \rangle\right \rangle   \\
&=~ \dd\left[\dd[f] \right] \circ \left \langle \langle x, z \rangle, \langle y, 0 \rangle\right \rangle  +  \dd\left[\dd[f] \right] \circ \left \langle \langle x , z \rangle + \varepsilon(\langle y, 0 \rangle), \langle 0, w \rangle\right \rangle  \tag{Lemma~\ref{lem:ep-pair}} \\
&=~ \dd\left[\dd[f] \right] \circ \pair{\pair{x}{z}}{\pair{y}{0} + \pair{0}{w}}  \tag*{\ref{CdCax2}} \\
&=~ \dd\left[\dd[f] \right] \circ \left \langle \langle x, y \rangle, \langle z, w \rangle \right \rangle
    \tag*{\ref{CdCax7}}
\end{align*}
On the other, suppose instead that~\ref{CdCax6a} and~\ref{CdCax7a} hold. Then~\ref{CdCax7} follows immediately from~\ref{CdCax7a} by setting the last term $w=0$.
\begin{align*}
\dd\left[\dd[f] \right] \circ \left \langle \langle x, y \rangle, \langle z, 0 \rangle \right \rangle=  \dd\left[\dd[f] \right] \circ \left \langle \langle x, z \rangle, \langle y, 0 \rangle \right \rangle \tag*{\ref{CdCax7a}}
\end{align*}
Next we compute~\ref{CdCax6} using~\ref{CdCax2},~\ref{CdCax6a}, and~\ref{CdCax7a}.
\begin{align*}
   & \dd\left[\dd[f] \right] \circ \left \langle \langle  x, y \rangle, \langle 0, z \rangle \right \rangle =~     \dd\left[\dd[f] \right] \circ \left \langle \langle  x, 0 \rangle, \langle y, z \rangle \right \rangle \tag*{\ref{CdCax7a}} \\
    &=~ \dd\left[\dd[f] \right] \circ \pair{\pair{x}{0}}{\pair{y}{0} + \pair{0}{z}} \\
&=~ \dd\left[\dd[f] \right] \circ \left \langle \langle x, 0 \rangle, \langle y, 0 \rangle\right \rangle  + \dd\left[\dd[f] \right] \circ \left \langle \langle x, 0 \rangle + \varepsilon(\pair{y}{0}) , \langle 0, z \rangle\right \rangle \tag*{\ref{CdCax2}} \\
&=~ \dd\left[\dd[f] \right] \circ \left \langle \langle x, y \rangle, \langle 0, 0 \rangle\right \rangle  + \dd\left[\dd[f] \right] \circ \left \langle \langle x, 0 \rangle + \varepsilon(\pair{y}{0}) , \langle 0, z \rangle\right \rangle \tag*{\ref{CdCax7a}} \\
&=~ 0  + \dd\left[\dd[f] \right] \circ \left \langle \langle x, 0 \rangle + \varepsilon(\pair{y}{0}) , \langle 0, z \rangle\right \rangle \tag*{\ref{CdCax2}} \\
&=~ \dd\left[\dd[f] \right] \circ \left \langle \langle x, 0 \rangle + \pair{\varepsilon(y)}{\varepsilon(0)} , \langle 0, z \rangle\right \rangle \tag{Lemma~\ref{lem:ep-pair}} \\
&=~ \dd\left[\dd[f] \right] \circ \left \langle \langle x, 0 \rangle + \pair{\varepsilon(y)}{0} , \langle 0, z \rangle\right \rangle \tag*{\ref{Eax1}} \\
&=~ \dd\left[\dd[f] \right] \circ \left \langle \langle x + \varepsilon(y), 0 \rangle , \langle 0, z \rangle\right \rangle \\
&=~ \dd[f] \circ  \langle x + \varepsilon(y), z \rangle \tag*{\ref{CdCax6a}}
\end{align*}
So we conclude that~\ref{CdCax6} and~\ref{CdCax7} are equivalent to~\ref{CdCax6a} and~\ref{CdCax7a}.
\hfill
\end{proof}

The keen eyed reader will notice that the axioms of a Cartesian difference category are very similar to the axioms of a Cartesian differential category. Indeed,~\ref{CdCax1},~\ref{CdCax3},~\ref{CdCax4}, \ref{CdCax5}, and~\ref{CdCax7} are the same as their Cartesian differential category counterpart. The axioms which are different are~\ref{CdCax2} and~\ref{CdCax6} where the infinitesimal extension $\varepsilon$ is now included, and also there is the new extra axiom~\ref{CdCax0}. On the other hand, interestingly enough,~\ref{CDCax6a} is the same as~\ref{CdCax6a}. We also point out that writing out~\ref{CdCax0} and~\ref{CdCax2} using change action notion, we see that these axioms are precisely~\ref{CADax1} and~\ref{CADax2} respectively. To better understand~\ref{CdCax0} to~\ref{CdCax7} it may be useful to write them out using element-like notation. In element-like notation,~\ref{CdCax0} is written as: % chktex 2
\[
    f(x + \varepsilon(y)) = f(x) + \varepsilon\left(\dd[f](x,y) \right)
\]
This condition can be read as a generalization of the Kock-Lawvere axiom that characterizes the derivative from synthetic differential geometry~\cite{kock2006synthetic}. Broadly speaking, the Kock-Lawvere axiom states that, for any map $f : \mathcal{R} \to \mathcal{R}$ and any $x \in \mathcal{R}$ and $d \in \mathcal{D}$, there exists a unique $f'(x) \in \mathcal{R}$ verifying $f(x + d) = f(x) + d \cdot f'(x)$, where $\mathcal{D}$ is the subset of $\mathcal{R}$ consisting of infinitesimal elements. It is by analogy with the Kock-Lawvere axiom that we refer to $\varepsilon$ as an ``infinitesimal extension'' as it can be thought of as embedding the space $A$ into a subspace $\varepsilon(A)$ of infinitesimal elements.

\ref{CdCax1} states that the differential of a sum of maps is the sum of differentials, and similarly for zero maps and the infinitesimal extension of a map. % chktex 2
%-- as we shall see later, it can also be thought of as stating that the maps $+_A, 0_A$ and $\varepsilon_A$ are linear.
\ref{CdCax2} is the first crucial difference between a Cartesian difference category and a Cartesian differential category. In a Cartesian differential category, the differential of a map is assumed to be additive in its second argument. In a Cartesian difference category, just as derivatives for change actions, while the differential is still required to preserve zeros in its second argument, it is only additive ``up to a small perturbation'', that is: % chktex 2
\[\dd[f](x,y+z) = \dd[f](x,y) + \dd[f](x + \varepsilon(y), z)\]
\ref{CdCax3} tells us what the differential of the identity and projection maps are, while~\ref{CdCax4} says that the differential of a pairing of maps is the pairing of their differentials.~\ref{CdCax5} is the chain rule which expresses what the differential of a composition of maps is: % chktex 2
\[ \dd[g \circ f](x,y) = \dd[g](f(x), \dd[f](x,y))\]
The last two axioms,~\ref{CdCax6} and~\ref{CdCax7} tell us how to work with second order differentials.~\ref{CdCax6} is expressed as follows:
\[\dd \left[ \dd[f] \right]\left( x,y, 0,z \right) =  \dd[f](x + \varepsilon(y), z) \]
and finally~\ref{CdCax7} is expressed as:
\[\dd \left[ \dd[f] \right]\left( x,y, z, 0 \right) =  \dd \left[ \dd[f] \right]\left( x, z, y, 0 \right) \]
It is interesting to note that while~\ref{CdCax6} is different from~\ref{CDCax6}, its alternative version~\ref{CdCax6a} is the same as~\ref{CDCax6a}.
\[\dd \left[ \dd[f] \right]\left( (x,0), (0,y) \right) =  \dd[f](x, z) \]
The interplay between~\ref{CdCax0},~\ref{CdCax2}, and~\ref{CdCax6} gives rise to some remarkable and
counter-intuitive consequences.

\begin{lem}%
  \label{lem:d-epsilon} In a Cartesian difference category $\mathbb{X}$, for any map $f : A \to B$, the following equalities hold for any suitable maps:
  \begin{enumerate}[(\roman{enumi}).,ref={\thelem.\roman{enumi}}]
    \item $\dd[f] \circ \pair{x}{\varepsilon(y)} = \varepsilon(\dd[f]) \circ
    \pair{x}{y}$\label{lem:d-epsilon-i}
%    \item
%    $\dd[f] \circ \pair{x}{u + v}
%    = \dd[f] \circ \pair{x}{u} + \dd[f] \circ \pair{x + \varepsilon^2(u)}{v}$
%    \label{lem:d-epsilon-ii}
%    \item $\varepsilon(\dd^2[f]) \circ \four{x}{u}{v}{0}
%    = \varepsilon^2(\dd^2[f]) \circ \four{x}{u}{v}{0}$
%    \label{lem:d-epsilon-iii}
%  \end{enumerate}
%\end{lem}
    \item $\varepsilon(\dd[f]) \circ \pair{x + \varepsilon(y)}{z}
    = \varepsilon(\dd[f]) \circ \pair{x + \varepsilon^2(y)}{z}$\label{lem:d-epsilon-ii}
    \item $\varepsilon^2\left(\dd\left[ \dd[f] \right] \right) \circ \four{x}{y}{z}{0}
    = \varepsilon^3\left(\dd\left[ \dd[f] \right] \right) \circ \four{x}{y}{z}{0}$\label{lem:d-epsilon-iii}
    \end{enumerate}
\end{lem}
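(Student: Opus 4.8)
The plan is to establish the three identities in sequence, deriving \ref{lem:d-epsilon-i} from the axioms directly and then bootstrapping \ref{lem:d-epsilon-ii} and finally \ref{lem:d-epsilon-iii} from the earlier parts. Throughout I abbreviate $h \defeq \dd[\dd[f]]$. The governing subtlety, and the source of every difficulty below, is that the hom-monoids carry no subtraction, so I can never cancel a summand common to both sides of an equation; each identity must therefore be produced directly rather than as a difference of two expressions.

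For \ref{lem:d-epsilon-i} I would apply \ref{CdCax0} to the map $\dd[f]\colon A\times A\to B$, taking base point $\pair{x}{0}$ and perturbation $\pair{0}{y}$; since $\pair{x}{0}+\varepsilon\pair{0}{y}=\pair{x}{\varepsilon(y)}$ by Lemma~\ref{lem:ep-pair} and \ref{Eax1}, this yields $\dd[f]\circ\pair{x}{\varepsilon(y)}=\dd[f]\circ\pair{x}{0}+\varepsilon\!\left(h\circ\four{x}{0}{0}{y}\right)$. The first summand vanishes by \ref{CdCax2} and $h\circ\four{x}{0}{0}{y}=\dd[f]\circ\pair{x}{y}$ by \ref{CdCax6a}, so \ref{Eax2} collapses the right-hand side to $\varepsilon(\dd[f])\circ\pair{x}{y}$, as required.

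The heart of the argument is \ref{lem:d-epsilon-ii}, which I would obtain by evaluating the single second-order term $T\defeq h\circ\four{x}{\varepsilon(y)}{0}{\varepsilon(z)}$ in two different ways. On one hand \ref{CdCax6} reads $T=\dd[f]\circ\pair{x+\varepsilon^2(y)}{\varepsilon(z)}$. On the other hand I would first swap the inner pairs by \ref{CdCax7a} to get $T=h\circ\four{x}{0}{\varepsilon(y)}{\varepsilon(z)}$, recognise the direction $\pair{\varepsilon(y)}{\varepsilon(z)}=\varepsilon\pair{y}{z}$ (Lemma~\ref{lem:ep-pair}), and pull this $\varepsilon$ outside using \ref{lem:d-epsilon-i} applied to $\dd[f]$, leaving $T=\varepsilon\!\left(h\circ\four{x}{0}{y}{z}\right)$; a routine expansion of $h\circ\four{x}{0}{y}{z}$ via \ref{CdCax2} and \ref{CdCax6a} gives $\dd[f]\circ\pair{x+\varepsilon(y)}{z}$, whence $T=\varepsilon(\dd[f])\circ\pair{x+\varepsilon(y)}{z}=\dd[f]\circ\pair{x+\varepsilon(y)}{\varepsilon(z)}$ by \ref{lem:d-epsilon-i} once more. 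Equating the two evaluations of $T$ gives $\dd[f]\circ\pair{x+\varepsilon(y)}{\varepsilon(z)}=\dd[f]\circ\pair{x+\varepsilon^2(y)}{\varepsilon(z)}$, and a final application of \ref{lem:d-epsilon-i} to both sides rewrites this as the statement of \ref{lem:d-epsilon-ii}. The essential trick is that routing through \ref{CdCax7a} and \ref{lem:d-epsilon-i} exposes only a single $\varepsilon$ on $y$, whereas \ref{CdCax6} exposes two; this one-$\varepsilon$ discrepancy is exactly what \ref{lem:d-epsilon-ii} asserts.

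Finally, \ref{lem:d-epsilon-iii} would follow by instantiating the \emph{already proven} \ref{lem:d-epsilon-ii} at the map $\dd[f]$ rather than at $f$. Reading \ref{lem:d-epsilon-ii} for $g=\dd[f]$ (so that $\dd[g]=h$) at base $X=\pair{x}{0}$, perturbation $Y=\pair{0}{z}$, and direction $Z=\pair{y}{0}$, and using $X+\varepsilon(Y)=\pair{x}{\varepsilon(z)}$ and $X+\varepsilon^2(Y)=\pair{x}{\varepsilon^2(z)}$, gives directly $\varepsilon(h)\circ\four{x}{\varepsilon(z)}{y}{0}=\varepsilon(h)\circ\four{x}{\varepsilon^2(z)}{y}{0}$. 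I would then rewrite each side with the auxiliary identity $h\circ\four{a}{\varepsilon(b)}{c}{0}=\varepsilon\!\left(h\circ\four{a}{b}{c}{0}\right)$, itself a one-line consequence of \ref{CdCax7a}, \ref{lem:d-epsilon-i} and \ref{Eax2}, which turns the left side into $\varepsilon^2(h)\circ\four{x}{z}{y}{0}$ and the right side into $\varepsilon^3(h)\circ\four{x}{z}{y}{0}$; a last \ref{CdCax7a} swap recovers the stated form $\varepsilon^2(h)\circ\four{x}{y}{z}{0}=\varepsilon^3(h)\circ\four{x}{y}{z}{0}$. I expect the main obstacle to be conceptual rather than technical: the naive manipulations reduce \ref{lem:d-epsilon-ii} and \ref{lem:d-epsilon-iii} to one another only after an illegitimate cancellation of a shared summand, and the decisive idea is to sidestep this by feeding $\dd[f]$ back into \ref{lem:d-epsilon-ii}, so that the equality required by \ref{lem:d-epsilon-iii} becomes a genuine special case rather than a difference.
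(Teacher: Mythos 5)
Your proposal is correct and takes essentially the same route as the paper: part (i) is the paper's argument verbatim (\ref{CdCax0} applied to $\dd[f]$ at base $\pair{x}{0}$ and perturbation $\pair{0}{y}$), your two-way evaluation of the pivot $\dd[\dd[f]] \circ \four{x}{\varepsilon(y)}{0}{\varepsilon(z)}$ in (ii) is the paper's single chain of equalities read outward from exactly that middle term, and your key idea for (iii) --- instantiating the already-proven (ii) at the map $\dd[f]$ --- is precisely the step the paper's long computation performs, with your auxiliary identity $\dd[\dd[f]]\circ\four{a}{\varepsilon(b)}{c}{0} = \varepsilon\left(\dd[\dd[f]]\circ\four{a}{b}{c}{0}\right)$ playing the role of the paper's \ref{CdCax1}-shuffles of $\varepsilon$ into $f$. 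One cosmetic nit: your ``routine expansion'' of $\dd[\dd[f]]\circ\four{x}{0}{y}{z}$ into $\dd[f]\circ\pair{x+\varepsilon(y)}{z}$ needs \ref{CdCax7a} as well (either to see that the summand $\dd[\dd[f]]\circ\four{x}{0}{y}{0}$ produced by \ref{CdCax2} vanishes, or, more directly, to swap and apply \ref{CdCax6}), not just \ref{CdCax2} and \ref{CdCax6a}.
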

\begin{proof} These are mostly straightforward calculations. We start by computing (i):
 \begin{align*}
      \dd[f] \circ \pair{x}{\varepsilon(y)} &=~    \dd[f] \circ \left( \pair{x}{0} + \pair{0}{\varepsilon(y)} \right) \\
      &=~    \dd[f] \circ \left( \pair{x}{0} + \pair{\varepsilon(0)}{\varepsilon(y)} \right) \tag*{\ref{Eax1}} \\
      &=~    \dd[f] \circ \left( \pair{x}{0} + \varepsilon(\pair{0}{y}) \right) \tag{Lemma~\ref{lem:ep-pair}} \\
      &=~ \dd[f]\circ \pair{x}{0} + \varepsilon\left(\dd^2[f] \circ  \four{x}{0}{0}{y} \right)    \tag*{\ref{CdCax0}} \\
      &=~0 + \varepsilon(\dd[f] \circ \pair{x}{y}) \tag*{\ref{CdCax2} +~\ref{CdCax6a}} \\
      &=~ \varepsilon(\dd[f]) \circ \pair{x}{y} \tag*{\ref{Eax2}} \\
    \end{align*}
Next, we use (i) to compute (ii):
 \begin{align*}
    \varepsilon(\dd[f]) \circ \pair{x + \varepsilon(y)}{z} &=~ \varepsilon\left(  \dd[f] \circ \pair{x + \varepsilon(y)}{z}  \right) \tag*{\ref{Eax2}} \\
    &=~ \varepsilon\left( \dd\left[ \dd[f] \right] \circ \four{x}{y}{0}{z} \right)  \tag*{\ref{CdCax6}} \\
        &=~ \varepsilon\left( \dd\left[ \dd[f] \right] \circ \four{x}{0}{y}{z} \right)  \tag*{\ref{CdCax7a}} \\
         &=~ \varepsilon\left( \dd\left[ \dd[f] \right] \right) \circ \four{x}{0}{y}{z} \tag*{\ref{Eax2}} \\
      &=~ \dd\left[ \dd[f] \right] \circ \pair{\pair{x}{0}}{\varepsilon(\pair{y}{z})}  \tag{Lemma~\ref{lem:d-epsilon-i}} \\
      &=~ \dd\left[ \dd[f] \right] \circ \four{x}{0}{\varepsilon(y)}{\varepsilon(z)}  \tag{Lemma~\ref{lem:ep-pair}} \\
      &=~ \dd\left[ \dd[f] \right] \circ \four{x}{\varepsilon(y)}{0}{\varepsilon(z)}   \tag*{\ref{CdCax7a}} \\
&=~ \dd[f] \circ \pair{x + \varepsilon^2(y)}{\varepsilon(z)}    \tag*{\ref{CdCax6}} \\
  &=~ \varepsilon(\dd[f]) \circ \pair{x + \varepsilon^2(y)}{z}
    \tag{Lemma~\ref{lem:d-epsilon-i}}
    \end{align*}
Lastly for (iii), we use (i) and (ii):
 \begin{align*}
\varepsilon^2\left(\dd\left[ \dd[f] \right] \right) \circ \four{x}{y}{z}{0} &=~ \dd\left[ \dd[\varepsilon^2(f)] \right] \circ \four{x}{y}{z}{0} \tag*{\ref{CdCax1}} \\
&=~ \dd\left[ \dd[\varepsilon^2(f)] \right] \circ \four{x}{z}{y}{0} \tag*{\ref{CdCax7a}} \\
&=~ \varepsilon \left( \dd\left[ \dd[\varepsilon(f)] \right] \right) \circ \four{x}{z}{y}{0} \tag*{\ref{CdCax1}} \\
&=~ \dd\left[ \dd[\varepsilon(f)] \right] \circ  \pair{\pair{x}{z}}{\varepsilon(\pair{y}{0})}   \tag{Lemma~\ref{lem:d-epsilon-i}} \\
&=~ \dd\left[ \dd[\varepsilon(f)] \right] \circ  \four{x}{z}{\varepsilon(y)}{\varepsilon(0)}  \tag{Lemma~\ref{lem:ep-pair}} \\
&=~ \dd\left[ \dd[\varepsilon(f)] \right] \circ  \four{x}{z}{\varepsilon(y)}{0}  \tag*{\ref{Eax2}} \\
&=~  \dd\left[ \dd[\varepsilon(f)] \right] \circ  \four{x}{\varepsilon(y)}{z}{0} \tag*{\ref{CdCax7a}} \\
&=~ \varepsilon\left(\dd\left[ \dd[f] \right] \right) \circ \four{x}{\varepsilon(y)}{z}{0} \tag*{\ref{CdCax1}} \\
&=~ \varepsilon\left(\dd\left[ \dd[f] \right] \right) \circ \pair{\pair{x}{0} + \pair{0}{\varepsilon(y)}}{\pair{z}{0}} \\
&=~ \varepsilon\left(\dd\left[ \dd[f] \right] \right) \circ \pair{\pair{x}{0} + \pair{\varepsilon(0)}{\varepsilon(y)}}{\pair{z}{0}} \tag*{\ref{Eax1}} \\
&=~ \varepsilon\left(\dd\left[ \dd[f] \right] \right) \circ \pair{\pair{x}{0} + \varepsilon(\pair{0}{y})}{\pair{z}{0}} \tag{Lemma~\ref{lem:ep-pair}} \\
&=~ \varepsilon\left(\dd\left[ \dd[f] \right] \right) \circ \pair{\pair{x}{0} + \varepsilon^2(\pair{0}{y})}{\pair{z}{0}} \tag{Lemma~\ref{lem:d-epsilon-ii}} \\
&=~  \varepsilon\left(\dd\left[ \dd[f] \right] \right) \circ \pair{\pair{x}{0} + \pair{\varepsilon^2(0)}{\varepsilon^2(y)}}{\pair{z}{0}}  \tag{Lemma~\ref{lem:ep-pair}} \\
&=~ \varepsilon\left(\dd\left[ \dd[f] \right] \right) \circ \pair{\pair{x}{0} + \pair{0}{\varepsilon^2(y)}}{\pair{z}{0}}\tag*{\ref{Eax1}} \\
&=~ \varepsilon\left(\dd\left[ \dd[f] \right] \right) \circ \four{x}{\varepsilon^2(y)}{z}{0} \\
&=~ \dd\left[ \dd[\varepsilon(f)] \right] \circ \four{x}{\varepsilon^2(y)}{z}{0} \tag*{\ref{CdCax1}} \\
&=~ \dd\left[ \dd[\varepsilon(f)] \right] \circ \four{x}{z}{\varepsilon^2(y)}{0} \tag*{\ref{CdCax7a}} \\
&=~ \dd\left[ \dd[\varepsilon(f)] \right] \circ \four{x}{z}{\varepsilon^2(y)}{\varepsilon^2(0)} \tag*{\ref{Eax1}} \\
&=~ \dd\left[ \dd[\varepsilon(f)] \right] \circ  \pair{\pair{x}{z}}{\varepsilon^2(\pair{y}{0})} \tag{Lemma~\ref{lem:ep-pair}} \\
&=~ \varepsilon^2\left(\dd\left[ \dd[\varepsilon(f)] \right] \right) \circ \four{x}{z}{y}{0} \tag{Lemma~\ref{lem:d-epsilon-i}} \\
&=~ \varepsilon^2\left(\dd\left[ \dd[\varepsilon(f)] \right] \circ \four{x}{z}{y}{0} \right) \tag*{\ref{Eax2}} \\
&=~ \varepsilon^2\left(\dd\left[ \dd[\varepsilon(f)] \right] \circ \four{x}{y}{z}{0} \right) \tag*{\ref{CdCax7a}} \\
&=~ \varepsilon^2\left(\dd\left[ \dd[\varepsilon(f)] \right] \right) \circ \four{x}{y}{z}{0} \tag*{\ref{Eax2}} \\
&=~ \varepsilon^3\left(\dd\left[ \dd[f] \right] \right) \circ \four{x}{y}{z}{0} \tag*{\ref{CdCax1} \qedhere}
    \end{align*}
\end{proof}

A desirable identity we would like to hold is a slightly stronger version of Lemma~\ref{lem:d-epsilon-iii}:
\begin{equation}\label{lem:d-epsilon-iii'}\begin{gathered} \varepsilon(\dd\left[ \dd[f] \right]) \circ \four{x}{y}{z}{0}
        = \varepsilon^2(\dd\left[ \dd[f] \right]) \circ \four{x}{y}{z}{0}
 \end{gathered}\end{equation}
Note that if $\varepsilon$ is injective (i.e $\varepsilon(f) = \varepsilon(g)$ implies that $f=g$) or if $\varepsilon$ is idempotent (i.e. $\varepsilon^2(f)=\varepsilon(f)$), then (\ref{lem:d-epsilon-iii'}) holds. In fact, in all of our examples of Cartesian difference categories in Section~\ref{EXsec}, (\ref{lem:d-epsilon-iii'}) holds (even for those where $\varepsilon$ is neither injective or idempotent). It is currently unclear if (\ref{lem:d-epsilon-iii'}) holds in any Cartesian difference category, as we do not have a counterexample. If (\ref{lem:d-epsilon-iii'}) holds and the infinitesimal extension is nilpotent, then it turns out that~\ref{CdCax2} is the same as~\ref{CDCax2}.

\begin{lem}\label{epsilon-nilponent}
  Let $\mathbb{X}$ be a Cartesian difference category with a nilpotent
  infinitesimal extension, that is, for every map $f: A \to B$ there is some $k \in \mathbb{N}$ such that
  $\varepsilon^k(f) = 0$, and suppose that (\ref{lem:d-epsilon-iii'}) holds.
Then~\ref{CDCax2} holds, or in other words, every derivative $\dd[f]$ is additive in its second argument, that is, $\dd[f] \circ \langle x, y + z \rangle = \dd[f] \circ \langle x, y \rangle  + \dd[f] \circ \langle x, z \rangle$ and $\dd[f] \circ \langle x, 0 \rangle = 0$.
\end{lem}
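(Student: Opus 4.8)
The plan is to reduce \ref{CDCax2} to a single cancellation identity and then use the two hypotheses to annihilate an ``error term''. The second conjunct of \ref{CDCax2}, namely $\dd[f] \circ \pair{x}{0} = 0$, is literally the second conjunct of \ref{CdCax2}, so nothing is required there. For the first conjunct, \ref{CdCax2} already gives
\[ \dd[f] \circ \pair{x}{y+z} = \dd[f] \circ \pair{x}{y} + \dd[f] \circ \pair{x + \varepsilon(y)}{z}, \]
so it suffices to prove the single identity $\dd[f] \circ \pair{x + \varepsilon(y)}{z} = \dd[f] \circ \pair{x}{z}$; substituting this into the displayed equation yields exactly the additivity demanded by \ref{CDCax2}.

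The first genuine step is to extract from the hypotheses the vanishing identity
\[ \varepsilon\left(\dd\left[\dd[f]\right]\right) \circ \four{x}{y}{z}{0} = 0. \]
To obtain it, I would iterate (\ref{lem:d-epsilon-iii'}): applying $\varepsilon$ to both sides of (\ref{lem:d-epsilon-iii'}) and using \ref{Eax2} promotes the relation $\varepsilon^{n} = \varepsilon^{n+1}$ (after precomposition with $\four{x}{y}{z}{0}$) to $\varepsilon^{n+1} = \varepsilon^{n+2}$, so chaining these equalities gives $\varepsilon\left(\dd\left[\dd[f]\right]\right) \circ \four{x}{y}{z}{0} = \varepsilon^{k}\left(\dd\left[\dd[f]\right]\right) \circ \four{x}{y}{z}{0}$ for every $k \geq 1$. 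Nilpotency applied to the map $\dd\left[\dd[f]\right]$ supplies a $k$ with $\varepsilon^{k}\left(\dd\left[\dd[f]\right]\right) = 0$, and since the zero map absorbs precomposition, the right-hand side is $0$, giving the claimed vanishing.

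The second step proves the cancellation identity by expanding its left-hand side with \ref{CdCax0}. Using Lemma~\ref{lem:ep-pair}, \ref{Eax1}, and Lemma~\ref{claclemma} one writes $\pair{x + \varepsilon(y)}{z} = \pair{x}{z} + \varepsilon\left(\pair{y}{0}\right)$, so \ref{CdCax0} applied to the map $\dd[f]$ yields
\[ \dd[f] \circ \pair{x + \varepsilon(y)}{z} = \dd[f] \circ \pair{x}{z} + \varepsilon\left(\dd\left[\dd[f]\right] \circ \four{x}{z}{y}{0}\right). \]
By \ref{Eax2} the error term equals $\varepsilon\left(\dd\left[\dd[f]\right]\right) \circ \four{x}{z}{y}{0}$, which is an instance of the vanishing identity of the previous step and hence $0$. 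This establishes $\dd[f] \circ \pair{x + \varepsilon(y)}{z} = \dd[f] \circ \pair{x}{z}$, and thus \ref{CDCax2}.

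The main obstacle is the first step. Since $\varepsilon$ is assumed neither injective nor idempotent, one cannot simply cancel a single $\varepsilon$ from (\ref{lem:d-epsilon-iii'}); the key realization is that (\ref{lem:d-epsilon-iii'}) \emph{stabilizes} all powers $\varepsilon^{k}$ with $k \geq 1$ of $\dd\left[\dd[f]\right]$ on the arguments $\four{x}{y}{z}{0}$, at which point nilpotency collapses the whole tower to zero. Once this vanishing is in hand, the remainder is a direct computation, the only ingredient being the observation that the second-order remainder produced by \ref{CdCax0} is precisely of the vanishing form.
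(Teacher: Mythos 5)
Your proof is correct and takes essentially the same route as the paper: both reduce to cancelling the shift in $\dd[f] \circ \pair{x + \varepsilon(y)}{z}$ by rewriting it as $\dd[f] \circ \left( \pair{x}{z} + \varepsilon(\pair{y}{0}) \right)$, expanding via \ref{CdCax0} to expose the error term $\varepsilon\left(\dd\left[\dd[f]\right]\right) \circ \four{x}{z}{y}{0}$, and annihilating it by iterating (\ref{lem:d-epsilon-iii'}) up to the nilpotency index. The only cosmetic difference is that you apply nilpotency directly to the map $\dd\left[\dd[f]\right]$, whereas the paper first moves $\varepsilon^k$ inside the combinator via \ref{CdCax1} to get $\dd\left[\dd[\varepsilon^k(f)]\right] = \dd\left[\dd[0]\right] = 0$; both are legitimate since the nilpotency hypothesis quantifies over all maps.
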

\begin{proof} By~\ref{CdCax2}, it already holds that $\dd[f] \circ \langle x, 0 \rangle = 0$. Therefore it remains to show that $\dd[f] \circ \langle x, y + z \rangle = \dd[f] \circ \langle x, y \rangle  + \dd[f] \circ \langle x, z \rangle$.  So suppose that $\varepsilon^k(f) = 0$ for some $k \in \mathbb{N}$. Then using Lemma~\ref{lem:d-epsilon-iii'}, we compute that:
  \begin{align*}
&\dd[f] \circ \langle x, y + z \rangle =~\dd[f] \circ \langle x, y \rangle  + \dd[f] \circ \langle x + \varepsilon(y), z \rangle  \tag*{\ref{CdCax2}} \\
&=~ \dd[f] \circ \langle x, y \rangle  + \dd[f] \circ \left( \langle x, z \rangle + \langle \varepsilon(y), 0 \rangle \right)      \\
&=~ \dd[f] \circ \langle x, y \rangle  + \dd[f] \circ \left( \langle x, z \rangle + \langle \varepsilon(y), \varepsilon(0) \rangle \right)      \tag*{\ref{Eax1}} \\
&=~ \dd[f] \circ \langle x, y \rangle  + \dd[f] \circ \left( \langle x, z \rangle + \varepsilon(\langle y, 0 \rangle) \right)  \tag{Lemma~\ref{lem:ep-pair}} \\
&=~ \dd[f] \circ \langle x, y \rangle  + \dd[f] \circ \langle x , z \rangle + \varepsilon \left( \dd\left[ \dd[f] \right] \circ \four{x}{z}{y}{0} \right)  \tag*{\ref{CdCax0}} \\
&=~ \dd[f] \circ \langle x, y \rangle  + \dd[f] \circ \langle x , z \rangle + \varepsilon \left( \dd\left[ \dd[f] \right] \right)  \circ \four{x}{z}{y}{0}  \tag*{\ref{Eax2}} \\
&=~ \dd[f] \circ \langle x, y \rangle  + \dd[f] \circ \langle x , z \rangle + \varepsilon^k \left( \dd\left[ \dd[f] \right] \right) \circ \four{x}{z}{y}{0}  \tag{Iterating (\ref{lem:d-epsilon-iii'})} \\
&=~ \dd[f] \circ \langle x, y \rangle  + \dd[f] \circ \langle x , z \rangle +  \dd\left[ \dd[\varepsilon^k(f)] \right]   \circ \four{x}{z}{y}{0}  \tag*{\ref{CdCax1}} \\
&=~ \dd[f] \circ \langle x, y \rangle  + \dd[f] \circ \langle x , z \rangle + \dd\left[ \dd[0] \right]  \circ \four{x}{z}{y}{0} \tag{$\varepsilon$ is nilpotent} \\
&=~ \dd[f] \circ \langle x, y \rangle  + \dd[f] \circ \langle x , z \rangle + 0  \circ \four{x}{z}{y}{0}  \tag*{\ref{CdCax1}} \\
&=~ \dd[f] \circ \langle x, y \rangle  + \dd[f] \circ \langle x , z \rangle + 0 \\
&=~ \dd[f] \circ \langle x, y \rangle  + \dd[f] \circ \langle x , z \rangle
\end{align*}
We conclude that $\dd[f]$ is additive in its second argument, and so~\ref{CDCax2} holds.
\end{proof}

\subsection{Another look at Cartesian Differential Categories}\label{CDCisCdCsec}

Here we explain how every Cartesian differential category is a Cartesian difference category where the infinitesimal extension is given by zero.

\begin{prop}\label{CDtoCd} Every Cartesian differential category $\mathbb{X}$ with differential combinator $\mathsf{D}$ is a Cartesian difference category where the infinitesimal extension is defined as $\varepsilon(f) = 0$ and the difference combinator is defined to be the differential combinator, $\dd = \mathsf{D}$.
\end{prop}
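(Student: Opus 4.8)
The plan is to verify the two ingredients of a Cartesian difference structure separately: that $\varepsilon(f) = 0$ is a legitimate infinitesimal extension, and that the differential combinator $\mathsf{D}$, reused verbatim as $\dd$, satisfies the axioms~\ref{CdCax0}--\ref{CdCax7}. The first ingredient requires no new work, since the earlier lemma showing that $\varepsilon(f) = 0$ defines an infinitesimal extension already supplies~\ref{Eax1}--\ref{Eax3}, along with the identities $\oplus_A = \pi_0$ and $f \oplus g = f$. The crux of the argument is then the single observation that, once $\varepsilon$ is set to $0$, every infinitesimal perturbation in the difference axioms vanishes, so that each one degenerates into its differential counterpart.

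I would run through~\ref{CdCax0}--\ref{CdCax7} in order. Axioms~\ref{CdCax3},~\ref{CdCax5}, and~\ref{CdCax7} are literally~\ref{CDCax3},~\ref{CDCax5}, and~\ref{CDCax7}, while~\ref{CdCax4} is the pairing clause of~\ref{CDCax4}, so these hold verbatim. For~\ref{CdCax2} and~\ref{CdCax6}, substituting $\varepsilon(y) = 0$ replaces the perturbed input $x + \varepsilon(y)$ by $x$, after which~\ref{CdCax2} becomes exactly~\ref{CDCax2} and~\ref{CdCax6} becomes exactly~\ref{CDCax6}. Axiom~\ref{CdCax0} is the most degenerate: with $\varepsilon = 0$ both sides collapse to $f \circ x$, so it holds trivially.

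The only clause that calls for a moment's thought is the third conjunct of~\ref{CdCax1}, namely $\dd[\varepsilon(f)] = \varepsilon(\dd[f])$; here the left-hand side is $\mathsf{D}[0] = 0$ by~\ref{CDCax1} since $\varepsilon(f) = 0$, while the right-hand side is $0$ by the definition of $\varepsilon$, so the two agree. The first two conjuncts of~\ref{CdCax1} are precisely the two statements of~\ref{CDCax1}. I do not anticipate any real obstacle here: the whole content of the proposition is that the Cartesian differential axioms are the $\varepsilon = 0$ specialization of the Cartesian difference axioms, so the task reduces to the bookkeeping of checking that no stray infinitesimal term survives in any of the eight conditions.
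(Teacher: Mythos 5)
Your proposal is correct and follows essentially the same route as the paper's own proof: both argue that~\ref{CdCax1} (first two parts),~\ref{CdCax3},~\ref{CdCax4},~\ref{CdCax5}, and~\ref{CdCax7} coincide verbatim with their differential counterparts, that~\ref{CdCax0} and the third part of~\ref{CdCax1} trivialize to $0=0$ when $\varepsilon = 0$, and that~\ref{CdCax2} and~\ref{CdCax6} collapse to~\ref{CDCax2} and~\ref{CDCax6} once the perturbation $x + \varepsilon(y)$ reduces to $x$. Your explicit appeal to the earlier lemma establishing that $\varepsilon(f)=0$ is an infinitesimal extension is left implicit in the paper but is the same ingredient.
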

\begin{proof} As noted before, the first two parts of the~\ref{CdCax1}, the second part of~\ref{CdCax2},~\ref{CdCax3}, \ref{CdCax4},~\ref{CdCax5}, and~\ref{CdCax7} are precisely the same as their Cartesian differential axiom counterparts. On the other hand, since $\varepsilon(f) =0$,~\ref{CdCax0} and the third part of~\ref{CdCax1} trivially state that $0=0$, while the first part of~\ref{CdCax2} and~\ref{CdCax6} end up being precisely the first part of~\ref{CDCax2} and~\ref{CDCax6}. Therefore, the differential combinator satisfies the Cartesian difference axioms and we conclude that a Cartesian differential category is a Cartesian difference category. % chktex 2
\hfill
\end{proof}

Conversely, one can always build a Cartesian differential category from a Cartesian difference category by considering the objects for which the infinitesimal extension of the identity map is the zero map. We call such objects $\varepsilon$-vanishing.

\begin{defi}\label{def:ep-vanishing} In a Cartesian left additive category $\mathbb{X}$ with infinitesimal extension $\varepsilon$, an object $A$ is said to be \textbf{$\varepsilon$-vanishing} if $\varepsilon_A = \varepsilon(1_A) = 0$. We denote $\mathbb{X}_{\varepsilon\text{-van}}$ to be the full subcategory of $\varepsilon$-vanishing objects of $\mathbb{X}$.
\end{defi}

The full subcategory of $\varepsilon$-vanishing objects always forms a Cartesian left additive category.

\begin{lem}\label{lem:ep-van-prod} In a Cartesian left additive category $\mathbb{X}$ with infinitesimal extension $\varepsilon$,
 \begin{enumerate}[(\roman{enumi}).,ref={\thelem.\roman{enumi}}]
\item The terminal object $\top$ is $\varepsilon$-vanishing;
\item If $A$ and $B$ are $\varepsilon$-vanishing then their product $A \times B$ is $\varepsilon$-vanishing;
  \end{enumerate}
Therefore, $\mathbb{X}_{\varepsilon\text{-van}}$ is a Cartesian left additive category with the same Cartesian left additive structure as $\mathbb{X}$. Furthermore:
 \begin{enumerate}[(\roman{enumi}).,ref={\thelem.\roman{enumi}}]
 \setcounter{enumi}{2}
\item If $B$ is $\varepsilon$-vanishing, then for any map $f: A \to B$, $\varepsilon(f) = 0$.
  \end{enumerate}
\end{lem}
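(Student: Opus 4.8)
The plan is to reduce everything to the bijective correspondence of Lemma~\ref{lem:epbij}, which identifies an infinitesimal extension with the family $\varepsilon_A = \varepsilon(1_A)$ satisfying $\varepsilon_{A \times B} = \varepsilon_A \times \varepsilon_B$ and $\varepsilon(f) = \varepsilon_A \circ f$, together with the basic left additive identities $0 \circ f = 0$, $\pair{0}{0} = 0$, and $!_A = 0$ from Lemma~\ref{claclemma}. Each of the three claims is then a short computation; no induction or clever construction is needed.

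For (i), I would observe that $1_\top$ and $!_\top$ are both maps $\top \to \top$, hence equal by the universal property of $\top$, and that $!_\top = 0$ by Lemma~\ref{claclemma}(iv). Therefore $\varepsilon_\top = \varepsilon(1_\top) = \varepsilon(0) = 0$ by~\ref{Eax1}, so $\top$ is $\varepsilon$-vanishing. For (ii), assuming $\varepsilon_A = 0$ and $\varepsilon_B = 0$, Lemma~\ref{lem:epbij} gives $\varepsilon_{A \times B} = \varepsilon_A \times \varepsilon_B = 0 \times 0$; since $0 \times 0 = \pair{0 \circ \pi_0}{0 \circ \pi_1} = \pair{0}{0} = 0$ (using $0 \circ f = 0$ and Lemma~\ref{claclemma}(i)), we conclude $\varepsilon_{A \times B} = 0$, so $A \times B$ is $\varepsilon$-vanishing.

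For (iii), with $B$ $\varepsilon$-vanishing so that $\varepsilon_B = 0$, the formula $\varepsilon(f) = \varepsilon_B \circ f$ from Lemma~\ref{lem:epbij} combined with $0 \circ f = 0$ yields $\varepsilon(f) = 0 \circ f = 0$ for every $f : A \to B$. Finally, for the intermediate ``Therefore'' clause, parts (i) and (ii) show that $\mathbb{X}_{\varepsilon\text{-van}}$ contains the terminal object and is closed under the binary products of $\mathbb{X}$; since it is a \emph{full} subcategory, each hom-set carries the very same commutative monoid structure, pre-composition still preserves it, and the projections (being maps of $\mathbb{X}$ between $\varepsilon$-vanishing objects) are still additive, so $\mathbb{X}_{\varepsilon\text{-van}}$ inherits the Cartesian left additive structure of $\mathbb{X}$ verbatim.

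I do not expect a genuine obstacle here: every step is forced by Lemma~\ref{lem:epbij} and the left additive axioms. The only point requiring a little care is the ``Therefore'' clause, where one must note that the product of two $\varepsilon$-vanishing objects, computed in $\mathbb{X}$, actually lands back inside the subcategory (this is exactly part (ii)), so that the finite products of $\mathbb{X}$ genuinely serve as finite products in $\mathbb{X}_{\varepsilon\text{-van}}$ and no new structure needs to be constructed.
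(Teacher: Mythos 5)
Your proof is correct and follows essentially the same route as the paper's: part (i) via $1_\top = 0$ and~\ref{Eax1}, part (ii) via $\varepsilon_{A\times B} = \varepsilon_A \times \varepsilon_B$ from Lemma~\ref{lem:epbij}, and part (iii) via $\varepsilon(f) = \varepsilon_B \circ f$ with $\varepsilon_B = 0$. Your extra justifications (that $1_\top = !_\top = 0$, that $0 \times 0 = 0$, and that fullness of the subcategory gives the inherited left additive structure) merely spell out steps the paper leaves implicit.
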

\begin{proof} For (i), recall that for the terminal object, $1_\top = 0$. Therefore by~\ref{Eax1} we easily see that $\varepsilon(1_\top) = \varepsilon(0) = 0$. So the terminal object $\top$ is $\varepsilon$-vanishing. For (ii), first recall that by Lemma~\ref{lem:epbij}, for any pair of objects $A$ and $B$, $\varepsilon_{A \times B} = \varepsilon_A \times \varepsilon_B$. Therefore, if both $A$ and $B$ are $\varepsilon$-vanishing, we have that $\varepsilon_{A \times B} = \varepsilon_A \times \varepsilon_B = 0 \times 0 = 0$. So there product of $\varepsilon$-vanishing objects is again $\varepsilon$-vanishing. Thus $\mathbb{X}_{\varepsilon\text{-van}}$ is closed under finite products and therefore we conclude that $\mathbb{X}_{\varepsilon\text{-van}}$ is also a Cartesian left additive category. For (iii), suppose that $B$ is $\varepsilon$-vanishing. Then by~\ref{Eax2}, we have that $\varepsilon(f) = \varepsilon(1_B) \circ f = 0 \circ f = 0$. \end{proof}

For a Cartesian difference category, its subcategory of $\varepsilon$-vanishing objects is a Cartesian differential category.

\begin{prop}\label{CdtoCD} For a Cartesian difference category $\mathbb{X}$ with infinitesimal extension $\varepsilon$ and difference combinator $\dd$, then $\mathbb{X}_{\varepsilon\text{-van}}$ is a Cartesian differential category where the differential combinator is defined to be the difference combinator, $\mathsf{D} = \dd$.
\end{prop}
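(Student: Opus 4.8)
The plan is to verify that the difference combinator $\dd$, when restricted to the full subcategory $\mathbb{X}_{\varepsilon\text{-van}}$, satisfies all seven Cartesian differential axioms \ref{CDCax1} through \ref{CDCax7}. By Lemma~\ref{lem:ep-van-prod}, we already know $\mathbb{X}_{\varepsilon\text{-van}}$ is a Cartesian left additive category with the same structure as $\mathbb{X}$, so the combinator $\dd$ is well-typed on it (the product of $\varepsilon$-vanishing objects is again $\varepsilon$-vanishing, so $\dd[f]: A \times A \to B$ stays within the subcategory whenever $f$ does). The key observation, recorded in Lemma~\ref{lem:ep-van-prod}(iii), is that whenever the codomain $B$ is $\varepsilon$-vanishing, we have $\varepsilon(g) = 0$ for every map $g: A \to B$. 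Since all objects in $\mathbb{X}_{\varepsilon\text{-van}}$ are $\varepsilon$-vanishing, this means $\varepsilon$ collapses to zero on every hom-set of the subcategory.

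With this collapse in hand, each Cartesian differential axiom follows by specializing its Cartesian difference counterpart. First I would note that \ref{CdCax1}, \ref{CdCax3}, \ref{CdCax4}, \ref{CdCax5}, and \ref{CdCax7} are already literally identical to \ref{CDCax1}, \ref{CDCax3}, \ref{CDCax4}, \ref{CDCax5}, and \ref{CDCax7}, so those transfer immediately (the extra clause $\dd[\varepsilon(f)] = \varepsilon(\dd[f])$ in \ref{CdCax1} simply reads $\dd[0] = 0$, which is harmless). The substantive cases are \ref{CDCax2} and \ref{CDCax6}. For \ref{CDCax2}, I would take \ref{CdCax2}, namely $\dd[f] \circ \langle x, y+z\rangle = \dd[f] \circ \langle x, y\rangle + \dd[f] \circ \langle x + \varepsilon(y), z\rangle$, and use that $\varepsilon(y) = 0$ on the $\varepsilon$-vanishing domain to reduce the second summand to $\dd[f] \circ \langle x, z\rangle$, yielding exactly the additivity required by \ref{CDCax2}; the zero clause $\dd[f] \circ \langle x, 0\rangle = 0$ is already shared. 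For \ref{CDCax6}, I would start from \ref{CdCax6}, $\dd[\dd[f]] \circ \langle\langle x, y\rangle, \langle 0, z\rangle\rangle = \dd[f] \circ \langle x + \varepsilon(y), z\rangle$, and again apply $\varepsilon(y) = 0$ to obtain $\dd[f] \circ \langle x, z\rangle$, which is precisely \ref{CDCax6}.

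The main thing to be careful about, rather than a genuine obstacle, is confirming that the $\varepsilon$-collapse argument applies at every place it is needed, including to higher-order terms such as $\dd[\dd[f]]$: one must check that the relevant intermediate objects are themselves $\varepsilon$-vanishing so that Lemma~\ref{lem:ep-van-prod}(iii) legitimately forces $\varepsilon$ to vanish there. Since $\mathbb{X}_{\varepsilon\text{-van}}$ is closed under products by Lemma~\ref{lem:ep-van-prod}(ii), all the iterated products $A \times A$, $(A \times A) \times (A \times A)$, and so forth remain in the subcategory, so every hom-set involved consists of maps with $\varepsilon$-vanishing codomain and the collapse is uniform. I would therefore conclude that $\dd$ satisfies all of \ref{CDCax1}–\ref{CDCax7} on $\mathbb{X}_{\varepsilon\text{-van}}$, making it a Cartesian differential category; this also neatly mirrors Proposition~\ref{CDtoCd}, which is the converse construction.
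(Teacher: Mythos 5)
Your proposal is correct and follows essentially the same route as the paper: both rest on Lemma~\ref{lem:ep-van-prod} (in particular part (iii), forcing $\varepsilon(f)=0$ on every hom-set of $\mathbb{X}_{\varepsilon\text{-van}}$, including those with iterated-product domains), after which the Cartesian difference axioms specialize to the Cartesian differential axioms. You merely spell out the axiom-by-axiom reduction --- notably how \ref{CdCax2} and \ref{CdCax6} collapse to \ref{CDCax2} and \ref{CDCax6} --- which the paper's proof states in one compressed sentence.
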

\begin{proof} In Lemma~\ref{lem:ep-van-prod} we already explained why $\mathbb{X}_{\varepsilon\text{-van}}$ is a Cartesian left additive category. So it remains to explain why $\dd$ is a differential combinator. However, by Lemma~\ref{lem:ep-van-prod}.iii, every map in $\mathbb{X}_{\varepsilon\text{-van}}$ satisfies $\varepsilon(f) =0$. Therefore, for maps between objects in $\mathbb{X}_{\varepsilon\text{-van}}$, the Cartesian difference axioms are precisely the Cartesian differential axioms. So we conclude that the difference combinator is a differential combinator for this subcategory, and thus $\mathbb{X}_{\varepsilon\text{-van}}$ is a Cartesian differential category. \hfill
\end{proof}

Note that by Lemma~\ref{lem:ep-van-prod}.i, for any Cartesian difference category $\mathbb{X}$, the terminal object $\top$ is always $\varepsilon$-vanishing, and so therefore, $\mathbb{X}_{\varepsilon\text{-van}}$ is never empty. On the other hand, applying Proposition~\ref{CdtoCD} to a Cartesian differential category results in the entire category, since every object is $\varepsilon$-vanishing by definition. It is also important to note that the above two propositions do not imply that if a difference combinator is a differential combinator then the infinitesimal extension must be zero. In Section~\ref{moduleex}, we provide such an example of a Cartesian differential category that comes equipped with a non-zero infinitesimal extension such that the differential combinator is a difference combinator with respect to this non-zero infinitesimal extension.
%It is also important to mention that Proposition~\ref{CDtoCd} and Proposition~\ref{CdtoCD} result in an adjunction between the category of Cartesian difference categories and the category of Cartesian differential categories. This will be explained in greater detail in an upcoming journal version of this paper.

\subsection{Cartesian Difference Categories as Change Action Models}\label{CdCisCAsec}

In this section, we show how every Cartesian difference category is a particularly well-behaved change action model, and conversely how every change action model contains a Cartesian difference category.

\begin{prop}\label{CdtoCA} Let $\mathbb{X}$ be a Cartesian difference category with infinitesimal extension $\varepsilon$ and difference combinator $\dd$. Define the functor $\alpha: \mathbb{X} \to \mathsf{CAct}(\mathbb{X})$ on objects as $\alpha(A) = (A, A, \oplus_A, +_A, 0_A)$ (as defined in Lemma~\ref{caelem}) and on maps as $\alpha(f) = (f, \dd[f])$. Then $(\mathbb{X}, \alpha : \mathbb{X} \to \mathsf{CAct}(\mathbb{X}))$ is a change action model.
\end{prop}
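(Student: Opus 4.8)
The plan is to verify, in turn, the three defining properties of a change action model: that $\alpha$ is a well-defined functor, that it is a section of the forgetful functor $\mathcal{E}$, and that it preserves finite products. Most of these reduce to direct translations of the Cartesian difference axioms into change-action notation, so I expect the only genuine computation to lie in product preservation.

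First I would check that $\alpha$ is well-defined. On objects this is exactly Lemma~\ref{caelem}, which establishes that $(A, A, \oplus_A, +_A, 0_A)$ is a change action. On maps I must show that $\dd[f]$ is a derivative of $f$ in the sense of Definition~\ref{def:derivative}, i.e. that it satisfies~\ref{CADax1} and~\ref{CADax2}. But as already observed after the definition of a Cartesian difference category, rewriting~\ref{CdCax0} and~\ref{CdCax2} using $f \oplus g = f + \varepsilon(g)$ and $+_{\cact{A}} = +$ yields precisely~\ref{CADax1} and~\ref{CADax2}; hence $(f, \dd[f])$ is a legitimate morphism of $\mathsf{CAct}(\mathbb{X})$. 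Functoriality is then immediate: by~\ref{CdCax3} we have $\dd[1_A] = \pi_1$, so $\alpha(1_A) = (1_A, \pi_1)$ is the identity of $\mathsf{CAct}(\mathbb{X})$, and by the chain rule~\ref{CdCax5} we have $\dd[g \circ f] = \dd[g] \circ \langle f \circ \pi_0, \dd[f] \rangle$, which is exactly the composition of $(f, \dd[f])$ and $(g, \dd[g])$ in $\mathsf{CAct}(\mathbb{X})$. That $\alpha$ is a section of $\mathcal{E}$ follows directly from the definitions, since $\mathcal{E}$ forgets the change-action data and the derivative, leaving $\mathcal{E}(\alpha(A)) = A$ and $\mathcal{E}(\alpha(f)) = f$.

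The substantive step is product preservation. Since $\mathcal{E} \circ \alpha = 1_{\mathbb{X}}$ and products in $\mathsf{CAct}(\mathbb{X})$ lie over products in $\mathbb{X}$, it suffices to verify that the canonical change action $\alpha(A \times B)$ of Lemma~\ref{caelem} coincides with the product change action $\alpha(A) \times \alpha(B)$ computed in $\mathsf{CAct}(\mathbb{X})$, that $\alpha(\top)$ is the terminal change action, and that $\alpha$ carries the chosen projections and pairings of $\mathbb{X}$ to those of $\mathsf{CAct}(\mathbb{X})$. Both change actions on $A \times B$ have underlying object and change object $A \times B$, so I compare their $\oplus$, $+$, and $0$ maps. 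The $+$ and $0$ components agree by Lemma~\ref{lem:CACTprod.plus} and Lemma~\ref{lem:CACTprod.0}, together with $+_{A \times B} = \pi_0 + \pi_1$ and $0_{A \times B} = 0$. For the action, Lemma~\ref{lem:CACTprod.oplus} gives $\langle a, b\rangle \oplus_{\cact{A \times B}} \langle u, v\rangle = \langle a \oplus_A u, b \oplus_B v\rangle = \langle a + \varepsilon(u), b + \varepsilon(v)\rangle$, whereas the canonical action on $A \times B$ sends this pair to $\langle a, b\rangle + \varepsilon(\langle u, v\rangle)$; these agree precisely because $\varepsilon(\langle u, v\rangle) = \langle \varepsilon(u), \varepsilon(v)\rangle$, which is Lemma~\ref{lem:ep-pair}. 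Terminality of $\alpha(\top)$ is clear since $\top$ is terminal in $\mathbb{X}$. Finally, by~\ref{CdCax3} we have $\dd[\pi_i] = \pi_i \circ \pi_1$, so $\alpha(\pi_i) = (\pi_i, \pi_i \circ \pi_1)$ is the projection of $\mathsf{CAct}(\mathbb{X})$, and by~\ref{CdCax4}, $\dd[\langle f, g\rangle] = \langle \dd[f], \dd[g]\rangle$, so $\alpha$ commutes with pairing. The main obstacle is thus not any single hard identity but the bookkeeping of this product comparison: the compatibility of the infinitesimal extension $\varepsilon$ with the product structure (Lemma~\ref{lem:ep-pair}) is exactly what forces the two actions $\oplus_{\cact{A \times B}}$ to coincide, and once this is in hand the remaining verifications are routine, so that $(\mathbb{X}, \alpha)$ is a change action model.
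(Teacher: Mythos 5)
Your proof is correct and follows essentially the same route as the paper's: you obtain well-definedness on maps by observing that~\ref{CdCax0} and~\ref{CdCax2} are precisely~\ref{CADax1} and~\ref{CADax2}, functoriality from~\ref{CdCax3} and~\ref{CdCax5}, and preservation of projections and pairings from~\ref{CdCax3} and~\ref{CdCax4}, exactly as the paper does. The one place you go beyond the paper is the explicit object-level comparison showing that $\alpha(A \times B)$ coincides with the product change action $\alpha(A) \times \alpha(B)$ in $\mathsf{CAct}(\mathbb{X})$ via Lemma~\ref{lem:ep-pair}; the paper leaves this implicit and verifies only the morphism-level data, so your additional check is sound and, if anything, makes the product-preservation claim more complete.
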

\begin{proof} By Lemma~\ref{caelem}, $(A, A, \oplus_A, +_A, 0_A)$ is a change action and so $\alpha$ is well-defined on objects. While for a map $f$, $\dd[f]$ is a derivative of $f$ in the change action sense since~\ref{CdCax0} and~\ref{CdCax2} are precisely~\ref{CADax1} and~\ref{CADax2}, and so $\alpha$ is well-defined on maps.
Next we show that $\alpha$ preserves identities and composition, which  follows from~\ref{CdCax3} and~\ref{CdCax5} respectively:
\begin{align*}
\alpha(1_A) &=~ (1_A, \dd[1_A]) \\
&=~ (1_A, \pi_1) \tag*{\ref{CdCax3}} \\ \\
\alpha(g \circ f) &=~(g \circ f, \dd[g \circ f]) \\
&=~ \left( g \circ f, \dd[g] \circ \langle f \circ \pi_0, \dd[f] \rangle \right) \tag*{\ref{CdCax5}} \\
&=~(g, \dd[g]) \circ (f, \dd[g]) \tag{by def. of comp. in $\mathsf{CAct}(\mathbb{X})$} \\ % chktex 36
&=~\alpha(g) \circ \alpha(f)
\end{align*}
So $\alpha$ is a functor. Next we check that $\alpha$ preserves projections and pairings, which will follow from~\ref{CdCax3} and~\ref{CdCax4}:
\begin{align*}
\alpha(\pi_i) &=~ (\pi_i, \dd[\pi_i]) \\
&=~ (\pi_i, \pi_i \circ \pi_1) \tag*{\ref{CdCax3}} \\ \\
\alpha(\pair{f}{g}) &=~ (\pair{f}{g}, \dd[\pair{f}{g}]) \\
&=~ (\pair{f}{g}, \pair{\dd[f]}{\dd[g]} ) \tag*{\ref{CdCax4}} \\
&=~ \pair{(f,\dd[f])}{(g, \dd[g])} \\
&=~ \pair{\alpha(f)}{\alpha(g)}
\end{align*}
So $\alpha$ preserves finite products. Lastly, it is clear that $\alpha$ is a section of the forgetful functor, and therefore we conclude that $(\mathbb{X}, \alpha)$ is a change action model.
\hfill
\end{proof}

Not every change action model is a Cartesian difference category. For example, change action models do not require the addition to be commutative. On the other hand, it can be shown that every change action model contains a Cartesian difference category as a full subcategory. We call the objects and maps in this subcategory \emph{flat}. It is important to note that the definition here is a slight generalisation of the one in~\cite{alvarez2020cartesian}. The original suggested definition implied that $\oplus = +$ and as a result $\varepsilon(f) = f$. While this indeed resulted in a Cartesian difference category (specifically one that generalized the Abelian group example from Section~\ref{discreteex}), we found that it was a bit too restrictive of a construction. We correct this by generalizing the definition slightly, while still keeping the construction's overall core ideas the same.

\begin{defi}\label{def:flat}
    Let $(\mathbb{X}, \alpha : \mathbb{X} \to \mathsf{CAct}(\mathbb{X}))$ be a change action model. An object $A$ is \textbf{flat} whenever the following equations hold:
    \begin{enumerate}[{\bf [F.1]},ref={{\bf [F.{\arabic*}]}},align=left]
    \item $\Delta A = A$\label{F1}
    \item $\alpha(\oplus_A) = (\oplus_A, \oplus_A \circ \pi_1)$ and $\alpha(+_A) = (+_A, +_A \circ \pi_1)$\label{F2}
    %\item $0 \oplus_A (0 \oplus_A f) = 0 \oplus_A f$ for any $f : U \to A$
    %    \label{F3}
    \item $\oplus_A$ is right-injective, that is, if $\oplus_A \circ \langle f, g \rangle = \oplus_A \circ \langle f, h \rangle$ then $g=h$.\label{F4}
    \end{enumerate}
Let $f : A \to B$ be a map between flat objects in the above change action model, with $\alpha(f) = (f, \dd[f])$. Then $f$ is \textbf{flat}
    whenever the following condition holds:
    \begin{enumerate}[{\bf [F.1]},ref={{\bf [F.{\arabic*}]}},align=left]
        \setcounter{enumi}{3}
        \item $0_B \oplus_B \partial[f] \circ \pair{x}{y} = \partial[f] \circ \pair{x}{0_A \oplus_A y}$\label{Fd}
    \end{enumerate}
    We define $\mathsf{Flat}_\alpha$ to be the subcategory of $\mathbb{X}$ containing all flat objects and all flat morphisms
    between them.
\end{defi}
Intuitively, the condition~\ref{F1} simply states that a flat object is similar to an Euclidean space
in the sense that its ``tangent space'' is equal to itself.~\ref{F2} can be read as stating that
the action $\oplus_A$ and the addition of changes $+_A$ are linear. In this sense, flat objects behave like Euclidean spaces.
%\ref{F3} says that $0 \oplus_A -$ is idempotent, which will result in an idempotent infinitesimal extension.
\ref{F4} tells us that we may cancel the left argument of $\oplus_A$, which will imply that the result infinitesimal extension is injective. This cancellation ability that~\ref{F4} provides will be used throughout most of the proofs in this section. Lastly, as we will see below,~\ref{Fd} is a generalization of Lemma~\ref{lem:d-epsilon-i}. % chktex 2

We would like to show that for any change action model $(\mathbb{X}, \alpha)$, $\mathsf{Flat}_\alpha$ is a Cartesian difference category. We first explain the finite product structure of $\mathsf{Flat}_\alpha$. % First we remark that every linear map (i.e. those maps $f$ satisfying $f = \dd[f] \circ \pi_1$) is flat.

\begin{lem}  Let $(\mathbb{X}, \alpha : \mathbb{X} \to \mathsf{CAct}(\mathbb{X}))$ be a change action model. Then:
 \begin{enumerate}[(\roman{enumi}).,ref={\thelem.\roman{enumi}}]
\item The terminal object $\top$ is flat;
\item If $A$ and $B$ are flat then their product $A \times B$ is flat;
\item If $A$ is a flat object, then the identity $1_A: A \to A$ is flat;
\item If $A$, $B$, and $C$ are flat objects, and $f: A \to B$ and $g: B \to C$ are flat maps, then their composite $g \circ f: A \to C$ is flat;
%\item If $A$ is a flat object, then the action $\oplus_A: A \times A \to A$ and addition $+_A: A \times A \to A$ are flat;
%\item If $A$ and $B$ are flat objects, then the zero map $0: A \to B$ is flat;
\item If $A$ and $B$ are flat objects, then the projection maps $\pi_0: A \times B \to A$ and ${\pi_1: A \times B \to B}$ are flat;
\item If $A$, $B$, and $C$ are flat objects, and $f: C \to A$ and $g: C \to B$ are flat maps, then their pairing $\langle f, g \rangle: C \to A \times B$ is flat.
  \end{enumerate}
Furthermore, $\mathsf{Flat}_\alpha$ is a Cartesian category with the same Cartesian structure as $\mathbb{X}$.
\end{lem}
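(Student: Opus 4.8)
The plan is to verify, construction by construction, the defining flatness conditions, relying throughout on three facts: that $\alpha$ is product-preserving (so it sends the interchange, projection and pairing maps of $\mathbb{X}$ to the corresponding structural maps of $\mathsf{CAct}(\mathbb{X})$), the explicit descriptions of identities, composition, projections and pairings in $\mathsf{CAct}(\mathbb{X})$, and the compatibility of the change-action structure with products recorded in Lemma~\ref{lem:CACTprod}, together with the arithmetic of $\oplus$ from Lemma~\ref{lem:CAadd}. The statements split cleanly: for the two object claims (i) and (ii) I must check~\ref{F1},~\ref{F2} and~\ref{F4}, whereas for the four morphism claims (iii)--(vi) I only need the single condition~\ref{Fd}.

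The morphism cases are short manipulations with derivatives. For the identity $\dd[1_A] = \pi_1$, so both sides of~\ref{Fd} collapse to $0_A \oplus_A y$. For the projections $\dd[\pi_i] = \pi_i \circ \pi_1$, and~\ref{Fd} follows after expanding $0_{A \times B} \oplus_{A \times B} y$ componentwise via Lemma~\ref{lem:CACTprod.oplus} and Lemma~\ref{lem:CACTprod.0}. The representative case is the composite (iv): writing $\dd[g \circ f] = \dd[g] \circ \pair{f \circ \pi_0}{\dd[f]}$, I would push $0_C \oplus_C (-)$ inward using~\ref{Fd} for $g$ and then~\ref{Fd} for $f$, which reassembles into $\dd[g \circ f] \circ \pair{x}{0_A \oplus_A y}$. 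The pairing case (vi) is entirely analogous, using $\dd[\pair{f}{g}] = \pair{\dd[f]}{\dd[g]}$ together with the componentwise descriptions of $\oplus_{A \times B}$ and $0_{A \times B}$.

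For the object statements, the terminal object (i) is immediate, since every map into $\top$ is unique, making~\ref{F1},~\ref{F2} and~\ref{F4} hold trivially. For a product (ii), condition~\ref{F1} holds because $\alpha$ preserves products, so $\Delta(A \times B) = \Delta A \times \Delta B = A \times B$, and~\ref{F4} follows from~\ref{F4} for $A$ and $B$ after splitting $\oplus_{A \times B}$ componentwise via Lemma~\ref{lem:CACTprod.oplus}. The crux, and the step I expect to be the main obstacle, is~\ref{F2}, namely $\dd[\oplus_{A \times B}] = \oplus_{A \times B} \circ \pi_1$ and the analogous identity for $+_{A \times B}$. The cleanest route is to isolate the class of maps $h$ satisfying $\dd[h] = h \circ \pi_1$ and to check, using the composition, projection and pairing formulas of $\mathsf{CAct}(\mathbb{X})$, that this class is closed under composition, binary products, projections and pairings. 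Condition~\ref{F2} for $A$ and $B$ places $\oplus_A, \oplus_B, +_A, +_B$ in this class, and factoring $\oplus_{A \times B} = (\oplus_A \times \oplus_B) \circ c$ through the interchange isomorphism $c$ (itself a pairing of projections, hence in the class) then yields~\ref{F2} for $A \times B$; the same factorization handles $+_{A \times B}$.

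Finally, to conclude that $\mathsf{Flat}_\alpha$ is a Cartesian category with the inherited structure, I would observe that (iii) and (iv) make it a genuine subcategory, while (i), (ii), (v) and (vi) supply a terminal object and binary products. Every terminal map $!_A \colon A \to \top$ is flat, condition~\ref{Fd} being vacuous since its two sides are parallel maps into $\Delta \top = \top$ and hence equal, so $\top$ remains terminal in $\mathsf{Flat}_\alpha$. Since the projections are flat, pairings of flat maps are flat, and both the defining equations $\pi_0 \circ \pair{f}{g} = f$, $\pi_1 \circ \pair{f}{g} = g$ and the uniqueness of the pairing are inherited verbatim from $\mathbb{X}$, the product $A \times B$ of $\mathbb{X}$ with its flat projections is again a product in $\mathsf{Flat}_\alpha$, completing the argument.
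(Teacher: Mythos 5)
Your proposal is correct and takes essentially the same approach as the paper, whose proof is left as an exercise with exactly the hints you follow: product preservation of $\alpha$ yields (i), (ii), (v), and (vi), while functoriality of $\alpha$ yields (iii) and (iv), with Lemma~\ref{lem:CAadd} and Lemma~\ref{lem:CACTprod} supplying the componentwise computations. Your write-up simply fills in the details the paper omits, including the closure-under-composition-and-pairing argument that establishes~\ref{F2} for products.
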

\begin{proof} These are straightforward and we leave it as an exercise for the reader to check for themselves. (i), (ii), (v), and (vi)  follows from the fact that $\alpha$ preserves finite products, while (iii) and (v) follows from the functoriality of $\alpha$.
\end{proof}

As an immediate consequence, we note that for any change action model $(\mathbb{X}, \alpha)$, since the terminal object is always flat, $\mathsf{Flat}_\alpha$ is never empty. Next we discuss the additive structure of $\mathsf{Flat}_\alpha$. The sum of maps $f: A \to B$ and $g: A \to B$ in $\mathsf{Flat}_\alpha$ is defined using the change action structure $f+g := f +_B g$, while the zero map $0: A \to B$ is $0 := 0_B \circ{} !_A$. And so we obtain that:

\begin{lem}\label{EUCLCLAC} Let $(\mathbb{X}, \alpha : \mathbb{X} \to \mathsf{CAct}(\mathbb{X}))$ be a change action model. Then:
\begin{enumerate}[(\roman{enumi}).,ref={\thelem.\roman{enumi}}]
\item If $A$ and $B$ are flat objects, then the zero map $0: A \to B$ is flat;
\item If $A$ and $B$ are flat objects, and $f: A \to B$ and $g: A \to B$ are flat morphisms, then their sum $f + g: A \to B$ is flat.
  \end{enumerate}
Furthermore, $\mathsf{Flat}_\alpha$ is a Cartesian left additive category.
\end{lem}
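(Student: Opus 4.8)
The plan is to first extract the ``linear algebra'' of flat objects from the derivative axioms \ref{CADax1} and \ref{CADax2}, and only then verify the two closure statements and assemble the Cartesian left additive structure. Throughout I write $\varepsilon(h) \defeq 0_B \oplus_B h$ for a map $h : C \to B$; this is the infinitesimal extension that $\mathsf{Flat}_\alpha$ will eventually carry, but here it is just convenient shorthand. The key point to keep in mind is that \ref{F2} does not merely say $\oplus_B$ and $+_B$ are differentiable: it pins down their $\alpha$-derivatives to be $\oplus_B \circ \pi_1$ and $+_B \circ \pi_1$, and since these are the second components of maps of $\mathsf{CAct}(\mathbb{X})$, they satisfy \ref{CADax1} and \ref{CADax2}.

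First I would establish three facts about the change action on a flat object $B$. Applying \ref{CADax1} to $+_B$ with derivative $+_B \circ \pi_1$ and evaluating at generalized elements $\pair{a}{b}$, $\pair{u}{v}$ (using Lemma~\ref{lem:CACTprod.oplus}) yields the interchange law $(a \oplus_B u) +_B (b \oplus_B v) = (a +_B b) \oplus_B (u +_B v)$. Specialising $b = u = 0_B$ gives $a \oplus_B v = a +_B \varepsilon(v)$, and specialising $a = b = 0_B$ together with the unit law gives $\varepsilon(u +_B v) = \varepsilon(u) +_B \varepsilon(v)$, so $\varepsilon$ is additive. Next, \ref{F4} (right-injectivity of $\oplus_B$) applied with left argument $0_B$ shows $\varepsilon$ is injective. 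Finally, and this is the conceptually crucial point, commutativity of $+_B$ is forced by regularity: applying \ref{CADax2} to $+_B$ with derivative $+_B \circ \pi_1$ (the first argument drops out since the derivative ignores it) gives the interchange law $(u_1 +_B v_1) +_B (u_2 +_B v_2) = (u_1 +_B u_2) +_B (v_1 +_B v_2)$, and the Eckmann--Hilton substitution $u_1 = v_2 = 0_B$ collapses this to $v_1 +_B u_2 = u_2 +_B v_1$.

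With these in hand the two closure claims are short. For (i), I would show the derivative of the zero map $0 = 0_B \circ{} !_A$ is itself zero: since $\alpha(0)$ is a map of $\mathsf{CAct}(\mathbb{X})$ it satisfies \ref{CADax1}, whose left-hand side $0 \circ (x \oplus y)$ is the constant $0$, so $\varepsilon\bigl(\dd[0] \circ \pair{x}{y}\bigr) = 0 = \varepsilon(0)$, and injectivity of $\varepsilon$ gives $\dd[0] = 0$; then both sides of \ref{Fd} reduce to $0$. For (ii), composition in $\mathsf{CAct}(\mathbb{X})$ (the chain rule) together with $\alpha$ preserving pairings and $\dd[+_B] = +_B \circ \pi_1$ gives $\dd[f + g] = \dd[f] +_B \dd[g]$; substituting this into \ref{Fd} and using additivity of $\varepsilon$ splits the condition into the hypotheses \ref{Fd} for $f$ and for $g$, which hold by assumption.

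For the final assertion I would verify the Cartesian left additive axioms on $\mathsf{Flat}_\alpha$ with sum $f + g \defeq f +_B g = +_B \circ \pair{f}{g}$ and zero $0_B \circ{} !_A$. Associativity and unitality of each hom-monoid are parts (i) and (ii) of Lemma~\ref{lem:CAadd}, commutativity is the Eckmann--Hilton argument above, and the closure claims (i)--(ii) guarantee the monoid operations stay inside $\mathsf{Flat}_\alpha$. Left-compatibility of composition is immediate, $(f +_B g) \circ h = +_B \circ \pair{f}{g} \circ h = (f \circ h) +_B (g \circ h)$ and $0_B \circ{} !_A \circ h = 0_B \circ{} !_C$, and additivity of the projections follows from the description of the product change action in Lemma~\ref{lem:CACTprod}, parts \ref{lem:CACTprod.plus} and \ref{lem:CACTprod.0}. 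I expect the main obstacle to be commutativity: change action models need not have commutative addition, so it must be wrung out of the flatness data, and the non-obvious input is precisely that \ref{F2} makes $+_B \circ \pi_1$ a genuine (hence regular) derivative of $+_B$. A secondary subtlety is that computing $\dd[0]$ cleanly relies on the injectivity of $\varepsilon$, which is exactly what \ref{F4} supplies.
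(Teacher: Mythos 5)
Your proposal is correct, and on the one genuinely non-trivial point --- commutativity of the sum --- it takes a different route from the paper. The paper's proof of this lemma only addresses commutativity explicitly (declaring the rest straightforward), and does so by applying~\ref{CADax1} to the action $\oplus_B$ (whose derivative is $\oplus_B \circ \pi_1$ by~\ref{F2}): it rewrites $0_B \oplus_B (f + g)$ step by step into $0_B \oplus_B (g + f)$ and then cancels the left argument using right-injectivity~\ref{F4}. You instead apply the regularity axiom~\ref{CADax2} to the sum $+_B$ (whose derivative is $+_B \circ \pi_1$ by~\ref{F2}); since this derivative ignores its first argument, regularity collapses to the interchange law $(u_1 +_B u_2) +_B (v_1 +_B v_2) = (u_1 +_B v_1) +_B (u_2 +_B v_2)$, and Eckmann--Hilton finishes the job with no cancellation at all --- so your commutativity argument does not consume~\ref{F4}, which is a mild economy of hypotheses (you still need~\ref{F4}, via injectivity of $\varepsilon(h) = 0_B \oplus_B h$, for your computation of $\dd[0] = 0$). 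A second difference is organizational: you front-load several facts that the paper defers --- the identity $a \oplus_B v = a +_B \varepsilon(v)$ is the paper's Lemma~\ref{oplus-epsilon}, additivity of $\varepsilon$ is part of its infinitesimal-extension lemma, and both $\dd[0] = 0$ and $\dd[f+g] = \dd[f] +_B \dd[g]$ appear only in the proof of Proposition~\ref{CAtoCd}. This makes your treatment of the closure claims (i) and (ii), which the paper leaves implicit in this lemma, fully explicit: flatness of $0$ follows from $\dd[0] = 0$, and flatness of $f + g$ follows by splitting~\ref{Fd} along $\dd[f+g] = \dd[f] +_B \dd[g]$ using additivity of $\varepsilon$, exactly as you say. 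Both approaches are sound; yours derives everything from the regularity of $\dd[+_B]$, while the paper's leans on the action laws for $\oplus_B$ together with~\ref{F4}.
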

\begin{proof} Most of the Cartesian left additive structure is straightforward. However, since the addition is not required to be commutative for arbitrary change actions, we will show that the addition is commutative for flat objects. Using that $\oplus_B$ is an action as in Lemma~\ref{lem:CAadd}, that by~\ref{F2} we have that $\oplus_B \circ \pi_1$ is a derivative for $\oplus_B$, and~\ref{CADax1}, we obtain that:
    \begin{align*}
        0_B \oplus_B (f + g)
        &= (0_B \oplus_B f) \oplus_B g
         \tag{Lemma~\ref{lem:CAadd.+}} \\
             &= (0_B \oplus_B f) \oplus_B (g \oplus_B 0)
          \tag{Lemma~\ref{lem:CAadd.0}} \\
    &= \oplus_B \circ \pair{0_B \oplus_B f}{g \oplus_B 0} \\
    &= \oplus_B \circ \left( \pair{0_B}{g} \oplus_B \pair{f}{0} \right) \tag{Lemma~\ref{lem:CACTprod.oplus}} \\
    &= \left( \oplus_B \circ \pair{0_B}{g} \right) \oplus_B \left(\dd[\oplus_B]\four{0_B}{g}{f}{0} \right)   \tag*{\ref{CADax1}} \\
        &= (0_B \oplus_B g) \oplus_B \dd[\oplus_B]\four{0_B}{g}{f}{0} \\
        &= (0_B \oplus_B g) \oplus_B \left( \oplus_B \circ \pi_1 \circ \four{0_B}{g}{f}{0} \right)  \tag*{\ref{F2}} \\
              &= (0_B \oplus_B g) \oplus_B \left( \oplus_B \circ \pair{f}{0} \right) \\
        &= (0_B \oplus_B g) \oplus_B (f \oplus_B 0)
      \\
        &= (0_B \oplus_B g) \oplus_B f
     \tag{Lemma~\ref{lem:CAadd.0}} \\
        &= 0_B \oplus_B (g + f)
     \tag{Lemma~\ref{lem:CAadd.+}}
    \end{align*}
    By~\ref{F4}, $\oplus_B$ is right-injective and we conclude that $f + g = g + f$. \hfill
\end{proof}

We use the action of the change action structure to define the infinitesimal extension. So for a map $f: A \to B$ in $\mathsf{Flat}_\alpha$, define $\varepsilon(f): A \to B$ as follows:
\[ \varepsilon(f) = \oplus_B \circ \pair{0_B \circ{} !_A}{f} = 0 \oplus_B f \]
As such, we may rewrite~\ref{Fd} as follows:
\[ \dd[f] \circ \pair{x}{\varepsilon(y)} = \varepsilon(\dd[f]) \circ
    \pair{x}{y} \]
which is Lemma~\ref{lem:d-epsilon-i}.

\begin{lem} $\varepsilon$ is an infinitesimal extension for $\mathsf{Flat}_\alpha$.
\end{lem}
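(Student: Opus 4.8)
The plan is to verify the three infinitesimal–extension axioms \ref{Eax1}, \ref{Eax2}, and \ref{Eax3} directly for the operator $\varepsilon(f) = 0 \oplus_B f$. The crux of the argument, from which everything else then follows smoothly, is a preliminary identity expressing the action $\oplus$ in terms of ordinary addition and $\varepsilon$, namely
$f \oplus_B g = f + \varepsilon(g)$
for all suitable parallel maps $f, g$. To establish this I would exploit \ref{F2}, which tells us that $+_B \circ \pi_1$ is a derivative for $+_B$, and feed this into the change-action derivative axiom \ref{CADax1}. Concretely, applying \ref{CADax1} to the map $+_B$ at the pair $x = \pair{f}{0}$ and $y = \pair{0}{g}$, and simplifying $x \oplus y$ via Lemma~\ref{lem:CACTprod.oplus} (together with $f \oplus_B 0 = f$ from Lemma~\ref{lem:CAadd.0} and $0 \oplus_B g = \varepsilon(g)$ by definition), yields on the left $+_B \circ \pair{f}{\varepsilon(g)} = f + \varepsilon(g)$, while the derivative term collapses as $+_B \circ \pi_1 \circ \pair{x}{y} = +_B \circ y = g$, giving on the right $f \oplus_B g$. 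This is exactly the desired identity.

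With this identity in hand, \ref{Eax2} is immediate. For $f : A \to B$ and $g : B \to C$, Lemma~\ref{lem:CAadd.circ} and terminality ($!_B \circ f = {!_A}$) give
$\varepsilon(g) \circ f = (0 \oplus_C g) \circ f = (0 \circ f) \oplus_C (g \circ f) = 0 \oplus_C (g \circ f) = \varepsilon(g \circ f)$.
For \ref{Eax1}, the equation $\varepsilon(0) = 0 \oplus_B 0 = 0$ is just Lemma~\ref{lem:CAadd.0}, while additivity is the short chain
$\varepsilon(g+h) = 0 \oplus_B (g+h) = (0 \oplus_B g) \oplus_B h = (0 \oplus_B g) + \varepsilon(h) = \varepsilon(g) + \varepsilon(h)$,
using the action law Lemma~\ref{lem:CAadd.+} at the second step and the preliminary identity at the third.

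Finally, for \ref{Eax3} I would compute the infinitesimal extension of the identity componentwise. Writing $\varepsilon_C = 0 \oplus_C 1_C$ for each flat object $C$, so that $\varepsilon(f) = \varepsilon_B \circ f$ for $f : A \to B$ (again by Lemma~\ref{lem:CAadd.circ}), I unfold $\varepsilon_{A \times B} = 0 \oplus_{A \times B} 1_{A \times B}$: using $0_{A \times B} = \pair{0_A}{0_B}$ (Lemma~\ref{lem:CACTprod.0}), $1_{A \times B} = \pair{\pi_0}{\pi_1}$, and the product formula for $\oplus$ (Lemma~\ref{lem:CACTprod.oplus}), this becomes $\pair{0 \oplus_A \pi_0}{0 \oplus_B \pi_1} = \pair{\varepsilon_A \circ \pi_0}{\varepsilon_B \circ \pi_1} = \varepsilon_A \times \varepsilon_B$. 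Post-composing with the projections then gives $\varepsilon(\pi_i) = \pi_i \circ \varepsilon_{A \times B}$, which is \ref{Eax3}. Alternatively, having shown each $\varepsilon_A$ additive and $\varepsilon_{A \times B} = \varepsilon_A \times \varepsilon_B$, one may simply invoke Lemma~\ref{lem:epbij} to package all three axioms at once.

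The main obstacle is the preliminary identity $f \oplus_B g = f + \varepsilon(g)$: it is the only place where the flatness hypothesis \ref{F2} (linearity of $+_B$) and the change-action axiom \ref{CADax1} are genuinely used, and getting the bookkeeping of the derivative term right — in particular recognising that $+_B \circ \pi_1$ composed with $\pair{x}{y}$ collapses to $+_B \circ y = g$ — is the one nontrivial computation. Everything afterwards is formal manipulation of the monoid and action laws from Lemma~\ref{lem:CAadd}. It is worth noting that neither \ref{F4} (right-injectivity) nor \ref{Fd} is needed for this lemma; they will instead be required when verifying the difference-combinator axioms later in the construction.
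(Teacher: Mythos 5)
Your proof is correct, but it reorganizes the paper's development in a genuinely different way. The paper proves this lemma \emph{before} establishing the identity $f \oplus_B g = f + \varepsilon(g)$ (its Lemma~\ref{oplus-epsilon}), and consequently it handles the additivity half of~\ref{Eax1} by a cancellation argument: it shows $0_B \oplus_B \varepsilon(f+g) = 0_B \oplus_B (\varepsilon(f) +_B \varepsilon(g))$ using the same ingredients you use --- Lemma~\ref{lem:CAadd}, Lemma~\ref{lem:CACTprod.oplus},~\ref{CADax1} and~\ref{F2} --- and then cancels the common left factor via right-injectivity~\ref{F4}. You instead front-load the identity $f \oplus_B g = f + \varepsilon(g)$ (and your derivation of it coincides with the paper's later proof: instantiate~\ref{CADax1} at $+_B$ with its~\ref{F2}-derivative $+_B \circ \pi_1$, simplifying via Lemma~\ref{lem:CACTprod.oplus} and Lemma~\ref{lem:CAadd.0}), after which additivity follows in one line, $0 \oplus_B (g+h) = (0 \oplus_B g) \oplus_B h = \varepsilon(g) + \varepsilon(h)$, with no appeal to~\ref{F4}. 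The reordering is legitimate and non-circular, since the paper's proof of Lemma~\ref{oplus-epsilon} uses only the definition of $\varepsilon$ together with Lemma~\ref{lem:CAadd}, Lemma~\ref{lem:CACTprod.oplus},~\ref{CADax1} and~\ref{F2}, never the infinitesimal-extension axioms themselves. What your route buys is a sharper dependency analysis: your closing observation that~\ref{F4} is dispensable for this lemma is correct on your route, whereas the paper's own proof does invoke it (the paper needs~\ref{F4} regardless for commutativity of $+$ in Lemma~\ref{EUCLCLAC}, but commutativity is not used here on either route). Your treatments of~\ref{Eax2} (via Lemma~\ref{lem:CAadd.circ}) and of~\ref{Eax3} (componentwise, via Lemma~\ref{lem:CACTprod.0} and Lemma~\ref{lem:CACTprod.oplus}, showing $\varepsilon_{A \times B} = \varepsilon_A \times \varepsilon_B$ and optionally packaging everything through Lemma~\ref{lem:epbij}) match the paper's essentially verbatim.
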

\begin{proof} We show that $\varepsilon$ satisfies~\ref{Eax1},~\ref{Eax2},~\ref{Eax3}. Starting with~\ref{Eax1}, we compute that:
\begin{align*}
\varepsilon(0) &=~ 0 \oplus_B 0 \\
&=~ 0   \tag{Lemma~\ref{lem:CAadd.0}}
\end{align*}
So $\varepsilon(0)= 0$. Next following the same idea as in the proof of Lemma~\ref{EUCLCLAC}, we obtain the following:
    \begin{align*}
0_B \oplus_B \varepsilon(f + g) &= 0_B \oplus_B (0 \oplus_B (f + g))\\
&= 0_B \oplus_B ((0 \oplus_B f) \oplus_B g)  \tag{Lemma~\ref{lem:CAadd.+}} \\
&= 0_B \oplus_B (\varepsilon(f) \oplus_B g)  \tag{Lemma~\ref{lem:CAadd.+}} \\
        &= (0_B \oplus_B 0) \oplus_B (\varepsilon(f) \oplus_B g)  \tag{Lemma~\ref{lem:CAadd.0}} \\
        &= \oplus_B \circ \pair{0_B \oplus_B 0}{\varepsilon(f) \oplus_B g} \\
        &= \oplus_B \circ \left( \pair{0_B}{\varepsilon(f)} \oplus_B \pair{0}{g} \right)  \tag{Lemma~\ref{lem:CACTprod.oplus}} \\
    &= \left( \oplus_B \circ  \pair{0_B}{\varepsilon(f)} \right) \oplus_B \left( \partial[\oplus_B] \circ \four{0_B}{\varepsilon(f)}{0}{g} \right)   \tag*{\ref{CADax1}} \\
    &= \left( 0_B \oplus \varepsilon(f) \right) \oplus_B  \left( \partial[\oplus_B] \circ \four{0_B}{\varepsilon(f)}{0}{g} \right) \\
    &=  \left( 0_B \oplus \varepsilon(f) \right) \oplus_B \left( \oplus_B \circ \pi_1 \circ \four{0_B}{\varepsilon(f)}{0}{g} \right)  \tag*{\ref{F2}}\\
        &=  \left( 0_B \oplus \varepsilon(f) \right) \oplus_B \left( \oplus_B \circ \pair{0}{g} \right)  \tag*{\ref{F2}}\\
&=     \left( 0_B \oplus \varepsilon(f) \right) \oplus_B (0 \oplus_B g) \\
        &= (0_B \oplus_B \varepsilon(f)) \oplus_B \varepsilon(g)\\
        &= 0_B \oplus_B (\varepsilon(f) +_B \varepsilon(g))  \tag{Lemma~\ref{lem:CAadd.+}}
    \end{align*}
    Then by~\ref{F4}, it follows that $\varepsilon(f + g) = \varepsilon(f) + \varepsilon(g)$. So~\ref{Eax1} holds. Next, it is easy to show that $\varepsilon$ is compatible with composition:
    \begin{align*}
        \varepsilon(g \circ f) &= 0 \oplus_C (g \circ f) \\
        &= (0 \circ f) \oplus_C (g \circ f) \\
        &= (0 \oplus_C g) \circ f \tag{Lemma~\ref{lem:CAadd.circ}} \\
        &= \varepsilon(g) \circ f
    \end{align*}
So~\ref{Eax2} holds. Finally, for the projections we compute:
    \begin{align*}
        \varepsilon(\pi_i) &= 0 \oplus \pi_i \\
        &= \oplus_A \circ \pair{0_A \circ !_{A \times B}}{\pi_i}\\
         &= \oplus_A \circ \pair{\pi_i \circ 0_{A \times B} \circ !_{A \times B}}{\pi_i} \tag{Lemma~\ref{lem:CACTprod.0}} \\
        &= \oplus_A \circ (\pi_i \times \pi_i) \circ \pair{0_{A \times B} \circ !_{A \times B}}{1_{A \times B}}\\
        &= \pi_i \circ \oplus_{A \times B} \circ \pair{0_{A \times B} \circ !_{A \times B}}{1_{A \times B}}\\
        &= \pi_i \circ \left( 0 \oplus 1_{A \times B} \right) \\
        &= \pi_i \circ \varepsilon(1_{A \times B})
    \end{align*}
So~\ref{Eax3} holds. Thus we conclude that $\varepsilon$ is an infinitesimal extension. \hfill
\end{proof}

Next, we observe that for a flat object, the action can be expressed in terms of the infinitesimal extension.

\begin{lem}%
\label{oplus-epsilon} For any maps $f, g : A \to B$ in $\mathsf{Flat}_\alpha$, the following equality holds:
  \begin{align*}
    f \oplus_B g = f + \varepsilon(g)
  \end{align*}
\end{lem}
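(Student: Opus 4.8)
The plan is to derive the identity as a single, direct instance of \ref{CADax1} applied to the addition map $+_B$, exploiting the fact that, by flatness of $B$, the derivative of $+_B$ is the ``linear'' one $+_B \circ \pi_1$ (this is exactly \ref{F2}). The idea is that the expression $f \oplus_B g$ is precisely the right-hand side of \ref{CADax1} for $+_B$ evaluated at the base point $\pair{f}{0}$ with input change $\pair{0}{g}$, while the corresponding left-hand side simplifies to $f + \varepsilon(g)$.

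Concretely, I would first record the setup: $+_B : B \times B \to B$ is a map of the change action model, and since products of flat objects are flat we have $\cact{B \times B} \equiv (B \times B, B \times B, \oplus_{B \times B}, +_{B \times B}, 0_{B \times B})$, with $\dd[+_B] = +_B \circ \pi_1$ by \ref{F2}. Then I would instantiate \ref{CADax1} for the map $+_B$ at the point $x = \pair{f}{0}$ and change $y = \pair{0}{g}$, where $0 = 0_B \circ {!_A}$.

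The left-hand side of \ref{CADax1} is $+_B \circ (x \oplus_{\cact{B \times B}} y)$. Computing the product action componentwise via Lemma~\ref{lem:CACTprod.oplus} gives $x \oplus_{\cact{B\times B}} y = \pair{f \oplus_B 0}{0 \oplus_B g} = \pair{f}{\varepsilon(g)}$, where the first component collapses by Lemma~\ref{lem:CAadd.0} and the second is $\varepsilon(g)$ by the definition of the infinitesimal extension; hence the left-hand side equals $f +_B \varepsilon(g) = f + \varepsilon(g)$. The right-hand side of \ref{CADax1} is $(+_B \circ x) \oplus_B (\dd[+_B] \circ \pair{x}{y})$; here $+_B \circ x = f +_B 0 = f$ and $\dd[+_B] \circ \pair{x}{y} = +_B \circ \pi_1 \circ \pair{x}{y} = +_B \circ y = g$, so the right-hand side is exactly $f \oplus_B g$. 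Equating the two sides yields $f \oplus_B g = f + \varepsilon(g)$.

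There is no serious obstacle once this instance is identified; the only care needed is the bookkeeping between the ``outer'' action $\oplus_B$ and the ``inner'' additions $+_B$ and $+_{\cact{B\times B}}$, together with the unit laws from Lemma~\ref{lem:CAadd}. The conceptual key I would flag is the recognition that \ref{F2} makes $+_B$ behave like a linear map, so that \ref{CADax1} for $+_B$ degenerates to precisely the desired relation among $\oplus_B$, $+$, and $\varepsilon$; this is what lets the argument avoid the appeal to right-injectivity \ref{F4} that was required in the preceding commutativity and additivity lemmas.
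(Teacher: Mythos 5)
Your proposal is correct and follows essentially the same route as the paper's proof: both apply \ref{CADax1} to the map $+_B$ at the point $\pair{f}{0}$ with change $\pair{0}{g}$, using Lemma~\ref{lem:CACTprod.oplus} and the unit laws of Lemma~\ref{lem:CAadd} to simplify the product action, and \ref{F2} to replace $\dd[+_B]$ by $+_B \circ \pi_1$. The only difference is presentational --- the paper chains equalities starting from $f + \varepsilon(g)$, whereas you evaluate the two sides of the same instance of \ref{CADax1} separately and equate them --- and your observation that \ref{F4} is not needed here matches the paper, whose proof likewise avoids it.
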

\begin{proof}
    The proof is a straightforward consequence of linearity of addition:
    \begin{align*}
       f + \varepsilon(g) &= f + (0 \oplus_B g) \\
       &= (f \oplus_B 0) + (0 \oplus_B g)
\tag{Lemma~\ref{lem:CAadd.0}}\\
&= +_B \circ \left( \pair{f \oplus_B 0}{0 \oplus_B g} \right) \\
&= +_B \circ \left( \pair{f}{0} \oplus_{B \times B} \pair{0}{g} \right) \tag{Lemma~\ref{lem:CACTprod.oplus}} \\
&= \left(+_B \circ \pair{f}{0} \right) \oplus_B \left( \dd[+_B] \circ \four{f}{0}{0}{g} \right)
        \tag*{\ref{CADax1}} \\
        &= (f + 0) \oplus_B \left( \dd[+_B] \circ \four{f}{0}{0}{g} \right)
        \\
            &= f \oplus_B \left( \dd[+_B] \circ \four{f}{0}{0}{g} \right) \\
        &= f \oplus_B \left( +_B \circ \pi_1 \circ \four{f}{0}{0}{g} \right)
        \tag*{\ref{F2}}
        \\
              &= f \oplus_B \left( +_B \circ \pair{0}{g} \right)
        \\
        &= f \oplus_B (0 + g) \\
        &= f \oplus_B g
    \end{align*}
Thus we conclude that the desired equality holds. \end{proof}

The difference combinator for $\mathsf{Flat}_\alpha$ is defined in the obvious way, that is, $\dd[f]$ is defined as the second component of $\alpha(f)$.

\begin{prop}\label{CAtoCd} Let $(\mathbb{X}, \alpha : \mathbb{X} \to \mathsf{CAct}(\mathbb{X}))$ be a change action model. Then $\mathsf{Flat}_\alpha$ is a Cartesian difference category.
\end{prop}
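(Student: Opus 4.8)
The plan is to equip $\mathsf{Flat}_\alpha$, already shown to be a Cartesian left additive category with infinitesimal extension $\varepsilon$, with the difference combinator sending a flat map $f$ to the second component $\dd[f]$ of $\alpha(f)$, and then to verify~\ref{CdCax0}--\ref{CdCax7}. The structural axioms are immediate: exactly as in Proposition~\ref{CdtoCA}, functoriality and product-preservation of $\alpha$ give~\ref{CdCax3},~\ref{CdCax4}, and~\ref{CdCax5}, while~\ref{CdCax1} follows by computing $\alpha(f+g)$, $\alpha(0)$, and $\alpha(\varepsilon(f))$ through the chain rule together with~\ref{F2} (the derivatives of $+_B$ and $\oplus_B$ being $+_B\circ\pi_1$ and $\oplus_B\circ\pi_1$), in the style of Lemma~\ref{EUCLCLAC}. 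Finally, Lemma~\ref{oplus-epsilon} gives $x\oplus y = x + \varepsilon(y)$, so that~\ref{CdCax0} and~\ref{CdCax2} are literally~\ref{CADax1} and~\ref{CADax2} for $\dd[f]$ rewritten in additive notation.

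Two facts carry the real weight. First, \ref{Fd} is, as the text observes, exactly Lemma~\ref{lem:d-epsilon-i}, $\dd[f]\circ\pair{x}{\varepsilon(y)} = \varepsilon(\dd[f])\circ\pair{x}{y}$, for flat $f$. Second, \ref{F4} says $\oplus_B$ is right-injective, which by Lemma~\ref{oplus-epsilon} means $a + \varepsilon(b) = a + \varepsilon(c)$ forces $b = c$; in particular $\varepsilon$ is injective on flat maps. For~\ref{CdCax6} I would compute $\dd[f]\circ\pair{x}{y + \varepsilon(z)}$ two ways. On one hand, \ref{CdCax0} applied to the map $\dd[f]$ (whose ambient derivative is $\dd\left[\dd[f]\right]$) with base $\pair{x}{y}$ and change $\pair{0}{z}$ gives $\dd[f]\circ\pair{x}{y} + \varepsilon\left(\dd\left[\dd[f]\right]\circ\four{x}{y}{0}{z}\right)$; on the other, \ref{CdCax2} followed by~\ref{Fd} gives $\dd[f]\circ\pair{x}{y} + \varepsilon\left(\dd[f]\circ\pair{x+\varepsilon(y)}{z}\right)$. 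Cancelling the common summand and the outer $\varepsilon$ via~\ref{F4} yields~\ref{CdCax6}.

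The symmetry axiom~\ref{CdCax7} is the first genuine obstacle. I would expand $f\circ(x + \varepsilon(y) + \varepsilon(z))$ by~\ref{CdCax0} under the two groupings $(x+\varepsilon(z))+\varepsilon(y)$ and $(x+\varepsilon(y))+\varepsilon(z)$ and equate the results. Rewriting the mixed terms $\dd[f]\circ\pair{x+\varepsilon(z)}{y}$ and $\dd[f]\circ\pair{x+\varepsilon(y)}{z}$ once more by~\ref{CdCax0} exposes $\dd\left[\dd[f]\right]\circ\four{x}{y}{z}{0}$ and $\dd\left[\dd[f]\right]\circ\four{x}{z}{y}{0}$; after using commutativity (Lemma~\ref{EUCLCLAC}) to align the first-order summands, one is left with an equation of the form $W + \varepsilon^2(P) = W + \varepsilon^2(Q)$, and two applications of the cancellation afforded by~\ref{F4} give $P = Q$, i.e.~\ref{CdCax7}.

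The step I expect to be hardest is checking that $\dd$ actually restricts to $\mathsf{Flat}_\alpha$, i.e. that $\dd[f]$ is again flat, which is~\ref{Fd} at $\dd[f]$. This cannot be obtained merely by quoting Lemma~\ref{lem:d-epsilon-i}, since its proof for $\dd[f]$ would invoke~\ref{CdCax6} at $\dd[f]$ and hence~\ref{Fd} at $\dd[f]$ again. The observation that breaks the circularity is that $\varepsilon(f)$ and $\varepsilon^2(f)$ are themselves flat whenever $f$ is (immediate from~\ref{CdCax1} and~\ref{Fd} for $f$), so that Lemmas~\ref{lem:d-epsilon-i},~\ref{lem:d-epsilon-ii}, and~\ref{lem:d-epsilon-iii} are all available for $f$; and that, since~\ref{F4} makes $\varepsilon$ injective, the $\varepsilon^2$-versus-$\varepsilon^3$ identity of Lemma~\ref{lem:d-epsilon-iii} cancels down to~(\ref{lem:d-epsilon-iii'}) and once further to $\dd\left[\dd[f]\right]\circ\four{x}{y}{z}{0} = \varepsilon\left(\dd\left[\dd[f]\right]\right)\circ\four{x}{y}{z}{0}$. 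Equipped with this absorption identity, \ref{Fd} at $\dd[f]$ follows by decomposing its change argument with~\ref{CADax2} and treating each half with~\ref{CdCax6} and~\ref{Fd} for $f$; keeping the base-point shifts produced by~\ref{CADax2} in order is where I would concentrate the bookkeeping.
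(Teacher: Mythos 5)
Your verification of the axioms themselves is sound and runs close to the paper's own proof: the structural axioms \ref{CdCax1}, \ref{CdCax3}--\ref{CdCax5} are handled the same way, \ref{CdCax0} and \ref{CdCax2} are \ref{CADax1} and \ref{CADax2} via Lemma~\ref{oplus-epsilon}, and where the paper proves the equivalent pair \ref{CdCax6a}/\ref{CdCax7a} (then invokes Lemma~\ref{lem:cdc6a}), you prove \ref{CdCax6} and \ref{CdCax7} directly by expanding $\dd[f]\circ\pair{x}{y+\varepsilon(z)}$, resp.\ $f\circ(x+\varepsilon(y)+\varepsilon(z))$, in two ways and cancelling through $\oplus$ with \ref{F4}; this is the same engine (\ref{Fd}, Lemma~\ref{oplus-epsilon}, commutativity from Lemma~\ref{EUCLCLAC}, \ref{F4}-cancellation) aimed at the original forms of the axioms, and both \ref{CdCax0} applied at $\dd[f]$ and the double-cancellation count are legitimate.

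The genuine gap is in your final paragraph, on closure of flat maps under $\dd$ --- a point which, to be fair, the paper's own proof passes over in silence, so you deserve credit for raising it. You correctly diagnose the circularity (\ref{Fd} at $\dd[f]$ \emph{is} Lemma~\ref{lem:d-epsilon-i} at $\dd[f]$, whose proof needs \ref{CdCax6a} at $\dd[f]$, hence \ref{Fd} at $\dd[f]$ again), but your proposed break of the circle does not work. Flatness of $\varepsilon(f)$ and $\varepsilon^2(f)$ does \emph{not} make Lemmas~\ref{lem:d-epsilon-ii} and~\ref{lem:d-epsilon-iii} available: in the paper's proof of Lemma~\ref{lem:d-epsilon-ii}, the step $\varepsilon\left(\dd\left[\dd[f]\right]\right)\circ\four{x}{0}{y}{z} = \dd\left[\dd[f]\right]\circ\pair{\pair{x}{0}}{\varepsilon(\pair{y}{z})}$ is Lemma~\ref{lem:d-epsilon-i} applied to the map $\dd[f]$, and the proof of Lemma~\ref{lem:d-epsilon-iii} likewise applies it to $\dd[\varepsilon(f)] = \varepsilon(\dd[f])$; since \ref{F4} makes $\varepsilon$ injective, flatness of $\varepsilon(\dd[f])$ is equivalent to flatness of $\dd[f]$, so both invocations presuppose exactly the statement you are trying to prove. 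Moreover, flatness of $\varepsilon^k(f)$ is literally $\varepsilon^k$ applied to \ref{Fd} at $f$, so it adds no deductive power beyond flatness of $f$.

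What your toolkit does yield are the two special cases of \ref{Fd} at $\dd[f]$: for changes of the form $\pair{0}{w}$ it follows from \ref{CdCax6} and \ref{Fd} at $f$, and for changes $\pair{z}{0}$ one can derive $\dd\left[\dd[f]\right]\circ\four{x}{y}{\varepsilon(z)}{0} = \varepsilon\left(\dd\left[\dd[f]\right]\circ\four{x}{y}{z}{0}\right)$ by expanding $f\circ\left(x+\varepsilon^2(y)+\varepsilon(z)\right)$ under its two groupings and cancelling twice with \ref{F4}. But when you recombine the halves through \ref{CADax2}, as you propose, the two sides carry base points shifted by $\varepsilon^2(z)$ versus $\varepsilon(z)$, and reconciling them is precisely Lemma~\ref{lem:d-epsilon-ii} at $f$ --- equivalently, after cancellation, the absorption identity obtained from~(\ref{lem:d-epsilon-iii'}) --- which at this stage you have no non-circular proof of. So the ``bookkeeping'' you deferred is not bookkeeping: it is the whole remaining difficulty, and as written the well-definedness of $\dd$ on $\mathsf{Flat}_\alpha$ stays unproven. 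Everything the paper actually proves, you prove; the extra step you added, you do not.
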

\begin{proof}
\ref{CdCax0} and~\ref{CdCax2} are simply a restatement of~\ref{CADax1} and~\ref{CADax2}.~\ref{CdCax3} and~\ref{CdCax4} follow immediately from the fact that % chktex 2
  $\alpha$ preserves finite products and from the structure of products in
  $\mathsf{CAct}(\mathbb{X})$, while~\ref{CdCax5}
  follows from the definition of composition in $\mathsf{CAct}(\mathbb{X})$. So it remains to show that~\ref{CdCax1},~\ref{CdCax6}, and~\ref{CdCax7} hold. We start by proving~\ref{CdCax1}, for which it suffices to calculate and apply~\ref{F2}:
    \begin{align*}
    \dd[f + g] &=~ \dd[+ \circ \pair{f}{g}] \\
    &=~ \dd[+_{B}] \circ \pair{\pair{f}{g} \circ \pi_0}{ \pair{\dd[f]}{\dd[g]}} \tag*{\ref{CdCax5}} \\
    &=~ + \circ \pi_1 \circ \pair{\pair{f}{g} \circ \pi_0}{ \pair{\dd[f]}{\dd[g]}} \tag*{\ref{CdCax2}} \\
    &=~ + \circ \pair{\dd[f]}{\dd[g]} \\
    &=~ \dd[f] + \dd[g]
  \end{align*}
It is not hard to show that $\dd[0] = 0$ and $\dd[\varepsilon(f)] = \varepsilon(\dd[f])$. For the first property, on one hand we have:
 \begin{align*}
    0 \circ \oplus \circ \pair{x}{y} &= 0
    = \oplus \circ \pair{0 \circ x}{0 \circ \pair{x}{y}}
 \end{align*}
 On the other hand, applying~\ref{CADax1}:
 \begin{align*}
    0 \circ \oplus \circ \pair{x}{y}
    = \oplus \circ \pair{0 \circ x}{\dd[0] \circ \pair{x}{y}}
 \end{align*}
 Hence by~\ref{F4} we have $\dd[0] \circ \pair{x}{y} = 0 \circ \pair{x}{y}$ for any choice of $x$ and $y$.
 In particular, $\dd[0] = \dd[0] \circ \pair{\pi_0}{\pi_1} = 0 \circ \pair{\pi_0}{\pi_1} = 0$ as desired.
 For the infinitesimal extension, we simply apply the chain rule and the equation $\dd[0] = 0$:
 \begin{align*}
    \dd[\varepsilon(f)]
    &= \dd[\oplus_B \circ \pair{0}{f}]\\
    &= \dd[\oplus_B] \circ \pair{\pair{0}{f} \circ \pi_0}{\dd[\pair{0}{f}]} \tag*{\ref{CdCax5}}\\
    &= \oplus_B \circ \dd[\pair{0}{f}]
    \tag*{\ref{F2}}
    \\
    &= \oplus_B \circ \pair{\dd[0]}{\dd[f]} \tag*{\ref{CdCax4}} \\
    &= \oplus_B \circ \pair{0}{\dd[f]} \tag{$\dd[0] = 0$}
    \\
    &= \varepsilon(\dd[f])
 \end{align*}
 So we conclude that~\ref{CdCax1} holds.

 We proceed to prove axioms~\ref{CdCax6a} and~\ref{CdCax7a} --- which, per Lemma~\ref{lem:cdc6a},
  is equivalent to, and easier to prove than~\ref{CdCax6} and~\ref{CdCax7}. Starting with~\ref{CdCax6a}, as before, we pick arbitrary
  $x, y : A \to B$ and calculate:
  \begin{align*}
    0 \oplus \dd^2[f] \circ \four{x}{0}{0}{y}
    &= \left( \dd[f] \circ \pair{x}{0} \right) \oplus \left( \dd^2[f] \circ \four{x}{0}{0}{y} \right)
    \tag*{\ref{CdCax2}}
    \\
&=      \dd[f] \circ \left( \pair{x}{0} \oplus \pair{0}{y} \right)  \tag*{\ref{CADax1}} \\
&= \dd[f] \circ \left( \pair{x \oplus 0}{0 \oplus y} \right) \tag{Lemma~\ref{lem:CACTprod.oplus}} \\
    &= \dd[f] \circ \pair{x}{0 \oplus y} \tag{Lemma~\ref{lem:CAadd.0}}\\
    &= 0 \oplus \dd[f] \circ \pair{x}{y}
    \tag*{\ref{Fd}}
  \end{align*}
  Hence by~\ref{F4}, $\dd^2[f]\circ \four{x}{0}{0}{y} = \dd[f] \circ \pair{x}{y}$ as desired.

  Finally, for~\ref{CdCax7a}, first observe that by Lemma~\ref{oplus-epsilon} we can compute the following equality for any suitable maps:
 \begin{align*}
(f \oplus g) \oplus (h \oplus k) &=~  (f +\varepsilon(g) ) \oplus (h + \varepsilon(k) )  \tag{Lemma~\ref{oplus-epsilon}} \\
    &= f +\varepsilon(g)  + \varepsilon\left(h + \varepsilon(k) \right)   \tag{Lemma~\ref{oplus-epsilon}} \\
        &= f +\varepsilon(g)  + \varepsilon(h) + \varepsilon^2(k)    \tag*{\ref{Eax1}} \\
    &=f +\varepsilon(h)  + \varepsilon(g) + \varepsilon^2(k)   \tag{by commutativity of $+$} \\
&=    f +\varepsilon(h)  + \varepsilon\left(g + \varepsilon(k) \right)  \tag*{\ref{Eax1}} \\
&=  (f +\varepsilon(h) ) \oplus (g + \varepsilon(k) )   \tag{Lemma~\ref{oplus-epsilon}} \\
&= (f \oplus h) \oplus (g \oplus k)
 \end{align*}
 Thus we have the following identity:
 \begin{equation}\label{swapoplus}\begin{gathered} (f \oplus g) \oplus (h \oplus k)  = (f \oplus h) \oplus (g \oplus k)
 \end{gathered}\end{equation}
Alternatively, we could have computed the following for any suitable maps:
 \begin{align*}
(f \oplus g) \oplus (h \oplus k) &=~  (f +\varepsilon(g) ) \oplus (h + \varepsilon(k) )  \tag{Lemma~\ref{oplus-epsilon}} \\
    &= f +\varepsilon(g)  + \varepsilon\left(h + \varepsilon(k) \right)   \tag{Lemma~\ref{oplus-epsilon}} \\
        &= f +\varepsilon(g)  + \varepsilon(h) + \varepsilon^2(k)    \tag*{\ref{Eax1}} \\
&= f + \varepsilon\left( g +h + \varepsilon(k) \right)     \tag*{\ref{Eax1}} \\
&= f \oplus \left( g +h + \varepsilon(k) \right)  \tag{Lemma~\ref{oplus-epsilon}} \\
&= f \oplus \left( (g+h) \oplus k \right) \tag{Lemma~\ref{oplus-epsilon}}
 \end{align*}
Thus we also have the following identity:
 \begin{equation}\label{swapoplus2}\begin{gathered} (f \oplus g) \oplus (h \oplus k)  = f \oplus \left( (g+h) \oplus k \right)
 \end{gathered}\end{equation}
Then we compute the following for suitable maps:
  \begin{align*}
&f \circ \left((x \oplus y) \oplus (z \oplus w) \right) = \left( f \circ \left( x \oplus y \right) \right) \oplus \left( \dd[f]\circ \langle x \oplus y, z \oplus w \rangle \right)
    \tag*{\ref{CADax1}} \\
    &= \left( (f \circ x) \oplus \left( \dd[f]\circ \langle x, y  \rangle \right) \right) \oplus \left( \dd[f] \circ \langle x \oplus y, z \oplus w \rangle \right)
    \tag*{\ref{CADax1}}
    \\
&= \left( (f \circ x) \oplus \left( \dd[f]\circ \langle x, y  \rangle \right) \right) \oplus \left( \dd[f] \circ \left( \pair{x}{z} \oplus \pair{y}{w} \right)  \right)
    \\
&= \left( (f \circ x) \oplus \left( \dd[f]\circ \langle x, y  \rangle \right) \right) \oplus \left( \left( \dd[f] \circ \langle x, z \rangle \right) \oplus \left( \dd^2[f] \circ \four{x}{z}{y}{w} \right) \right)   \tag*{\ref{CADax1}} \\
&= (f \circ x) \oplus \left( \left( \left( \dd[f]\circ \langle x, y  \rangle \right) + \left( \dd[f] \circ \langle x, z \rangle \right) \right)  \oplus \left( \dd^2[f] \circ \four{x}{z}{y}{w} \right) \right) \tag{\ref{swapoplus2}}
  \end{align*}
So we have the following identity:
 \begin{equation}\label{swapoplus3}\begin{gathered} f \circ \left((x \oplus y) \oplus (z \oplus w) \right) = \\ (f \circ x) \oplus \left( \left( \left( \dd[f]\circ \langle x, y  \rangle \right) + \left( \dd[f] \circ \langle x, z \rangle \right) \right)  \oplus \left( \dd^2[f] \circ \four{x}{z}{y}{w} \right) \right)
 \end{gathered}\end{equation}
However, by swapping the middle two arguments on the left hand side using (\ref{swapoplus}) we compute:
\begin{align*}
&(f \circ x) \oplus \left( \left( \left( \dd[f]\circ \langle x, y  \rangle \right) + \left( \dd[f] \circ \langle x, z \rangle \right) \right)  \oplus \left( \dd^2[f] \circ \four{x}{z}{y}{w} \right) \right) \\
&= f \circ \left((x \oplus y) \oplus (z \oplus w) \right) \tag{\ref{swapoplus3}} \\
&= f \circ \left((x \oplus z) \oplus (y \oplus w) \right) \tag{\ref{swapoplus}} \\
&=(f \circ x) \oplus \left( \left( \left( \dd[f]\circ \langle x, z  \rangle \right) + \left( \dd[f] \circ \langle x, y \rangle \right) \right)  \oplus \left( \dd^2[f] \circ \four{x}{y}{z}{w} \right) \right)\tag{\ref{swapoplus3}} \\
&= (f \circ x) \oplus \left( \left( \left( \dd[f]\circ \langle x, y  \rangle \right) + \left( \dd[f] \circ \langle x, z \rangle \right) \right)  \oplus \left( \dd^2[f] \circ \four{x}{y}{z}{w} \right) \right) \tag{by com. of $+$}
\end{align*}
So we have that:
\begin{align*}
    &(f \circ x) \oplus \left( \left( \left( \dd[f]\circ \langle x, y  \rangle \right) + \left( \dd[f] \circ \langle x, z \rangle \right) \right)  \oplus \left( \dd^2[f] \circ \four{x}{z}{y}{w} \right) \right) \\
    &=  (f \circ x) \oplus \left( \left( \left( \dd[f]\circ \langle x, y  \rangle \right) + \left( \dd[f] \circ \langle x, z \rangle \right) \right)  \oplus \left( \dd^2[f] \circ \four{x}{y}{z}{w} \right) \right)
\end{align*}
By applying~\ref{F4}, we obtain that:
\begin{align*} &\left( \left( \dd[f]\circ \langle x, y  \rangle \right) + \left( \dd[f] \circ \langle x, z \rangle \right) \right)  \oplus \left( \dd^2[f] \circ \four{x}{z}{y}{w} \right) \\& = \left( \left( \dd[f]\circ \langle x, y  \rangle \right) + \left( \dd[f] \circ \langle x, z \rangle \right) \right)  \oplus \left( \dd^2[f] \circ \four{x}{y}{z}{w} \right)  \end{align*}
By applying~\ref{F4} again we finally obtain $\dd^2[f]\left  \langle\langle x, y \rangle, \langle z, w \rangle \right \rangle= \dd^2[f]\left  \langle\langle x, z \rangle, \langle y, w \rangle \right \rangle$ as desired. So we conclude that $\mathsf{Flat}_\alpha$ is a Cartesian difference category.
    \hfill
\end{proof}

%Once again, it is important to mention that Proposition~\ref{CdtoCA} and Proposition~\ref{CAtoCd} result in an adjunction between the category of change action models and the category of Cartesian difference categories (which we will discuss in the upcoming journal version of this paper).

\subsection{Linear Maps and \texorpdfstring{$\varepsilon$}{Epsilon}-Linear Maps}

An important subclass of maps in a Cartesian differential category is the subclass of \emph{linear maps}~\cite[Definition 2.2.1]{blute2009cartesian}. One can also define linear maps in a Cartesian difference category by using the same definition.

\begin{defi}%
\label{def:linearity} In a Cartesian difference category, a map $f$ is \textbf{linear} if the following equality holds: $\dd[f] = f \circ \pi_1$.
\end{defi}

Using element-like notation, a map $f$ is linear if $\dd[f](x,y) = f(y)$. Linear maps in a Cartesian difference category satisfy many of the same properties found in~\cite[Lemma 2.2.2]{blute2009cartesian}.

\begin{lem}\label{lem:linear} In a Cartesian difference category,
  \begin{enumerate}[(\roman{enumi}),ref={\thelem.\roman{enumi}}]
  \item\label{epsilon-linear} If $f: A \to B$ is linear then $\varepsilon(f) = f \circ
  \varepsilon(1_{A})$.
  \item\label{add-linear} If $f: A \to B$ is linear, then $f$ is additive.
  \item\label{id-linear} Identity maps, projection maps, and zero maps are linear.
  \item\label{comp-linear} The composite, sum, pairing, and product of linear maps are linear.
  \item\label{chain-linear} If $f: A \to B$ and $k: C \to D$ are linear, then for any map $g: B \to
  C$:
  \[ \dd[k \circ g \circ f] = k \circ \dd[g] \circ (f \times f) \]
  \item If an isomorphism is linear, then its inverse is linear.
  \item For any object $A$, $\oplus_A$ and $+_A$ are linear.\label{struct-linear}
\end{enumerate}

\end{lem}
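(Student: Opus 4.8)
The plan is to verify each of the seven clauses by direct computation from the difference-combinator axioms, using linearity $\dd[f] = f \circ \pi_1$ as the working definition; the clauses are largely independent, so I would order them by increasing subtlety rather than by their numbering. I would begin with the base cases (iii): by~\ref{CdCax3} we have $\dd[1_A] = \pi_1 = 1_A \circ \pi_1$, $\dd[\pi_0] = \pi_0 \circ \pi_1$ and $\dd[\pi_1] = \pi_1 \circ \pi_1$, while~\ref{CdCax1} gives $\dd[0] = 0 = 0 \circ \pi_1$, so all four are linear. Then for the closure statement (iv), linearity of sums follows from $\dd[f+g] = \dd[f]+\dd[g]$ (\ref{CdCax1}), of pairings from $\dd[\langle f,g\rangle] = \langle \dd[f],\dd[g]\rangle$ (\ref{CdCax4}), and of composites from the chain rule~\ref{CdCax5}: $\dd[g\circ f] = \dd[g]\circ\langle f\circ\pi_0,\dd[f]\rangle = g\circ\pi_1\circ\langle f\circ\pi_0,\dd[f]\rangle = g\circ\dd[f] = (g\circ f)\circ\pi_1$. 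The product case reduces to pairing and composition once the projections are known to be linear.

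For (ii), I would instantiate~\ref{CdCax2} at a linear $f$: substituting $\dd[f]=f\circ\pi_1$ collapses the identity $\dd[f]\circ\langle x,y+z\rangle = \dd[f]\circ\langle x,y\rangle + \dd[f]\circ\langle x+\varepsilon(y),z\rangle$ to $f\circ(y+z) = f\circ y + f\circ z$, and $\dd[f]\circ\langle x,0\rangle=0$ similarly yields $f\circ 0 = 0$. For (v) I would iterate the chain rule: first $\dd[k\circ g] = \dd[k]\circ\langle g\circ\pi_0,\dd[g]\rangle = k\circ\dd[g]$ using linearity of $k$, then $\dd[k\circ g\circ f] = \dd[k\circ g]\circ\langle f\circ\pi_0,\dd[f]\rangle = k\circ\dd[g]\circ\langle f\circ\pi_0, f\circ\pi_1\rangle = k\circ\dd[g]\circ(f\times f)$, the last step using linearity of $f$. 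Clause (vi) then follows by applying the chain rule to $f\circ f^{-1}=1$: we get $\pi_1 = \dd[f\circ f^{-1}] = \dd[f]\circ\langle f^{-1}\circ\pi_0,\dd[f^{-1}]\rangle = f\circ\dd[f^{-1}]$, so precomposing with $f^{-1}$ gives $\dd[f^{-1}] = f^{-1}\circ\pi_1$.

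The two clauses needing slightly more care are (i) and (vii). For (i) I would invoke Lemma~\ref{lem:d-epsilon-i}: for linear $f$ its left side becomes $f\circ\varepsilon(y)$ and its right side $\varepsilon(f)\circ y$, using~\ref{Eax2} in the form $\varepsilon(f\circ\pi_1)=\varepsilon(f)\circ\pi_1$, so that $f\circ\varepsilon(y) = \varepsilon(f)\circ y$ for all suitable $y$; taking $y=1_A$ yields $\varepsilon(f) = f\circ\varepsilon(1_A)$. For (vii) I would first record the auxiliary observation that $\varepsilon$ preserves linearity: if $f$ is linear then $\dd[\varepsilon(f)] = \varepsilon(\dd[f]) = \varepsilon(f\circ\pi_1) = \varepsilon(f)\circ\pi_1$ by~\ref{CdCax1} and~\ref{Eax2}. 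Since $+_A = \pi_0+\pi_1$ and $\oplus_A = \pi_0 + \varepsilon(\pi_1)$ by Lemma~\ref{caelem}, both are sums of linear maps, with $\varepsilon(\pi_1)$ linear by the observation and $\pi_0,\pi_1$ linear by (iii), hence both are linear by (iv).

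I expect clause (i) to be the main obstacle, since it is the only one that is not a purely formal manipulation of the combinator axioms: it relies instead on the nontrivial interaction between $\varepsilon$ and $\dd$ packaged in Lemma~\ref{lem:d-epsilon-i}, and on the trick of specialising the resulting identity at the identity map. The remaining clauses are essentially bookkeeping with the chain rule and the additivity axioms.
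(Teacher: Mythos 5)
Your proposal is correct in all seven clauses, and at the level of overall strategy it matches the paper: the paper likewise treats (ii)--(vii) as routine axiom-chasing (it dismisses them as ``immediate or admitting a similar proof'' to the differential case) and writes out only part (i) in full. The one place where you genuinely diverge is (i) itself: the paper computes directly, expanding $f \circ \varepsilon(1_A) = f \circ (0 + \varepsilon(1_A))$ via~\ref{CdCax0} at the point $\langle 0, 1_A \rangle$, then using linearity of $f$ to swap $\dd[f]$ and $f \circ \pi_1$ inside the two summands and killing the first summand with~\ref{CdCax2}. You instead specialize the already-established identity $\dd[f] \circ \pair{x}{\varepsilon(y)} = \varepsilon(\dd[f]) \circ \pair{x}{y}$ (Lemma~\ref{lem:d-epsilon-i}) at a linear $f$ and $y = 1_A$, using~\ref{Eax2} to rewrite $\varepsilon(f \circ \pi_1) = \varepsilon(f) \circ \pi_1$. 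Both are valid; yours is the higher-level route (Lemma~\ref{lem:d-epsilon-i} appears before Lemma~\ref{lem:linear} in the paper, so there is no circularity --- its own proof uses~\ref{CdCax0},~\ref{CdCax2}, and~\ref{CdCax6a} but not linearity), and it makes visible the stronger fact $f \circ \varepsilon(y) = \varepsilon(f) \circ y$ for all $y$, of which the paper's statement is the case $y = 1_A$; the paper's computation is more self-contained. Two cosmetic remarks: in (vi) the final step is composition with $f^{-1}$ on the \emph{left} (post-composition in the paper's classical-order convention), not precomposition as you wrote, though the displayed equation $\dd[f^{-1}] = f^{-1} \circ \pi_1$ is exactly right; and your auxiliary observation in (vii) that $\varepsilon$ preserves linearity (via~\ref{CdCax1} and~\ref{Eax2}) is a clean and correct way to handle $\varepsilon(\pi_1)$, after which $\oplus_A = \pi_0 + \varepsilon(\pi_1)$ and $+_A = \pi_0 + \pi_1$ are linear by closure under sums, just as the paper intends.
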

\begin{proof}
  Most of the above results are either immediate or admit a similar proof to the
  ones in~\cite[Lemma~2.2.2]{blute2009cartesian}. We will prove
  i, as it differs from the differential setting:
  \begin{align*}
f \circ \varepsilon(1_{A}) &=~  f \circ (0 + \varepsilon(1_{A})) \\
&=~ f \circ 0 + \varepsilon(\dd[f] \circ \pair{0}{1_A})   \tag*{\ref{CdCax0}}\\
&=~ f \circ \pi_1 \circ \pair{0}{0} + \varepsilon(\dd[f] \circ \pair{0}{1_A}) \\
&=~ \dd[f] \circ \pair{0}{0} + \varepsilon(f \circ \pi_1 \circ \pair{0}{1_A}) \tag{$f$ is linear} \\
&=~ 0 + \varepsilon(f \circ 1_A) \tag*{\ref{CdCax2}} \\
&=~ \varepsilon(f)
  \end{align*}
So $\varepsilon(f) =  f \circ \varepsilon(1_{A})$.
\end{proof}

Using element-like notation, the first point of the above lemma says that if $f$ is linear then $f(\varepsilon(x)) = \varepsilon(f(x))$. It is important to note that while all linear maps are additive, the converse is not necessarily true, see~\cite[Corollary 2.3.4]{blute2009cartesian}. That said, an immediate consequence of the above lemma is that the subcategory of linear maps of a Cartesian difference category has finite biproducts. %We also note a particular omission from~\cite[Lemma 2.2.2]{blute2009cartesian} for an arbitrary Cartesian difference category due to \textbf{[C$\dd$.6]}, which is that $\dd[f] \circ \langle 0, 1_A \rangle$ is no longer necessarily a linear map.

Another interesting subclass of maps is the subclass of $\varepsilon$-linear maps, which are maps whose infinitesimal extension is linear.

\begin{defi} In a Cartesian difference category, a map $f$ is \textbf{$\varepsilon$-linear} if $\varepsilon(f)$ is linear.
\end{defi}

\begin{lem} In a Cartesian difference category,
  \begin{enumerate}[(\roman{enumi}),ref={\thelem.\roman{enumi}}]
\item If $f: A \to B$ is $\varepsilon$-linear then $f \circ (x + \varepsilon(y)) = f \circ x + \varepsilon(f) \circ y$;
    \item Every linear map is $\varepsilon$-linear;
    \item The composite, sum, and pairing of $\varepsilon$-linear maps is $\varepsilon$-linear;
    \item If an isomorphism is $\varepsilon$-linear, then its inverse is again $\varepsilon$-linear.
    \item If $\varepsilon$ is nilpotent (.i.e for every $f$ there exists a $k \in \mathbb{N}$ such that $\varepsilon^k(f)= 0$), then for every map $f$, $\dd[f] \circ \pair{0}{1}$ is linear.\label{nilpotent-linear}
\end{enumerate}
\end{lem}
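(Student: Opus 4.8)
The plan is to handle the five statements in order of increasing difficulty, using throughout the characterisation that $f$ is $\varepsilon$-linear exactly when $\dd[\varepsilon(f)] = \varepsilon(f)\circ\pi_1$, which by~\ref{CdCax1} is the same as $\varepsilon(\dd[f]) = \varepsilon(f)\circ\pi_1$. For (i) I would begin from~\ref{CdCax0}, namely $f\circ(x+\varepsilon(y)) = f\circ x + \varepsilon(\dd[f]\circ\pair{x}{y})$, and rewrite the error term: by~\ref{Eax2} it is $\varepsilon(\dd[f])\circ\pair{x}{y}$, by~\ref{CdCax1} this equals $\dd[\varepsilon(f)]\circ\pair{x}{y}$, and since $\varepsilon(f)$ is linear it collapses to $\varepsilon(f)\circ\pi_1\circ\pair{x}{y} = \varepsilon(f)\circ y$. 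For (ii), if $f$ is linear then $\dd[f] = f\circ\pi_1$, so $\dd[\varepsilon(f)] = \varepsilon(\dd[f]) = \varepsilon(f\circ\pi_1) = \varepsilon(f)\circ\pi_1$ by~\ref{CdCax1} and~\ref{Eax2}, which is precisely linearity of $\varepsilon(f)$. Both are short equational arguments.

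For (iii) and (iv) the organising idea is that $\varepsilon$ passes through linear maps: by Lemma~\ref{epsilon-linear} a linear map $\ell$ satisfies $\varepsilon(\ell) = \ell\circ\varepsilon(1)$, equivalently $\ell$ commutes with the maps $\varepsilon_A$, so that $\varepsilon(\ell\circ h) = \ell\circ\varepsilon(h)$ for every $h$. For the sum and pairing I would push $\varepsilon$ inside via~\ref{Eax1} and Lemma~\ref{lem:ep-pair}, turning $\varepsilon(f+g)$ and $\varepsilon(\pair{f}{g})$ into $\varepsilon(f)+\varepsilon(g)$ and $\pair{\varepsilon(f)}{\varepsilon(g)}$, and then invoke closure of linear maps under sums and pairings (Lemma~\ref{lem:linear}). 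For the composite I would write $\varepsilon(g\circ f) = \varepsilon(g)\circ f$ by~\ref{Eax2}, observe that $\varepsilon(g)$ is linear, and try to prove this composite linear using the chain rule~\ref{CdCax5}, which reduces the goal to the identity $\varepsilon(g)\circ\dd[f] = \varepsilon(g)\circ f\circ\pi_1$; here I would play the two hypotheses against each other, namely that $\varepsilon(g)$ commutes with $\varepsilon$ and that $\varepsilon(\dd[f]) = \varepsilon(f)\circ\pi_1$. Statement (iv) I would obtain by mimicking the proof that the inverse of a linear isomorphism is linear: differentiate $f\circ f^{-1} = 1$ with~\ref{CdCax5}, apply $\varepsilon$, and use $\varepsilon$-linearity of $f$ to solve for $\varepsilon(\dd[f^{-1}])$ and recognise it as $\varepsilon(f^{-1})\circ\pi_1$.

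Finally, (v) is where I expect the genuine work. Writing $L = \dd[f]\circ\pair{0}{1}$, the chain rule~\ref{CdCax5} together with~\ref{CdCax3} and~\ref{CdCax4} gives $\dd[L] = \dd[\dd[f]]\circ\four{0}{\pi_0}{0}{\pi_1}$, and~\ref{CdCax6} rewrites this as $\dd[f]\circ\pair{\varepsilon(\pi_0)}{\pi_1}$; since $L\circ\pi_1 = \dd[f]\circ\pair{0}{\pi_1}$, proving $L$ linear amounts exactly to the base-point-independence identity $\dd[f]\circ\pair{\varepsilon(x)}{y} = \dd[f]\circ\pair{0}{y}$. The strategy is to feed the already-present $\varepsilon$ into Lemma~\ref{lem:d-epsilon-ii}, which trades $\varepsilon(x)$ for $\varepsilon^2(x)$ in the base slot, and to iterate this until nilpotency drives $\varepsilon^k(x)=0$, collapsing the perturbation.

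The main obstacle I anticipate, and which is shared by the composite in (iii), by (iv), and most acutely by (v), is an ``off-by-one'' in the infinitesimal: the clean manipulations (commuting linear maps past $\varepsilon$, and Lemma~\ref{lem:d-epsilon-ii}) only establish the desired equalities after one extra application of $\varepsilon$, exactly the gap discussed around~(\ref{lem:d-epsilon-iii'}). Because hom-sets are merely commutative monoids, there is no subtraction available to cancel a residual term of the form $\varepsilon(E)$ once it is isolated. The delicate point is therefore to arrange the nilpotency telescoping in (v) so that the correction term is annihilated \emph{exactly} rather than only up to a further $\varepsilon$; getting this bookkeeping right, without appealing to injectivity or idempotency of $\varepsilon$, is the step I would spend the most care on.
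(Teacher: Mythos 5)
Your treatments of (i), (ii), and the sum and pairing cases of (iii) are correct, and match what the paper simply leaves as an exercise. For the composite in (iii) and for (iv), however, the method you describe stalls exactly where you predict: commuting $\varepsilon(g)$ past $\varepsilon$ and invoking $\varepsilon(\dd[f]) = \varepsilon(f)\circ\pi_1$ only yields $\varepsilon\bigl(\varepsilon(g)\circ\dd[f]\bigr) = \varepsilon\bigl(\varepsilon(g)\circ f\circ\pi_1\bigr)$, one $\varepsilon$ short of the goal, and with no subtraction this cannot be cancelled. But the gap is closable by a tool you never deploy: Lemma~\ref{lem:d-epsilon-i}, which moves an $\varepsilon$ between the second argument of a derivative and the outside \emph{without} loss. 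For the composite: $\dd[\varepsilon(g \circ f)] = \varepsilon(\dd[g]) \circ \pair{f \circ \pi_0}{\dd[f]}$ (by~\ref{CdCax1},~\ref{CdCax5},~\ref{Eax2}), which by Lemma~\ref{lem:d-epsilon-i} equals $\dd[g] \circ \pair{f \circ \pi_0}{\varepsilon(\dd[f])} = \dd[g] \circ \pair{f \circ \pi_0}{\varepsilon(f \circ \pi_1)}$ using $\varepsilon$-linearity of $f$, and then Lemma~\ref{lem:d-epsilon-i} again in reverse gives $\varepsilon(\dd[g]) \circ \pair{f \circ \pi_0}{f \circ \pi_1} = \varepsilon(g \circ f) \circ \pi_1$; no residual $\varepsilon$ appears. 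A similar double application of Lemma~\ref{lem:d-epsilon-i} to the identity $\dd[f^{-1}] \circ \pair{f \circ a}{\dd[f] \circ \pair{a}{b}} = b$ (differentiate $f^{-1} \circ f = 1$) settles (iv): taking $b = \varepsilon(c)$ yields $\dd[f^{-1}] \circ \pair{x}{\varepsilon(y)} = \varepsilon(f^{-1}) \circ y$, whence $\varepsilon(\dd[f^{-1}]) = \varepsilon(f^{-1}) \circ \pi_1$. Your own plan for (iv), which differentiates $f \circ f^{-1} = 1$ and then applies $\varepsilon$, needs to commute $\varepsilon$ past $f$ itself, which requires $f$ linear rather than merely $\varepsilon$-linear.

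The genuine gap is (v). Your reduction of linearity of $L = \dd[f] \circ \pair{0}{1}$ to the identity $\dd[f] \circ \pair{\varepsilon(x)}{y} = \dd[f] \circ \pair{0}{y}$ is correct, but the plan to iterate Lemma~\ref{lem:d-epsilon-ii} misreads that lemma: it trades $\varepsilon(y)$ for $\varepsilon^2(y)$ in the base slot only underneath an outer $\varepsilon$ applied to $\dd[f]$, and the ``already-present'' $\varepsilon$ in your goal sits inside the base argument --- the wrong position --- so the iteration cannot even start. Stripping that outer $\varepsilon$ is precisely the identity~(\ref{lem:d-epsilon-iii'}), which the paper explicitly states is not known to hold in general; no bookkeeping that avoids injectivity or idempotence of $\varepsilon$ will supply it. Indeed the paper's own proof does not establish plain linearity: it proves $\dd[\varepsilon(\dd[f] \circ \pair{0}{1})] = \varepsilon(\dd[f] \circ \pair{0}{1}) \circ \pi_1$ and concludes that $L$ is $\varepsilon$-\emph{linear} (the word ``linear'' in the statement of (v) is inconsistent with the proof's own conclusion). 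The initial $\varepsilon$ is exactly what makes nilpotency bite on the nose: expanding $\varepsilon\bigl(\dd[f] \circ \pair{\varepsilon(\pi_0)}{\pi_1}\bigr)$ via~\ref{CdCax0} produces the correction term $\varepsilon^2(\dd[\dd[f]]) \circ \four{0}{\pi_1}{\pi_0}{0}$, which has both the $\varepsilon^2$ and the trailing $0$ demanded by Lemma~\ref{lem:d-epsilon-iii}; iterating that lemma pumps the exponent up to $\varepsilon^k$, and then~\ref{CdCax1} plus nilpotency rewrites the whole term as $\dd[\dd[\varepsilon^k(f)]] \circ \four{0}{\pi_1}{\pi_0}{0} = \dd[\dd[0]] \circ \four{0}{\pi_1}{\pi_0}{0} = 0$ --- annihilated exactly, with no leftover $\varepsilon(E)$ to cancel. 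Starting from your unprefixed goal, the correction sits at level $\varepsilon^1$, where Lemma~\ref{lem:d-epsilon-iii} (which only equates $\varepsilon^2$ with $\varepsilon^3$) is silent; the right move is to prove the $\varepsilon$-linear statement, not the literal one.
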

\begin{proof} The first four (i) through (iv) are straightforward and so we leave them as an exercise for the reader.
For (v), suppose that $\varepsilon$ is nilpotent. We need to show that $\dd[\varepsilon(\dd[f] \circ \pair{0}{1})]
  = \varepsilon(\dd[f] \circ \pair{0}{1}) \circ \pi_1$. So we compute:
\begin{align*}
    \dd[\varepsilon(\dd[f] \circ \pair{0}{1})] &=~ \varepsilon\left( \dd\left[ \dd[f] \circ \pair{0}{1} \right] \right) \tag*{\ref{CdCax1}} \\
    &=~ \varepsilon\left( \dd\left[\dd[f] \right] \circ \left( \pair{0}{1} \times \pair{0}{1} \right) \right)\tag{Lemma~\ref{chain-linear}} \\
       &=~ \varepsilon\left( \dd\left[\dd[f] \right] \circ \four{0}{\pi_0}{0}{\pi_1} \right) \\
       &=~ \varepsilon\left(\dd[f] \circ \pair{0 + \varepsilon(\pi_0)}{\pi_1}\right)   \tag*{\ref{CdCax6}} \\
      &=~  \varepsilon\left(\dd[f] \circ \pair{\varepsilon(\pi_0)}{\pi_1}\right) \\
      &=~ \varepsilon\left(\dd[f] \circ \left( \pair{0}{\pi_1}+ \pair{\varepsilon(\pi_0)}{0} \right)\right) \\
            &=~ \varepsilon\left(\dd[f] \circ \left( \pair{0}{\pi_1}+ \pair{\varepsilon(\pi_0)}{\varepsilon(0)} \right)\right) \tag*{\ref{Eax1}} \\
 &=~ \varepsilon\left(\dd[f] \circ \left( \pair{0}{\pi_1}+ \varepsilon(\pair{\pi_0}{0}) \right)\right)  \tag{Lemma~\ref{lem:ep-pair}} \\
  &=~ \varepsilon\left(\dd[f] \circ \pair{0}{\pi_1} + \varepsilon\left( \dd[\dd[f]] \circ \four{0}{\pi_1}{\pi_0}{0} \right) \right)   \tag*{\ref{CdCax0}} \\
&=~ \varepsilon\left(\dd[f] \circ \pair{0}{\pi_1} \right) + \varepsilon^2\left( \dd[\dd[f]] \circ \four{0}{\pi_1}{\pi_0}{0} \right) \tag*{\ref{Eax1}} \\
&=~ \varepsilon\left(\dd[f] \right)  \circ \pair{0}{\pi_1} + \varepsilon^2\left( \dd[\dd[f]] \right)  \circ \four{0}{\pi_1}{\pi_0}{0} \tag*{\ref{Eax2}} \\
&=~ \varepsilon\left(\dd[f] \right)  \circ \pair{0}{\pi_1} + \varepsilon^k\left( \dd[\dd[f]] \right)  \circ \four{0}{\pi_1}{\pi_0}{0} \tag{Iterating Lemma~\ref{lem:d-epsilon-iii}}     \\
&=~ \varepsilon\left(\dd[f] \right)  \circ \pair{0}{\pi_1} + \dd[\dd[\varepsilon^k\left( f \right) ]]  \circ \four{0}{\pi_1}{\pi_0}{0}   \tag*{(\ref{CdCax1})} \\
&=~ \varepsilon\left(\dd[f] \right)  \circ \pair{0}{\pi_1} + \dd[\dd[0]]  \circ \four{0}{\pi_1}{\pi_0}{0}    \tag{$\varepsilon$ is nilpotent} \\
&=~ \varepsilon\left(\dd[f] \right)  \circ \pair{0}{\pi_1} + 0  \circ \four{0}{\pi_1}{\pi_0}{0} \tag*{\ref{CdCax1}} \\
&=~ \varepsilon\left(\dd[f] \right)  \circ \pair{0}{\pi_1} + 0 \\
    &=~ \varepsilon(\dd[f]) \circ \pair{0}{1} \circ \pi_1 \\
&=~ \varepsilon(\dd[f] \circ \pair{0}{1}) \circ \pi_1 \tag*{\ref{Eax2}}
\end{align*}
So we conclude that $\varepsilon(\dd[f]) \circ \pair{0}{1}$ is linear and hence $\dd[f] \circ \pair{0}{1}$ is $\varepsilon$-linear.
\end{proof}

Using element-like notation, the first point of the above lemma says that if $f$ is $\varepsilon$-linear then $f(x + \varepsilon(y)) = f(x) + \varepsilon(f(y))$. So $\varepsilon$-linear maps are additive on ``infinitesimal'' elements (i.e.\ those of the form $\varepsilon(y)$). For a Cartesian differential category, linear maps in the Cartesian difference category sense are precisely the same as the Cartesian differential category sense~\cite[Definition 2.2.1]{blute2009cartesian}, while every map is $\varepsilon$-linear since $\varepsilon =0$.

\section{Examples of Cartesian Difference Categories}\label{EXsec}

\subsection{Smooth Functions}\label{smoothex}

As we have shown in Proposition~\ref{CDtoCd}, every Cartesian differential category is a Cartesian difference category where the infinitesimal extension is zero. As a particular example, we consider the category of real smooth functions
which, as mentioned above, can be considered to be the canonical (and motivating) example of a Cartesian differential category.

\begin{defi}
Let $\mathbb{R}$ denote the set of real numbers. The category $\mathsf{SMOOTH}$ is the category whose objects are Euclidean
spaces $\mathbb{R}^n$ (including the point $\mathbb{R}^0 = \lbrace \ast \rbrace$), and whose maps are smooth functions $F:
\mathbb{R}^n \to \mathbb{R}^m$.
\end{defi}

$\mathsf{SMOOTH}$ is a Cartesian left additive category where the product structure is given by the standard Cartesian product of Euclidean spaces and where the additive structure is defined by point-wise addition, $(F+G)(\vec x) = F(\vec x) + G(\vec x)$ and $0(\vec x) = (0, \hdots, 0)$, where $\vec x \in \mathbb{R}^n$. $\mathsf{SMOOTH}$ is a Cartesian differential category where the differential combinator is defined by the directional derivative of smooth functions. Explicitly, for a smooth function $F: \mathbb{R}^n \to \mathbb{R}^m$, which is in fact a tuple of smooth functions $F= (f_1, \hdots, f_n)$ where $f_i: \mathbb{R}^n \to \mathbb{R}$, $\mathsf{D}[F]: \mathbb{R}^n \times \mathbb{R}^n \to \mathbb{R}^m$ is defined as follows:
\[\mathsf{D}[F]\left(\vec x, \vec y \right) := \left( \sum \limits^n_{i=1} \frac{\partial f_1}{\partial u_i}(\vec x) y_i, \hdots, \sum \limits^n_{i=1} \frac{\partial f_n}{\partial u_i}(\vec x) y_i  \right)\]
where $\vec x = (x_1, \hdots, x_n), \vec y = (y_1, \hdots, y_n) \in \mathbb{R}^n$. Alternatively, $\mathsf{D}[F]$ can also be defined in terms of the Jacobian matrix of $F$. Therefore $\mathsf{SMOOTH}$ is a Cartesian difference category with infinitesimal extesion $\varepsilon =0$ and with difference combinator $\mathsf{D}$. Since $\varepsilon = 0$, the induced action is simply $\vec x \oplus \vec y = \vec x$. A smooth function is linear in the Cartesian difference category sense precisely if it is $\mathbb{R}$-linear in the classical sense, and every smooth function is $\varepsilon$-linear since $\varepsilon(f)=0$ is linear.

\subsection{Calculus of Finite Differences}\label{discreteex}

The calculus of finite differences~\cite{jordan1965calculus, gleich2005finite} is a field which aims to apply methods from
differential calculus to discrete settings. It does so by introducing a ``discrete derivative'' or ``finite difference''
operator $\Delta$ which, when applied to an integer-valued function $f : \mathbb{Z} \to \mathbb{Z}$, gives the function
$\Delta(f)$ defined by
\[
    \Delta(f) (x) = f(x + 1) - f(x)
\]

Here we show that a generalization of the finite difference operator gives an example of a Cartesian
difference category (but \emph{not} a Cartesian differential category). This example was the main motivating example for
developing Cartesian difference categories. The behaviour of the calculus of finite differences, as well as some of its
generalizations (notably, the Boolean differential calculus~\cite{thayse1981boolean, steinbach2009boolean}) is captured by
the category of abelian groups and arbitrary set functions between them, equipped with a suitable Cartesian difference
structure.

\begin{defi}
    $\overline{\mathsf{Ab}}$ is the category whose objects are abelian groups $G$ (where we use additive notation for group structure) and where a map $f: G \to H$ is simply an arbitrary function between them (and therefore does not necessarily preserve the group structure).
\end{defi}

\begin{prop}
    $\overline{\mathsf{Ab}}$ is a Cartesian left additive category where the product structure is given by the standard Cartesian product of sets and where the additive structure is again given by point-wise addition, $(f+g)(x)=f(x) + g(x)$ and $0(x)=0$. Furthermore, $\overline{\mathsf{Ab}}$ is a Cartesian difference category where the infinitesimal extension is given by the identity, that is, $\varepsilon(f)=f$, and and where the difference combinator $\dd$ is defined as follows for a map $f: G \to H$:
 \[\dd[f](x,y) = f(x + y) - f(x)\]
\end{prop}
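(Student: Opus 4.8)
The plan is to verify each axiom in turn, exploiting the fact that with $\varepsilon(f) = f$ the induced action is $x \oplus y = x + \varepsilon(y) = x + y$, so that every condition reduces to an elementary identity in the codomain abelian group.

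First I would dispatch the structural claims. That $\overline{\mathsf{Ab}}$ is a Cartesian left additive category follows directly from the group structure: pointwise addition makes each hom-set a commutative monoid, pre-composition preserves sums because composition of set functions distributes over pointwise addition, and the set-theoretic projections out of a product of abelian groups are group homomorphisms and hence additive. Since we have already observed that setting $\varepsilon(f) = f$ always defines an infinitesimal extension on a Cartesian left additive category, axioms \ref{Eax1}, \ref{Eax2}, and \ref{Eax3} are immediate, and we record that $x \oplus y = x + y$.

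Next I would verify \ref{CdCax0} through \ref{CdCax5}, each of which is a routine element-wise computation. In element notation: \ref{CdCax0} reads $f(x+y) = f(x) + \bigl(f(x+y) - f(x)\bigr)$, which is a tautology and is essentially the definition of $\dd$; \ref{CdCax1} holds because the finite difference of a pointwise sum is the pointwise sum of finite differences, with the $\varepsilon$-clause trivial as $\varepsilon$ is the identity; \ref{CdCax2} is the telescoping identity $f(x+y+z) - f(x) = \bigl(f(x+y)-f(x)\bigr) + \bigl(f(x+y+z)-f(x+y)\bigr)$ together with $f(x+0)-f(x)=0$; \ref{CdCax3} and \ref{CdCax4} are read off the definitions (e.g. $1(x+y)-1(x)=y$); and the chain rule \ref{CdCax5} unwinds to $g(f(x+y)) - g(f(x)) = g\bigl(f(x) + (f(x+y)-f(x))\bigr) - g(f(x))$, which holds on the nose.

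The only genuinely interesting step is the second-order axioms, and here I would invoke Lemma~\ref{lem:cdc6a} to replace \ref{CdCax6} and \ref{CdCax7} by the $\varepsilon$-free variants \ref{CdCax6a} and \ref{CdCax7a}, which are cleaner to check directly. The key preliminary is to expand the second difference explicitly; applying the definition twice yields
\[
    \dd\left[\dd[f]\right](x_1, x_2, u_1, u_2)
    = f(x_1 + x_2 + u_1 + u_2) - f(x_1 + u_1) - f(x_1 + x_2) + f(x_1).
\]
From this formula, \ref{CdCax6a} follows by substituting $(x,0,0,y)$ and cancelling the two copies of $f(x)$ against $-2f(x)$, leaving $f(x+y)-f(x) = \dd[f](x,y)$; and \ref{CdCax7a} follows because swapping the middle two arguments $x_2$ and $u_1$ leaves the four-term expression invariant --- the outer two terms are symmetric and the middle two simply exchange. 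This is precisely the point at which commutativity of the abelian group addition is used. I expect the only mild obstacle to be the bookkeeping in this four-variable expansion; once the explicit formula is in hand, both second-order axioms are one-line consequences. Having established \ref{CdCax0}--\ref{CdCax7}, we conclude that $\overline{\mathsf{Ab}}$ is a Cartesian difference category.
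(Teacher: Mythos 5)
Your proposal is correct and follows essentially the same route as the paper, which states this proposition without a formal proof and only sketches the key elementwise identities for \ref{CdCax0}, \ref{CdCax2} and \ref{CdCax6} in the surrounding discussion. Your explicit four-term expansion of $\dd\left[\dd[f]\right]$ and the reduction of the second-order axioms to \ref{CdCax6a} and \ref{CdCax7a} via Lemma~\ref{lem:cdc6a} soundly fill in the details left to the reader (the paper itself remarks that \ref{CdCax6a} holds in $\overline{\mathsf{Ab}}$).
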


On the other hand, note that $\dd$ is not a differential combinator for $\overline{\mathsf{Ab}}$ since it does not satisfy~\ref{CDCax6} and part of~\ref{CDCax2}. Indeed, since $f$ is not necessarily a group homomorphism,~\ref{CDCax2} fails to
hold as:
\[\dd[f](x,y +z) = f(x + y +z) - f(x)\]
is not necessarily equal to:
\[\dd[f](x,y) + \dd[f](x,z)= f(x+y) - f(x) + f(x+z) - f(x)\]
$\dd$ does satisfy~\ref{CdCax2} and~\ref{CdCax6}, as well as~\ref{CdCax0}.\ which in this case are respectively:
\[\dd[f](x,y+z) = \dd[f](x,y) + \dd[f](x + y, z)\]
\[\dd \left[ \dd[f] \right]\left( (x,y), (0,z) \right) =  \dd[f](x +y, z) \]
\[ f(x + y) = f(x) + \dd[f](x,y) \]
However, as noted in~\cite{FMCS2018}, it is interesting to note that $\dd$ does satisfy~\ref{CDCax1}, the second part of~\ref{CDCax2},~\ref{CDCax3},~\ref{CDCax4},~\ref{CDCax5},~\ref{CDCax7}, and~\ref{CdCax6a}. It is worth emphasizing that in~\cite{FMCS2018}, the goal was to drop the addition and develop a ``non-additive'' version of Cartesian differential
categories, whereas the current presentation keeps the additive structure while suitably relaxing the differential
combinator to a difference combinator.

In $\overline{\mathsf{Ab}}$, since the infinitesimal operator is given by the identity, the induced action is simply the
addition, $x \oplus y = x + y$. On the other hand, the linear maps in $\overline{\mathsf{Ab}}$ are precisely the group
homomorphisms. Indeed, $f$ is linear if $\dd[f](x,y) = f(y)$. But by~\ref{CdCax0} and~\ref{CdCax2}, we get that:
\[f(x + y) = f(x) + \dd[f](x,y)= f(x) + f(y) \quad \quad \quad f(0) =  \dd[f](x,0) = 0 \]
So $f$ is a group homomorphism. Conversely, whenever $f$ is a group homomorphism:
\[\dd[f](x,y) = f(x+y) - f(x) = f(x) + f(y) - f(x) = f(y)\]
So $f$ is linear. Since $\varepsilon(f)=f$, the $\varepsilon$-linear maps are precisely the linear maps.

\subsection{Module Morphisms}\label{moduleex}

Here we provide a simple example of a Cartesian difference category whose difference combinator is also a differential combinator, but where the infinitesimal extension is neither zero nor the identity.

\begin{defi}
    Let $R$ be a commutative semiring. We define the category $\mathsf{MOD}_R$ as the category whose objects are all
    $R$-modules and whose maps are all the $R$-linear maps between them.
\end{defi}

$\mathsf{MOD}_R$ has finite biproducts and is, therefore, a Cartesian additive (in particular, left additive) category.
Every $r \in R$ induces an infinitesimal extension $\varepsilon^r$ defined by scalar multiplication,
$\varepsilon^r(f)(m) = r f(m)$. For any choice of $r$, the category $\mathsf{MOD}_R$ is a Cartesian difference category
with the infinitesimal extension $\varepsilon^r$ and its difference combinator $\dd$ defined as:
\[\dd[f](m,n)=f(n)\]
$R$-linearity of $f$ assures that~\ref{CdCax0} holds, while the remaining Cartesian difference axioms hold trivially. In
fact, $\dd$ is also a differential combinator and therefore $\mathsf{MOD}_R$ is also a Cartesian differential category,
but note that the Cartesian difference structure of $\mathsf{MOD}_R$ is (whenever $R$ is non-zero) different than the
Cartesian difference structure that corresponds to this differential structure. The
induced action is given by $m \oplus n = m + rn$. By definition of $\dd$, every map in $\mathsf{MOD}_R$ is linear, and by
definition of $\varepsilon^r$ and $R$-linearity, every map is also $\varepsilon$-linear.

\newcommand{\seq}[1]{\left[ {#1} \right]}
\newcommand{\Ab}[0]{\overline{\mathsf{Ab}}}
\newcommand{\z}[0]{\mathbf{z}}
\subsection{Stream calculus}\label{streamex}

It is common knowledge that streams, i.e.\ infinite sequences of values, can be studied using methods from differential
calculus. For example, in~\cite{rutten2005coinductive}, a notion of \emph{stream derivative} operator is introduced,
and streams are characterized as the solutions of stream differential equations involving stream derivatives. More recent work in the setting of causal functions between streams of real numbers~\cite{sprunger2019differential, sprunger2019differentiable} has focused on extending the ``classical'' notion of the derivative of a real-valued
function to stream-valued functions.

We seek now to show that causal functions between streams are indeed endowed with the structure of a Cartesian
difference category, the corresponding difference combinator capturing the change of the stream over time. For this we will introduce an idempotent infinitesimal extension on streams that plays a similar role in this setting as Rutten's stream derivative operator~\cite{rutten2005coinductive}, which is given by discarding the head of the stream. On the other hand, our work is more closely related to Sprunger et al.'s work~\cite{sprunger2019differential, sprunger2019differentiable} as it focuses on the differentiation of functions between streams, rather than the description of single streams in terms of differential equations.

For a set $A$, let $A^\omega$ denote the set of infinite sequences of elements of $A$. We write $\seq{a_i}$ for the infinite sequence $\seq{a_i} = (a_0, a_1, a_2, \hdots)$ and $a_{i:\omega}$ for the (infinite) subsequence $(a_i, a_{i + 1}, \hdots)$. A function $f : A^\omega \to B^\omega$ is \textbf{causal} whenever the
$n$-th element ${f\left(\seq{a_i}\right)}_n$ of the output sequence only depends on the first $n$ elements of $\seq{a_i}$. More formally, the function $f$ is causal if and only if, whenever $a_{0:n} = b_{0:n}$,
then ${f\left(\seq{a_i}\right)}_{0:n} = {f\left(\seq{b_i}\right)}_{0:n}$.

We will restrict ourselves to considering streams over abelian groups\footnote{
A similar approach to the one in~\cite{sprunger2019differentiable} is possible where we consider streams on arbitrary difference categories, and lift the difference operator of the underlying category to its category of streams, although it would complicate the presentation of this section without gaining clarity.
}, so let $\Ab^\omega$ be the category whose objects are all the abelian groups and where a morphism from $G$ to $H$ in $\Ab^\omega$ is a causal map from $G^\omega$ to $H^\omega$. $\Ab^\omega$ is a Cartesian left-additive category, where the product is given by the standard product of abelian groups and where the additive structure is lifted point-wise from the structure of $\Ab$.

More concretely, whenever $G, H$ are abelian groups, the hom-set $\Ab^\omega(G, H)$ is endowed with the additive structure that comes from setting
\begin{align*}
{(f+g)\left(\seq{a_i}\right)}_n = {f\left(\seq{a_i}\right)}_n + {g\left(\seq{a_i}\right)}_n && {0\left(\seq{a_i}\right)}_n = 0
\end{align*}

At this point, one might think that the ``natural'' choice of an infinitesimal extension in this setting would
be something akin to Rutten's stream derivative operator, which is given by dropping the first element of the stream, that is, $(\seq{a_i})' = {(\seq{a_i})}_{1:\omega}$. This, however, is not a causal function and so it
does not exist in the category $\Ab^\omega$. That said, it is possible to construct a larger category where this operator is used for differentiation, it however fails to satisfy~\ref{CdCax6}. We instead an define infinitesimal extension for $\Ab^\omega$ as the \emph{truncation} operator $\z$.

\begin{defi} For an abelian group $A$, define the \textbf{truncation operator} $\z_A : A^\omega \to A^\omega$ as follows:
  \begin{align*}
    {(\z \seq{a_i})}_0 = 0 &&  {(\z \seq{a_i})}_{j + 1} = a_{j + 1}
  \end{align*}
\end{defi}

  Note that $\z_A$ is a monoid homomorphism according to the pointwise monoid structure on $A^\omega$. Thus it is straightforward to see that we obtain an infinitesimal extension $\z$ for $\Ab^\omega$.

\begin{thm}
    The category $\Ab^\omega$ is a Cartesian difference category, with the infinitesimal extension $\varepsilon(f) = \z \circ f$ and a difference operator defined as:
    \begin{align*}
           {\dd[f]\left(\seq{a_i}, \seq{b_i}\right)}_0 &= {f\left(\seq{a_i} + \seq{b_i}\right)}_0 -  {f\left(\seq{a_i}\right)}_0 \\
                {\dd[f]\left(\seq{a_i}, \seq{b_i}\right)}_{n + 1} &= {f\left(\seq{a_i} + \z(\seq{b_i}) \right)}_{n + 1} -  {f\left(\seq{a_i}\right)}_{n+1}
    \end{align*}
\end{thm}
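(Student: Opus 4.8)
The plan is to verify the axioms \ref{CdCax0}--\ref{CdCax7} by a single direct, componentwise computation, splitting every identity into its behaviour at the head (index $0$) and at a successor index $n+1$, since both $\dd$ and the truncation $\z$ are defined by exactly this case split. Before touching the combinator, I would record the structural facts that make everything run. First, $\z$ is an additive endomorphism of each $A^\omega$ and is idempotent, $\z\circ\z=\z$, which is immediate from $(\z\seq{a_i})_0=0$ and $(\z\seq{a_i})_{j+1}=a_{j+1}$; hence $\varepsilon(f)=\z\circ f$ is an idempotent infinitesimal extension, so in particular the hypothesis (\ref{lem:d-epsilon-iii'}) holds automatically. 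Second, causality forces ${f(\seq{a_i})}_0$ to depend only on $a_0$, the fact used repeatedly to collapse heads. Third, $\dd[f]$ is itself causal (its value at index $n+1$ depends only on $a_{0},\dots,a_{n+1}$ and $b_{1},\dots,b_{n+1}$), so $\dd[f]$ is a genuine morphism of $\Ab^\omega$ and $\dd$ is well typed for iteration.

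I would then dispatch the first block of axioms, all of which reduce to pointwise identities. Axiom \ref{CdCax0} is essentially the definition of $\dd$ rearranged: at index $n+1$ the defining equation gives ${f(\seq{a_i}+\z\seq{b_i})}_{n+1}={f(\seq{a_i})}_{n+1}+{\dd[f](\seq{a_i},\seq{b_i})}_{n+1}$, and applying $\z$ on the right reproduces exactly this, while at the head both sides collapse to ${f(\seq{a_i})}_0$ since $(\z\seq{b_i})_0=0$ and $\z$ annihilates the head of the correction term. The clauses of \ref{CdCax1} follow from linearity of subtraction together with $\z$ being additive and idempotent; \ref{CdCax3} and \ref{CdCax4} are immediate from the definition and from the streamwise description of products in $\Ab^\omega$; and the chain rule \ref{CdCax5} follows at each index by a telescoping cancellation, using causality at the head and idempotency of $\z$ at successor indices.

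The real work is the additivity-up-to-perturbation axiom \ref{CdCax2} and the second-order axioms. Rather than attacking \ref{CdCax6} and \ref{CdCax7} directly, I would invoke Lemma~\ref{lem:cdc6a} and establish the equivalent, $\varepsilon$-free and manifestly symmetric \ref{CdCax6a} and \ref{CdCax7a}; computing $\dd[\dd[f]]$ means substituting the definition of $\dd$ into itself, and here the collapse $\z\circ\z=\z$ is precisely what reduces the nested truncations of a successor-index second derivative to a single $\z$-perturbed difference of $f$, so that \ref{CdCax6a} and \ref{CdCax7a} fall out by the same telescoping used for the chain rule. The step I expect to be the genuine obstacle is reconciling the head with the tail: at index $0$ the combinator is the \emph{untruncated} finite difference of $f$ (behaving exactly as in the calculus-of-finite-differences example of Section~\ref{discreteex}, where $\varepsilon$ is the identity), whereas at successor indices the increment is truncated by $\z$. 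Consequently the telescoping witnessing \ref{CdCax2} and the argument-swap witnessing \ref{CdCax7a} run through two different regimes, and the crux of the whole proof is checking that the head computation and the tail computation glue into one consistent identity. This is exactly the delicate interplay of \ref{CdCax0}, \ref{CdCax2} and \ref{CdCax6} recorded in Lemma~\ref{lem:d-epsilon}, and I would lean on those identities, especially $\dd[f]\circ\pair{x}{\varepsilon(y)}=\varepsilon(\dd[f])\circ\pair{x}{y}$, together with idempotency of $\z$, to force the two regimes to agree.
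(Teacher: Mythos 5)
Your overall skeleton is the same as the paper's: the paper also argues pointwise with a head/successor case split, explicitly checking only \ref{CdCax0} (via causality collapsing heads), the $\mathbf{z}$-part of \ref{CdCax1}, and \ref{CdCax3} for the identity, before asserting that ``the remaining axioms can be shown to hold by similar pointwise reasoning \ldots very similar to the case of $\overline{\mathsf{Ab}}$''. Your structural observations ($\mathbf{z}$ additive and idempotent, $f(x)_0$ depending only on $x_0$, causality of $\dd[f]$ itself --- a well-typedness point the paper silently skips) are all correct, and your instinct that the crux is reconciling the \emph{untruncated} head clause with the \emph{truncated} tail clause is exactly right. But your proposed resolution of that crux does not work, for two reasons. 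First, it is circular: the identities of Lemma~\ref{lem:d-epsilon}, in particular Lemma~\ref{lem:d-epsilon-i}, are \emph{consequences} of the axioms \ref{CdCax0}--\ref{CdCax7}, so they cannot be invoked while verifying those very axioms for the candidate structure; likewise, the equivalence in Lemma~\ref{lem:cdc6a} holds only ``in the presence of the other axioms'', including \ref{CdCax2}, so your detour through \ref{CdCax6a} and \ref{CdCax7a} does not by itself deliver \ref{CdCax6}.

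Second, and decisively, the step you defer would actually fail: the head case of \ref{CdCax2} does not hold for the operator as stated. Writing $g(x_0) := f(x)_0$ (well-defined by causality) and noting that $\varepsilon(y) = \mathbf{z} \circ y$ has zero head, so that $(x + \varepsilon(y))_0 = x_0$, the two sides of \ref{CdCax2} have heads
\begin{align*}
\left(\dd[f] \circ \pair{x}{y+z}\right)_0 &= g(x_0 + y_0 + z_0) - g(x_0)\\
\left(\dd[f] \circ \pair{x}{y} + \dd[f] \circ \pair{x + \varepsilon(y)}{z}\right)_0 &= \left[ g(x_0+y_0) - g(x_0) \right] + \left[ g(x_0+z_0) - g(x_0) \right]
\end{align*}
which agree only when the second finite difference of $g$ vanishes. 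In $\overline{\mathsf{Ab}}$ (Section~\ref{discreteex}) the telescoping works because $\varepsilon = \mathrm{id}$ shifts the base point to $x + y$; at the head of the stream category the increment is untruncated but the base point is \emph{not} shifted, so the telescoping breaks. Concretely, the causal map $f(x)_n = x_n^2$ on integer streams with $x = 0$ and $y = z$ the constant-one stream gives head $4$ on the left and $2$ on the right, and the same failure propagates to \ref{CdCax6} (even though \ref{CdCax6a} and \ref{CdCax7a} do hold pointwise, which is why your computations for those go through). So no gluing lemma can close the gap you flagged: both your plan and the paper's appeal to ``similarity with $\overline{\mathsf{Ab}}$'' break at index $0$, and the ``simpler'' truncated operator of the subsequent Remark satisfies \ref{CdCax2} but fails \ref{CdCax3} instead --- the head and tail regimes genuinely cannot be made to satisfy both axioms simultaneously with this definition.
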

\begin{proof} We leave it to the reader to check for themselves that $\z$ is an infinitesimal extension.~\ref{CdCax0} is satisfied because of causality. Indeed, since $f$ is causal, $f(\seq{a_i} + \z(\seq{b_i}))$ depends
    only on ${\left(\seq{a_i} + \z(\seq{b_i})\right)}_0$, but note that, by definition of $\z$, this term is precisely
    equal to $a_0$. Hence we obtain the following:
    \begin{align*}
        {\left(f(\seq{a_i}) + \z\left(\dd[f](\seq{a_i}, \seq{b_i})\right)\right)}_0
        = {f\left(\seq{a_i}\right)}_0
        = {f\left(\seq{a_i} + \z(\seq{b_i})\right)}_0
    \end{align*}
    For any other index, we have that:
    \begin{align*}
        {\left(f(\seq{a_i}) + \z\left(\dd[f](\seq{a_i}, \seq{b_i})\right)\right)}_{i+1}
        &= {f(\seq{a_i})}_{i+1} + {\z\left(\dd[f](\seq{a_i}, \seq{b_i})\right)}_{i+1}\\
        &= {f(\seq{a_i})}_{i+1} + {\dd[f]\left(\seq{a_i}, \seq{b_i}\right)}_{i+1}\\
        &= {f(\seq{a_i})}_{i+1} + {f\left(\seq{a_i} + \z(\seq{b_i})\right)}_{i+1} - {f(\seq{a_i})}_{i+1}\\
        &= {f\left(\seq{a_i} + \z(\seq{b_i})\right)}_{i+1}
    \end{align*}

    We also explicitly prove the part of~\ref{CdCax1} concerning the infinitesimal extension $\z$:
    \begin{align*}
        {\dd[\z]\left(\seq{a_i}, \seq{b_i}\right)}_0 &= {\z\left(\seq{a_i} + \seq{b_i}\right)}_0 - {\z(\seq{a_i})}_0 = 0 = {\z(\seq{b_i})}_0\\
        {\dd[\z]\left(\seq{a_i}, \seq{b_i}\right)}_{n+1} &= {\z\left(\seq{a_i} + \z(\seq{b_i})\right)}_{n+1} - {\z(\seq{a_i})}_{n+1} = b_{n+1} = {\z(\seq{b_i})}_{n+1}
    \end{align*}

    Finally, we show that~\ref{CdCax3} holds for the identity map, which is a matter of simple
    calculation as well
    (linearity of the projection maps follows by an almost identical argument). However, as we remark
    later, this property would fail to hold if we had chosen a more ``natural'' candidate for the
    difference operator.
    \begin{align*}
        {\dd[1]\left(\seq{a_i}, \seq{b_i}\right)}_0 &= a_0 + b_0 - a_0 = b_0\\
        {\dd[1]\left(\seq{a_i}, \seq{b_i}\right)}_{n+1} &= {\left(\seq{a_i} + \z(\seq{b_i})\right)}_{n+1} - a_{n+1} = b_{n+1}
    \end{align*}
    The remaining axioms can be shown to hold by similar pointwise reasoning; the corresponding calculations are very similar to the case of $\Ab$.
\end{proof}

\begin{rem}
    One might expect the difference operator in $\Ab^\omega$ to be given by the simpler expression
    \[
        \dd[f](\seq{a_i}, \seq{b_i}) = f(\seq{a_i} + \z(\seq{b_i})) - f(\seq{a_i})
    \]
    While this satisfies some of the Cartesian difference axioms (notably~\ref{CdCax0})
    it does not satisfy all of them: for example,~\ref{CdCax3} fails to hold since
    $\dd[1]\left( \seq{a_i}, \seq{b_i}\right) = \z(\seq{b_i}) \neq \seq{b_i}$.
\end{rem}

Note the similarities between the difference combinator on $\Ab$ and that on $\Ab^\omega$. The induced action can be computed out to be:
  \begin{align*}
   {\left(\seq{a_i} \oplus \seq{b_i}\right)}_0 = a_0 && {\left(\seq{a_i} \oplus \seq{b_i}\right)}_{n+1} = a_{n+1} + b_{n+1}~(\equiv~a_{n+1} \oplus b_{n+1})
\end{align*}

The linear maps (in the Cartesian difference category sense) in $\Ab^\omega$ are precisely those maps $f$ that are group
homomorphisms (when the set of streams $G^\omega$ is equipped with the structure lifted pointwise from the group $G$)
satisfying the additional property that whenever $\seq{a_i}_{1:\omega} = \seq{b_i}_{1:\omega}$ then ${f(\seq{a_i})}_{n+1}
= {f(\seq{b_i})}_{n+1}$, but this is far from evident. The reader can easily verify that any such homomorphism is linear,
we prove here the converse.

\begin{prop}
    Any linear map $f$ in $\Ab^\omega$ is a group homomorphism. Furthermore, whenever $\seq{a_i}_{1:\omega} =
    \seq{b_i}_{1:\omega}$, the map $f$ satisfies ${f(\seq{a_i})}_{n+1} = {f(\seq{b_i})}_{n+1}$.
\end{prop}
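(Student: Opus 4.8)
The plan is to treat the two claims separately: the first (that $f$ is a group homomorphism) is essentially immediate from the general theory already developed, whereas the second requires exploiting the concrete behaviour of the truncation operator $\z$ via axiom~\ref{CdCax0}.

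For the first claim I would invoke Lemma~\ref{add-linear}, which tells us that any linear map $f$ is additive, so that $f \circ (g + h) = f \circ g + f \circ h$ and $f \circ 0 = 0$. Specialising $g$ and $h$ to global points, i.e.\ streams in $G^\omega$, these read $f(\seq{a_i} + \seq{b_i}) = f(\seq{a_i}) + f(\seq{b_i})$ and $f(0) = 0$ as equalities of streams, hence they hold at every index. Since the objects of $\Ab^\omega$ are abelian groups with the pointwise structure on $G^\omega$, this is exactly the statement that $f$ is a group homomorphism $G^\omega \to H^\omega$; preservation of inverses then follows automatically, as $f(\seq{a_i}) + f(-\seq{a_i}) = f(0) = 0$.

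For the second claim, suppose $\seq{a_i}_{1:\omega} = \seq{b_i}_{1:\omega}$ and set $\seq{e_i} = \seq{b_i} - \seq{a_i}$. By hypothesis $e_j = 0$ for all $j \geq 1$, so $\seq{e_i}$ is supported only at index $0$; consequently $\z(\seq{e_i}) = 0$, since $\z$ zeroes the head and leaves the already-vanishing tail untouched. I would then apply~\ref{CdCax0} to the point $\seq{a_i}$ with increment $\seq{e_i}$, using linearity of $f$ (so $\dd[f](\seq{a_i}, \seq{e_i}) = f(\seq{e_i})$) to obtain $f(\seq{a_i} + \z(\seq{e_i})) = f(\seq{a_i}) + \z(f(\seq{e_i}))$. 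As $\z(\seq{e_i}) = 0$, the left-hand side collapses to $f(\seq{a_i})$, and cancelling yields $\z(f(\seq{e_i})) = 0$, which by definition of $\z$ forces $f(\seq{e_i})_{n+1} = 0$ for every $n$. Finally, using the homomorphism property from the first claim, $f(\seq{b_i})_{n+1} = f(\seq{a_i} + \seq{e_i})_{n+1} = f(\seq{a_i})_{n+1} + f(\seq{e_i})_{n+1} = f(\seq{a_i})_{n+1}$, as required.

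The routine part is the componentwise bookkeeping. The one conceptual point, and the place where a naive argument breaks down, is that linearity does \emph{not} directly yield $f(\seq{a_i} + \seq{b_i}) = f(\seq{a_i}) + f(\seq{b_i})$ with the untruncated increment at indices $\geq 1$ (the explicit difference formula only controls $f(\seq{a_i} + \z(\seq{b_i}))$ there). The main obstacle is therefore to route the information through~\ref{CdCax0} together with the annihilation $\z(\seq{e_i}) = 0$, rather than attempting to read the claim off the difference combinator's formula directly; combining this with additivity from Lemma~\ref{add-linear} is what closes the gap.
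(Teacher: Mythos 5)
Your proof is correct, but your argument for the second claim takes a genuinely different route from the paper's. Both proofs handle the first claim identically, via Lemma~\ref{add-linear}. For the second claim, the paper works directly with the explicit formula for the difference combinator: since $f$ is linear, ${f(\seq{b_i})}_{n+1} = {\dd[f](\seq{a_i},\seq{b_i})}_{n+1} = {f(\seq{a_i}+\z(\seq{b_i}))}_{n+1} - {f(\seq{a_i})}_{n+1}$, and setting $\seq{a_i} = \seq{0}$ yields the key identity ${f(\z(\seq{b_i}))}_{n+1} = {f(\seq{b_i})}_{n+1}$, from which the claim is immediate because $\z(\seq{a_i}) = \z(\seq{b_i})$ whenever the tails agree. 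You instead argue axiomatically: you form the head-supported difference $\seq{e_i} = \seq{b_i} - \seq{a_i}$ (legitimate, since the objects are abelian groups), observe $\z(\seq{e_i}) = 0$, run~\ref{CdCax0} together with $\dd[f] = f \circ \pi_1$ at global points to conclude $\z(f(\seq{e_i})) = 0$, hence ${f(\seq{e_i})}_{n+1} = 0$, and finish with additivity. Your route never unfolds the definition of $\dd$, using only that $\varepsilon = \z$ kills exactly the head-supported streams, so it is more portable to other difference categories with a similar infinitesimal extension; the paper's computation is shorter and yields the somewhat stronger intermediate identity ${f(\z(\seq{b_i}))}_{n+1} = {f(\seq{b_i})}_{n+1}$ for \emph{all} streams, not just pairs with equal tails. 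One expository quibble: your closing remark that a naive argument reading the claim off the difference combinator's formula breaks down is inaccurate --- that is precisely what the paper's proof does, successfully; and note that additivity (which you yourself invoke in the last step) already gives $f(\seq{a_i} + \seq{b_i}) = f(\seq{a_i}) + f(\seq{b_i})$ at every index with the untruncated increment, so the first half of that remark is also inconsistent with your own first claim.
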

\begin{proof}
    The first part of the proposition is simply a corollary of Lemma~\ref{add-linear}, according to which
    every linear map in $\Ab^\omega$ is additive and, therefore, a group homomorphism. For the second property, since $f$ is linear, we have:
    \begin{align*}
        {f\left(\seq{b_i}\right)}_{n+1}
        = {\dd[f]\left(\seq{a_i}, \seq{b_i}\right)}_{n+1}
        = {f\left(\seq{a_i} + \z(\seq{b_i})\right)}_{n+1} - {f(\seq{a_i})}_{n+1}
    \end{align*}
    Therefore ${f\left(\seq{a_i} + \z(\seq{b_i})\right)}_{n+1} = {f\left(\seq{a_i}\right)}_{n+1} + {f\left(\seq{b_i}\right)}_{n+1}$.
    By setting $\seq{a_i} = \seq{0}$ in the above equation, and since $f$ preserves the identity element,
    we establish ${f\left(\z(\seq{b_i})\right)}_{n+1} = {f\left(\seq{b_i}\right)}_{n+1}$, from which the
    second part of the desired property follows as an immediate corollary.
\end{proof}

On the other hand, since every map of the form $f \circ \z$ verifies this second property of ``insensitivity'' to the initial element of the stream, it follows that for a map $f$ to be $\varepsilon$-linear it is sufficient (and necessary)
that $f \circ \varepsilon$ be a group homomorphism.

\newcommand{\T}[0]{\mathsf{T}}
\section{Tangent Bundles in Cartesian Difference Categories}%
\label{monadsec}

In this section, we show that the difference combinator of a Cartesian difference category induces a monad, called the \emph{tangent bundle monad}. This construction is a generalization of the tangent bundle monad for Cartesian differential categories~\cite{cockett2014differential,manzyuk2012tangent}. Furthermore, we show that a full subcategory of the Eilenberg-Moore category and the Kleisli category of the tangent bundle monad is again a Cartesian difference category. The general intuition of these categories for the tangent bundle monad are more or less the same as explained in~\cite[Section 3.2]{cockett2014differential}. The maps of the Kleisli category are generalized vector fields, while the considered full subcategory of the Eilenberg-Moore category consists of objects equipped with a linear map which associates tangent vectors to points.

\subsection{The Tangent Bundle Monad} If only to introduce notation, recall that a monad on a category $\mathbb{X}$ is a triple $(\T, \mu, \eta)$ consisting of an endofunctor $\T: \mathbb{X} \to \mathbb{X}$, and two natural transformations $\mu: \mathsf{T}^2 \Rightarrow \mathsf{T}$ and $\eta: \mathsf{1}_\mathbb{X} \Rightarrow \mathsf{T}$ (where $\mathsf{1}_\mathbb{X}: \mathbb{X} \to \mathbb{X}$ is the identity functor), such that the following equalities hold for all objects $A \in \mathbb{X}$:
\begin{equation}\label{monaddef}\begin{gathered} \mu_A \circ \eta_{\T(A)} = 1_{\T(A)} = \mu_A \circ \T(\eta_A) \quad \quad \quad \mu_A \circ \mu_{\T(A)} = \mu_A \circ \T(\mu_A)
 \end{gathered}\end{equation}
Now let $\mathbb{X}$ be a Cartesian difference category with infinitesimal extension $\varepsilon$ and difference combinator $\dd$. Define the functor $\mathsf{T}: \mathbb{X} \to \mathbb{X}$ as follows:
\[ \mathsf{T}(A) = A \times A \quad \quad \quad \mathsf{T}(f) = \langle f \circ \pi_0, \dd[f] \rangle \]
and define the natural transformations $\eta: \mathsf{1}_\mathbb{X} \Rightarrow \mathsf{T}$ and $\mu: \mathsf{T}^2 \Rightarrow \mathsf{T}$ as follows:
\[\eta_A := \langle 1_A, 0 \rangle \quad \quad \quad \mu_A := \left \langle \pi_{00}, \pi_{10} + \pi_{01} + \varepsilon(\pi_{11}) \right \rangle \]
where $\pi_{ij}= \pi_i \circ \pi_j$. Before providing the proof that $(\mathsf{T}, \mu, \eta)$ is indeed a monad for any Cartesian difference category, let us provide some examples for intuition.

\begin{exa}\label{ex:tsmooth}
For a Cartesian differential category, since $\varepsilon = 0$, the induced monad is precisely the monad induced by its tangent category structure~\cite{cockett2014differential,manzyuk2012tangent}. For example, in the Cartesian differential category $\mathsf{SMOOTH}$ (as defined in Section~\ref{smoothex}), one has that $\mathsf{T}(\mathbb{R}^n) = \mathbb{R}^n \times \mathbb{R}^n$, which is in fact the classical tangent bundle over $\mathbb{R}^n$ from differential geometry, and also that $\mathsf{T}(F)(\vec x, \vec y) = (F(\vec x), \mathsf{D}[F](\vec x, \vec y))$, $\eta_{\mathbb{R}^n}(\vec x) = (\vec x,\vec 0)$, and $\mu_{\mathbb{R}^n}((\vec x,\vec y), (\vec z, \vec w)) = (\vec x, \vec y + \vec z)$.
\end{exa}

\begin{exa}\label{ex:tab}
In the Cartesian difference category $\overline{\mathsf{Ab}}$ (as defined in Section~\ref{discreteex}), the monad is given by $\mathsf{T}(G) = G \times G$ $\mathsf{T}(f)(x,y) = (f(x), f(x+y) - f(x))$, $\eta_G(x) = (x,0)$, and $\mu_G((x,y),(z,w)) = (x, y + z + w)$.
\end{exa}

%Referee suggests that the proof could be simpler by showing Kleisli triple axiom (only 3 equations) instead of monad axioms (1 for functoriarity of T, 2 for naturality + 3 for monad laws). What do you think?

We now show that the above construction is indeed a monad.

\begin{prop} Let $\mathbb{X}$ be a Cartesian difference category with infinitesimal extension $\varepsilon$ and difference combinator $\dd$. Then $(\mathsf{T}, \mu, \eta)$, as defined above, is a monad on $\mathbb{X}$.
\end{prop}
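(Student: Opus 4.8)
The plan is to verify, in turn, the four things needed for a monad: that $\T$ is a well-defined endofunctor, that $\eta$ and $\mu$ are natural transformations, and that the three equations in (\ref{monaddef}) hold. Functoriality of $\T$ is immediate: $\T(1_A) = \pair{\pi_0}{\pi_1} = 1_{A \times A}$ by~\ref{CdCax3}, while $\T(g \circ f) = \T(g) \circ \T(f)$ unwinds directly from the chain rule~\ref{CdCax5}, since the first component of $\T(g) \circ \T(f)$ is $g \circ f \circ \pi_0$ and the second is exactly $\dd[g] \circ \pair{f \circ \pi_0}{\dd[f]}$. Naturality of $\eta$ is equally short: for $f : A \to B$ one computes $\T(f) \circ \eta_A = \pair{f}{\dd[f] \circ \pair{1_A}{0}}$, and the second component vanishes by the second equation of~\ref{CdCax2}, leaving $\pair{f}{0} = \eta_B \circ f$.

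The main obstacle is the naturality of $\mu$, i.e.\ $\T(f) \circ \mu_A = \mu_B \circ \T^2(f)$, because it forces the second-order term $\T^2(f)$ to be expanded. First I would compute $\T^2(f) = \T(\T(f))$ explicitly: by~\ref{CdCax4} and~\ref{CdCax5} one finds $\dd[\T(f)] = \pair{\dd[f] \circ \pair{\pi_{00}}{\pi_{01}}}{\dd\left[\dd[f]\right]}$. Feeding both legs of the naturality square through the definition of $\mu$ (the first components agreeing trivially) reduces the whole statement to the single identity
\[ \dd[f] \circ \pair{x}{y + z + \varepsilon(w)} = \dd[f] \circ \pair{x}{y} + \dd[f]\circ\pair{x}{z} + \varepsilon\left(\dd^2[f] \circ \four{x}{y}{z}{w}\right). \]
To establish this I would repeatedly apply the ``perturbed additivity'' axiom~\ref{CdCax2} to split the left-hand side along $y + z + \varepsilon(w)$; convert the resulting $\varepsilon$-shifted first arguments into second-order derivatives using~\ref{CdCax0} and~\ref{CdCax6}; pull the $\varepsilon(w)$-term outside via Lemma~\ref{lem:d-epsilon-i}; and finally match against the right-hand side after expanding $\dd^2[f] \circ \four{x}{y}{z}{w}$ with~\ref{CdCax2} and swapping arguments through~\ref{CdCax7} (equivalently~\ref{CdCax7a}). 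Lemma~\ref{lem:ep-pair} and~\ref{Eax1} are used throughout to push $\varepsilon$ through pairings and sums. This is the delicate calculation; everything else is bookkeeping.

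Finally, for the monad equations I would first record that $\mu_A$ is itself \emph{linear}: a short computation with~\ref{CdCax1},~\ref{CdCax3},~\ref{CdCax4} and~\ref{Eax2} gives $\dd[\mu_A] = \mu_A \circ \pi_1$, so $\T(\mu_A) = \mu_A \times \mu_A$ acts componentwise. The two unit laws $\mu_A \circ \eta_{\T(A)} = 1_{\T(A)} = \mu_A \circ \T(\eta_A)$ then follow by direct substitution, using $\dd[\eta_A] = \pair{\pi_1}{0}$ (from~\ref{CdCax1},~\ref{CdCax3},~\ref{CdCax4}) together with $\varepsilon(0) = 0$. The associativity law $\mu_A \circ \mu_{\T(A)} = \mu_A \circ \T(\mu_A)$ is checked by evaluating both composites on a generic point of $\T^3(A)$: each side collapses to a pair whose first component is the leading coordinate and whose second component is the same sum of seven terms $y, z, a, \varepsilon(w), \varepsilon(b), \varepsilon(c), \varepsilon^2(d)$ (in the evident labelling of the input), so the two agree by commutativity of $+$ and the additivity of $\varepsilon$ from~\ref{Eax1}. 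Thus the genuinely nontrivial work is confined to the naturality of $\mu$, and the rest is routine.
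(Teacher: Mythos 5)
Your proposal is correct and follows essentially the same route as the paper: direct verification of functoriality (via~\ref{CdCax3} and~\ref{CdCax5}), naturality of $\eta$ via the second part of~\ref{CdCax2}, naturality of $\mu$ by expanding $\T^2(f)$ and reducing to exactly the identity you isolate (which the paper establishes with one application of~\ref{CdCax2} followed by~\ref{CdCax0}), and the monad laws by substitution together with linearity of $\mu$ to get $\T(\mu) = \mu \times \mu$, with associativity collapsing both sides to the same seven-term sum. The only cosmetic difference is that your derivation of the key identity passes through~\ref{CdCax7a} to reorder the arguments of $\dd^2[f]$, whereas the paper's choice of base point $\pair{\pi_{00}}{\pi_{10}}$ and perturbation $\pair{\pi_{01}}{\pi_{11}}$ in~\ref{CdCax0} makes the second derivative appear in the right order automatically.
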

\begin{proof} That $\mathsf{T}$ is a functor follows from the fact that the change action model $\alpha : \mathbb{X} \to \mathsf{CAct}(\mathbb{X})$ from Proposition~\ref{CdtoCA} is a functor. Specifically, $\mathsf{T}$ is the second component of $\alpha$. Next we show the naturality of $\eta$ and $\mu$. For $\eta$ we have:
  \begin{align*}
  \T(f) \circ \eta_A &=~ \pair{f \circ \pi_0}{\dd[f]} \circ \eta_A \\
  &=~  \pair{f \circ \pi_0 \circ \eta_A}{
    \dd[f] \circ \eta_A} \\
  &=~ \pair{f \circ \pi_0 \circ \pair{1_{A}}{0}}{
    \dd[f] \circ \pair{1_{A}}{0}} \\
    &=~  \pair{f}{0} \tag*{\ref{CdCax2}} \\
    &=~ \pair{1_{B}}{0} \circ f \\
  &=~ \eta_B \circ f
  \end{align*}
  For the naturality of $\mu$, first it is straightforward to check that:
  \begin{align*}
    \T^2(f) = \four{f \circ \pi_{00}}
    {\dd[f] \circ \pi_0}
    {\dd[f] \circ (\pi_0 \times \pi_0)}
    {\dd^2[f]}
  \end{align*}
and therefore we compute:
  \begin{align*}
  &\mu_B \circ \T^2(f) =~ \left \langle \pi_{00}, \pi_{10} + \pi_{01} + \varepsilon(\pi_{11}) \right \rangle \circ \T^2(f) \\
  &=~ \left \langle \pi_{00} \circ \T^2(f) , \left(\pi_{10} + \pi_{01} + \varepsilon(\pi_{11}) \right) \circ \T^2(f)   \right \rangle \\
  &=~ \left \langle \pi_{00} \circ \T^2(f) , \pi_{10}  \circ \T^2(f) + \pi_{01}  \circ \T^2(f) + \varepsilon(\pi_{11})  \circ \T^2(f)   \right \rangle \\
    &=~ \left \langle \pi_{00} \circ \T^2(f) , \pi_{10}  \circ \T^2(f) + \pi_{01}  \circ \T^2(f) + \varepsilon(\pi_{11}  \circ \T^2(f) )   \right \rangle  \tag*{\ref{Eax2}} \\
        &=~ \left \langle f \circ \pi_{00} , \dd[f] \circ \pi_0 + \dd[f] \circ (\pi_0 \times \pi_0)+ \varepsilon(\dd^2[f] )   \right \rangle \\
  & =~
  \pair{f \circ \pi_{00}}{
    \dd[f] \circ (\pi_0 \times \pi_0)
    + \left(
      \dd[f] \circ \pi_0 + \varepsilon(\dd^2[f])
    \right)
  } \\
    &=~ \pair{
      f \circ \pi_{00}
    }{
      \dd[f] \circ \pair{\pi_{00}}{\pi_{01}}
      + \left(\dd[f] \circ \pair{\pi_{00}}{\pi_{10}}
      + \varepsilon\left(\dd^2[f] \circ \four{\pi_{00}}{\pi_{10}}{\pi_{01}}{\pi_{11}} \right) \right)
    }\\
      &=~ \pair{
      f \circ \pi_{00}
    }{
      \dd[f] \circ \pair{\pi_{00}}{\pi_{01}}
      + \dd[f] \circ \pair{\pi_{00} + \varepsilon(\pi_{01})}{\pi_{10} + \varepsilon(\pi_{11})}
    }
     \tag*{\ref{CdCax0}} \\
        &=~ \pair{
      f \circ \pi_{00}
    }{
      \dd[f] \circ \pair{\pi_{00}}{\pi_{01} + \pi_{10} + \varepsilon(\pi_{11})}
    }
   \tag*{\ref{CdCax2}} \\
   &=~ \pair{f \circ \pi_0}{
      \dd[f]} \circ \pair{\pi_{00}}{\pi_{01} + \pi_{10} + \varepsilon(\pi_{11})
    }
    \\
&=~    \T(f) \circ \mu_A
  \end{align*}
 So $\eta$ and $\mu$ are natural transformations. Now we show that $\mu$ and $\eta$ satisfy the monad identities (\ref{monaddef}). Starting with $\mu_{A} \circ \eta_A = 1_{\T(A)}$:
  \begin{align*}
    \mu_A \circ \eta_{\T(A)}
    &=~  \left \langle \pi_{00}, \pi_{10} + \pi_{01} + \varepsilon(\pi_{11}) \right \rangle \circ  \eta_{\T(A)} \\
    &=~  \left \langle \pi_{00} \circ  \eta_{\T(A)} , \left(\pi_{10} + \pi_{01} + \varepsilon(\pi_{11}) \right) \circ  \eta_{\T(A)}  \right \rangle \\
    &=~ \left \langle \pi_{00} \circ  \eta_{\T(A)} , \pi_{10} \circ  \eta_{\T(A)} + \pi_{01} \circ  \eta_{\T(A)} + \varepsilon(\pi_{11}) \circ  \eta_{\T(A)}  \right \rangle \\
        &=~ \left \langle \pi_{00} \circ  \eta_{\T(A)} , \pi_{10} \circ  \eta_{\T(A)} + \pi_{01} \circ  \eta_{\T(A)} + \varepsilon(\pi_{11} \circ  \eta_{\T(A)} )   \right \rangle  \tag*{\ref{Eax2}} \\
        &=~ \left \langle \pi_{00} \circ  \pair{1_{T(A)}}{0} , \pi_{10} \circ  \pair{1_{T(A)}}{0} + \pi_{01} \circ \pair{1_{T(A)}}{0} + \varepsilon\left(\pi_{11} \circ \pair{1_{T(A)}}{0} \right)   \right \rangle  \\
    &=~ \pair{\pi_0}{\pi_1 + 0 + \varepsilon(0)}
    \\
    &=~ \pair{\pi_0}{\pi_1 + 0} \tag*{\ref{Eax1}} \\
    &=~ \pair{\pi_0}{\pi_1}\\
    &= 1_{\T(A)}
  \end{align*}
Next we check that $\mu_{A} \circ \T(\eta_A) = 1_{\T(A)}$. First note that $\T(\eta_A) = \four{\pi_0}{0}{\pi_1}{0}$. Then we compute:
\begin{align*}
\mu_A \circ\T(\eta_{A}) &=~  \pair{\pi_{00}}{\pi_{01} + \pi_{10} + \varepsilon(\pi_{11})}
    \circ \T(\eta_{A})
      \\
  &=~ \pair{\pi_{00}     \circ\T(\eta_{A}) }{\left(\pi_{01} + \pi_{10} + \varepsilon(\pi_{11}) \right)     \circ\T(\eta_{A})  } \\
    &=~ \pair{\pi_{00}     \circ\T(\eta_{A})}{\pi_{01}\circ\T(\eta_{A}) + \pi_{10}\circ\T(\eta_{A}) + \varepsilon(\pi_{11}) \circ\T(\eta_{A})   } \\
    &=~ \pair{\pi_{00}     \circ\T(\eta_{A}) }{\pi_{01}\circ\T(\eta_{A})+ \pi_{10}\circ\T(\eta_{A}) + \varepsilon\left(\pi_{11} \circ\T(\eta_{A})\right)}  \tag*{\ref{Eax2}} \\
        &=~ \pair{\pi_0}{\pi_1 + 0 + \varepsilon(0)}
    \\
 &=~ \pair{\pi_0}{\pi_1 + 0} \tag*{\ref{Eax1}} \\
    &=~ \pair{\pi_0}{\pi_1} \\
    &=~ 1_{\T(A)}
\end{align*}
For the last of the monad laws, we first note that, since $\mu$ is linear,
  it follows that $\T(\mu) = \mu \times \mu$. Then it suffices to compute:
  \begin{align*}
   & \mu_A \circ \T(\mu_A)
    = \mu_A \circ (\mu_A \times \mu_A)
    \\
    &= \pair{\pi_{00}}{\pi_{10} + \pi_{01} + \varepsilon(\pi_{11})}
    \circ (\mu_A \times \mu_A)
    \\
        &= \pair{\pi_{00}   \circ (\mu_A \times \mu_A)}{\left( \pi_{10} + \pi_{01} + \varepsilon(\pi_{11}) \right)    \circ (\mu_A \times \mu_A)}
    \\
            &= \pair{\pi_{00}   \circ (\mu_A \times \mu_A)}{\pi_{10} \circ (\mu_A \times \mu_A)+ \pi_{01} \circ (\mu_A \times \mu_A)+ \varepsilon(\pi_{11})   \circ (\mu_A \times \mu_A)}
    \\
                &= \pair{\pi_{00}   \circ (\mu_A \times \mu_A)}{\pi_{10} \circ (\mu_A \times \mu_A)+ \pi_{01} \circ (\mu_A \times \mu_A)+ \varepsilon(\pi_{11} \circ (\mu_A \times \mu_A))   }
\tag*{\ref{Eax2}} \\
    &=
    \pair{\pi_0 \circ \mu_A \circ \pi_0}{\pi_1 \circ \mu_A \circ \pi_0
    + \pi_0 \circ \mu_A \circ \pi_1 + \varepsilon(\pi_1 \circ \mu_A \circ \pi_1)}
    \\
    &=
    \pair{\pi_{000}}{
      \pi_{100} + \pi_{010} + \varepsilon(\pi_{110})
      + \pi_{001}
      + \varepsilon \pa{\pi_{101} + \pi_{011} + \varepsilon(\pi_{111})}
    }
    \\
    &=
    \pair{\pi_0 \circ \pi_{00}}{
      \pi_{1} \circ \pi_{00}
      + \pi_0 \circ \pa{\pi_{10} + \pi_{01} + \varepsilon(\pi_{11})}
      + \varepsilon\pa{\pi_1 \circ \pa{\pi_{10} + \pi_{01} + \varepsilon(\pi_{11})}}
    }
    \\
    &=
    \pair{\pi_{00}}{\pi_{10} + \pi_{01} + \varepsilon(\pi_{11}) }
    \circ \pair{\pi_{00}}{\pi_{10} + \pi_{01} + \varepsilon(\pi_{11}) }
    \\
    &= \mu_A \circ \mu_{\T(A)}
  \end{align*}
  So we conclude that $(\T, \mu, \eta)$ is a monad.
\end{proof}

Those familiar with monads may wonder if the Kleisli triple approach might have simplified the above proof. Recall that for a category $\mathbb{X}$, a Kleisli triple is a triple $(\T, \eta, {(\_)}^\sharp)$ consisting of a function on object $\T$, $A \mapsto \T(A)$, a family of maps indexed by the objects of $\mathbb{X}$, $\eta = \lbrace \eta_A: A \to \T(A) \vert~ A \in \mathbb{X} \rbrace$, and an operator ${(\_)}^\sharp$ which for any map $f: A \to \T(B)$ results in a map $f^\sharp: \T(A) \to \T(B)$, and such that the following equalities hold:
\[ f^\sharp \circ \eta_A = f \quad \quad \quad \eta_A^\sharp = 1_{\T(A)} \quad \quad \quad {(g^\sharp \circ f)}^\sharp = g^\sharp \circ f^\sharp \]
There is a bijective correspondence between monads and Kleisli triples. So for the tangent bundle monad, we could have instead defined its associated Kleisli triple and prove 3 equations instead of the functoriality of $\T$, the naturality of $\mu$ and $\eta$, and the 3 monad identities. However, we find the operator ${(\_)}^\sharp$ slightly more complicated to work with. As such, we elected to work out the monad identities directly, since while there are more identities to prove, we find the computations simpler and the proof easier to follow. That said, it is still interesting to work out the ${(\_)}^\sharp$ for the tangent bundle monad. In general, for a map $f: A \to \T(B)$, $f^\sharp: \T(A) \to \T(B)$ is equal to the composite $\T(f) = \mu_B \circ \T(f)$. In the case of the tangent bundle monad, note that a $f: A \to \T(B)$ would be a pair $f = \langle f_0, f_1 \rangle$, and so we obtain that (which we leave an excercise for the reader to compute for themselves):
\begin{align*}
    f^\sharp = \pair{f_0 \circ \pi_0}{ f_1 \circ \pi_0 + \partial[f_0]+ \varepsilon\left( \partial[f_1] \right) }
\end{align*}

%\begin{exa} \normalfont In the Cartesian difference category $\mathsf{MOD}_R$ (as defined in Section~\ref{moduleex}) with infinitesimal extension $\varepsilon^r$, for $r \in R$, $\mathsf{T}(f)(m,n) = (f(m), f(n))$, $\eta_M(m) = (m,0)$, and $\mu_M((m,n),(p,q)) = (m, n + p + rq)$.
%\end{exa}

We next observe that the tangent bundle functor $\T$ preserves finite products up to isomorphism. Indeed, note that $\T(A \times B) \cong \T(A) \times \T(B)$ via the canonical natural isomorphism:
\begin{equation}\label{phidef1}\begin{gathered} \phi_{A,B}: \T(A \times B)  \to \T(A) \times \T(B) \quad \quad \quad \quad \phi_{A,B} := \pair{T(\pi_0)}{T(\pi_1)}
 \end{gathered}\end{equation}
By~\ref{CdCax3}, it follows that $\phi_{A,B} = \pair{\pi_0 \times \pi_0}{\pi_1 \times \pi_1}$ and its inverse $\phi^{-1}_{A,B}: \T(A) \times \T(B) \to \T(A \times B)$ is defined in the same way, that is, $\phi^{-1}_{A,B} = \pair{\pi_0 \times \pi_0}{\pi_1 \times \pi_1}$. Expanding this out, we can see that $\phi_{A,B}: (A \times B) \times (A \times B) \to (A \times A) \times (B \times B)$ swaps the middle two arguments, and so does $\phi^{-1}_{A,B}$. The following lemma will be extremely useful in many proofs throughout the remainder of this section.

\begin{lem}\label{lem:t-prod} In a Cartesian difference category:
  \begin{enumerate}[(\roman{enumi}),ref={\thelem.\roman{enumi}}]
\item $\mu$, $\eta$, and $\phi$ are linear;
\item\label{T-sum} $\T(f + g) = \T(f) + \T(g)$ and $\T(0) = 0$;
\item If $f$ is linear then $\T(f)$ is linear and $\T(f) = f \times f$;
\item $\T(\pi_i) = \pi_i \circ \phi$ and $\phi \circ \T(\pair{f}{g}) = \pair{\T(f)}{\T(g)}$ and $\phi \circ \T(h \times k) = (\T(h) \times \T(k)) \circ \phi$;
\item\label{T-diff} $\T(\dd[f]) = \dd[\T(f)] \circ \phi$ and $\T(\varepsilon(f)) = \varepsilon(\T(f))$;
\item\label{lem:mu-ep} $\varepsilon(\mu_A) = \mu_A \circ \varepsilon(1_{\T(A)})$, $\varepsilon(\eta_A) = \eta_A \circ \varepsilon(1_A)$, and $\varepsilon(\phi_{A,B}) = \phi_{A,B} \circ \varepsilon(1_{\T(A \times B)})$
\end{enumerate}
\end{lem}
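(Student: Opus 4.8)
The plan is to prove the six items in the order given, since each is a direct computation against the definitions $\T(f) = \pair{f \circ \pi_0}{\dd[f]}$, $\eta_A = \pair{1_A}{0}$, $\mu_A = \pair{\pi_{00}}{\pi_{10} + \pi_{01} + \varepsilon(\pi_{11})}$, $\phi = \pair{\T(\pi_0)}{\T(\pi_1)}$, and all but one reduce to an earlier lemma or axiom. Before starting I would record one auxiliary observation used repeatedly: $\varepsilon$ preserves linearity. Indeed, if $f$ is linear, i.e.\ $\dd[f] = f \circ \pi_1$, then $\dd[\varepsilon(f)] = \varepsilon(\dd[f]) = \varepsilon(f \circ \pi_1) = \varepsilon(f) \circ \pi_1$ by \ref{CdCax1} and \ref{Eax2}, so $\varepsilon(f)$ is again linear.

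For (i), $\eta_A = \pair{1_A}{0}$ satisfies $\dd[\eta_A] = \pair{\dd[1_A]}{\dd[0]} = \pair{\pi_1}{0} = \eta_A \circ \pi_1$ using \ref{CdCax4}, \ref{CdCax3}, \ref{CdCax1}, hence is linear. Since projections are linear (\ref{id-linear}) and composites, sums, pairings, and products of linear maps are linear (\ref{comp-linear}), and since $\varepsilon(\pi_{11})$ is linear by the auxiliary fact, both $\phi = \pair{\pi_0 \times \pi_0}{\pi_1 \times \pi_1}$ and $\mu_A$ are built entirely from linear maps and are therefore linear. Item (ii) follows from \ref{CdCax1} (giving $\dd[f+g] = \dd[f]+\dd[g]$ and $\dd[0]=0$), left-additivity of precomposition, and \ref{claclemma}. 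Item (iii) is immediate: linearity of $f$ gives $\T(f) = \pair{f \circ \pi_0}{\dd[f]} = \pair{f \circ \pi_0}{f \circ \pi_1} = f \times f$, which is linear by \ref{comp-linear}.

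Item (iv) then drops out of $\phi = \pair{\T(\pi_0)}{\T(\pi_1)}$ and functoriality of $\T$: we get $\pi_i \circ \phi = \T(\pi_i)$ at once, then $\phi \circ \T\pair{f}{g} = \pair{\T(\pi_0) \circ \T\pair{f}{g}}{\T(\pi_1) \circ \T\pair{f}{g}} = \pair{\T(f)}{\T(g)}$, and the $h \times k$ identity follows by writing $h \times k = \pair{h \circ \pi_0}{k \circ \pi_1}$ and combining the two previous identities with $\T(\pi_i) = \pi_i \circ \phi$. The second equation of (v) is routine: $\T(\varepsilon(f)) = \pair{\varepsilon(f) \circ \pi_0}{\dd[\varepsilon(f)]} = \pair{\varepsilon(f \circ \pi_0)}{\varepsilon(\dd[f])} = \varepsilon\pair{f \circ \pi_0}{\dd[f]} = \varepsilon(\T(f))$, using \ref{Eax2}, \ref{CdCax1}, and \ref{lem:ep-pair}. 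Finally, (vi) is just the instance of \ref{epsilon-linear} applied to the linear maps $\mu_A$, $\eta_A$, $\phi_{A,B}$ of (i), with the identity taken on their respective domains.

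The main obstacle is the first equation of (v), $\T(\dd[f]) = \dd[\T(f)] \circ \phi$. I would expand the left side to $\T(\dd[f]) = \pair{\dd[f] \circ \pi_0}{\dd^2[f]}$, and the right side via \ref{CdCax4} to $\dd[\T(f)] = \pair{\dd[f \circ \pi_0]}{\dd^2[f]}$, where the chain rule \ref{CdCax5} together with \ref{CdCax3} gives $\dd[f \circ \pi_0] = \dd[f] \circ \pair{\pi_{00}}{\pi_{01}}$. Precomposing with $\phi$ and using $\pi_0 \circ \phi = \pi_0 \times \pi_0$ and $\pi_1 \circ \phi = \pi_1 \times \pi_1$, one checks that $\pair{\pi_{00}}{\pi_{01}} \circ \phi = \pair{\pi_{00}}{\pi_{10}} = \pi_0$, so the first components agree; the second components reduce to $\dd^2[f] \circ \phi = \dd^2[f]$, which is precisely the middle-swap axiom \ref{CdCax7a}, since $\phi$ transposes the two inner coordinates. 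The delicate point throughout is the bookkeeping of the four nested projections $\pi_{ij}$ and recognizing that $\phi$ enacts exactly the coordinate transposition that \ref{CdCax7a} equates.
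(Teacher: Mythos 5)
Your proof is correct and takes essentially the same route as the paper's: an itemwise computation from the definitions in which the only substantive step is $\T(\dd[f]) = \dd[\T(f)] \circ \phi$, and you, exactly like the paper, dispose of it by noting that $\phi$ transposes the two middle coordinates so that the second components agree by~\ref{CdCax7a} while the first reduce to $\dd[f] \circ \pi_0$. Your small streamlinings --- deriving (iv) from $\phi = \pair{\T(\pi_0)}{\T(\pi_1)}$ plus functoriality of $\T$ instead of componentwise expansion, recording explicitly that $\varepsilon$ preserves linearity (which the paper leaves implicit but which is needed for the summand $\varepsilon(\pi_{11})$ of $\mu_A$), and reading (vi) as Lemma~\ref{epsilon-linear} applied with the identity on each map's actual domain --- are all sound.
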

\begin{proof} The first three are mostly straightforward. Indeed, (i) follows immediately from Lemma~\ref{lem:linear} and by construction of $\mu$, $\eta$, and $\phi$, while (ii) follows from~\ref{CdCax1}, and (iii) follows from the definition of $\T$ and linear maps. Next, we compute the identities of (iv), the first two follow from~\ref{CdCax3} and~\ref{CdCax4}. We first show $\T(\pi_i) = \pi_i \circ \phi$:
\begin{align*}
    \T(\pi_i) &=~ \pair{\pi_i \circ \pi_0}{\dd[\pi_i]} \\
    &=~ \pair{\pi_i \circ \pi_0}{\pi_i \circ \pi_1} \tag*{\ref{CdCax3}} \\
    &=~ \pi_i \times \pi_i \\
    &=~ \pi_i \circ \phi \end{align*}
    Next we compute that $\phi \circ \T(\pair{f}{g}) = \pair{\T(f)}{\T(g)}$:
    \begin{align*}
 &\phi \circ \T(\pair{f}{g}) =~ \phi \circ \pair{\pair{f}{g} \circ \pi_0}{\dd[\pair{f}{g}]} \\
 &=~ \phi \circ \four{f \circ \pi_0}{g \circ \pi_0}{\dd[f]}{\dd[g]} \tag*{\ref{CdCax4}} \\
 &=~ \pair{\pi_0 \times \pi_0}{\pi_1 \times \pi_1} \circ \four{f \circ \pi_0}{g \circ \pi_0}{\dd[f]}{\dd[g]} \\
 &=~ \pair{(\pi_0 \times \pi_0) \circ \four{f \circ \pi_0}{g \circ \pi_0}{\dd[f]}{\dd[g]}}{(\pi_1 \times \pi_1) \circ \four{f \circ \pi_0}{g \circ \pi_0}{\dd[f]}{\dd[g]}} \\
 &=~ \pair{\pair{\pi_0 \circ \pair{f \circ \pi_0}{g \circ \pi_0}}{\pi_0 \circ \pair{\dd[f]}{\dd[g]}}}{\pair{\pi_1 \circ \pair{f \circ \pi_0}{g \circ \pi_0}}{\pi_1 \circ\pair{ \dd[f]}{\dd[g]}}} \\
 &=~ \four{f \circ\pi_0}{\dd[f]}{g \circ \pi_0}{\dd[g]} \\
 &=~  \pair{\T(f)}{\T(g)}
\end{align*}
For the remaining identity, $\phi \circ \T(h \times k) = (\T(h) \times \T(k)) \circ \phi$, we use the previous one:
\begin{align*}
    \phi \circ \T(h \times k) &=~ \phi \circ \T(\pair{h \circ \pi_0}{k \circ \pi_1}) \\
    &=~ \pair{\T(h \circ \pi_0)}{\T(k \circ \pi_1)} \tag{Lem.\ref{lem:t-prod}.iv} \\
    &=~ \pair{\T(h) \circ \T(\pi_0)}{\T(k) \circ \T(\pi_1)} \tag{$\T$ is a functor} \\
    &=~ \pair{\T(h) \circ (\pi_0 \times \pi_0)}{\T(k) \circ (\pi_1 \times \pi_1)} \tag{Lem.\ref{lem:t-prod}.iii} \\
    &=~ \pair{\T(h) \circ \pi_0 \circ \phi}{\T(k) \circ \pi_1 \circ \phi} \\
    &=~ \pair{\T(h) \circ \pi_0}{ \T(k) \circ \pi_1} \circ \phi \\
    &=~ (\T(h) \times \T(k)) \circ \phi
\end{align*}
Next we compute the two identities of (v). First we show $\T(\dd[f]) = \dd[\T(f)] \circ \phi$:
\begin{align*}
&\dd[\T(f)] \circ \phi =~ \dd[\pair{f \circ \pi_0}{\dd[f]}] \circ \phi \\
&=~ \pair{\dd[f \circ \pi_0]}{\dd^2[f]} \circ \phi \tag*{\ref{CdCax4}} \\
&=~ \pair{\dd[f] \circ (\pi_0 \times \pi_0)}{\dd^2[f]} \circ \phi \tag{Lem.\ref{lem:linear}} \\
&=~ \pair{\dd[f] \circ (\pi_0 \times \pi_0) \circ \phi}{\dd^2[f] \circ \phi} \\
&=~ \pair{\dd[f] \circ (\pi_0 \times \pi_0) \circ \pair{\pi_0 \times \pi_0}{\pi_1 \times \pi_1}}{\dd^2[f] \circ \pair{\pi_0 \times \pi_0}{\pi_1 \times \pi_1}} \\
&=~ \pair{\dd[f] \circ \pair{\pi_0 (\pi_0 \times \pi_0)}{\pi_0 \circ (\pi_1 \times \pi_1)}}{\dd^2[f] \circ \four{\pi_0 \circ \pi_0}{\pi_0 \circ \pi_1}{\pi_1 \circ \pi_0}{\pi_1 \circ \pi_1}} \\
&=~ \pair{\dd[f] \circ \pair{\pi_1 \circ \pi_0}{\pi_1 \circ \pi_0}}{\dd^2[f] \circ \four{\pi_0 \circ \pi_0}{\pi_0 \circ \pi_1}{\pi_1 \circ \pi_0}{\pi_1 \circ \pi_1}} \\
&=~ \pair{\dd[f] \circ \pair{\pi_0 \circ \pi_0}{\pi_1 \circ \pi_0}}{\dd^2[f] \circ \four{\pi_0 \circ \pi_0}{\pi_1 \circ \pi_0}{\pi_0 \circ \pi_1}{\pi_1 \circ \pi_1}} \tag*{\ref{CdCax7a}}\\
&=~ \pair{\dd[f] \circ \pi_0}{\dd^2[f]} \\
&=~ \T(\dd[f])
\end{align*}
Next we show $\T(\varepsilon(f)) = \varepsilon(\T(f))$:
\begin{align*}
    \T(\varepsilon(f)) &=~ \pair{\varepsilon(f) \circ \pi_0}{\dd[\varepsilon(f)]} \\
    &=~ \pair{\varepsilon(f \circ \pi_0)}{\varepsilon(\dd[f])} \tag*{\ref{Eax2} +~\ref{CdCax1}} \\
    &=~\varepsilon(\pair{f \circ \pi_0}{\dd[f]}) \tag{Lem.\ref{lem:ep-pair}} \\
    &=~ \varepsilon(\T(f))
\end{align*}
Lastly (vi) follows from the fact we have shown that $\mu$, $\eta$, and $\phi$ are linear and so by Lemma~\ref{epsilon-linear}, the desired equalities hold.
\end{proof}

\subsection{The Eilenberg-Moore Category of \texorpdfstring{$\mathsf{T}$}{T}}%
\label{EMsec}

Recall that a $\T$-algebra of the monad $(\mathsf{T}, \mu, \eta)$ is a pair $(A, \nu)$ consisting of an object $A$ and a map $\nu: \T(A) \to A$ such that $\nu \circ \eta_A = 1_A$ and $\nu \circ \T(\nu) = \nu \circ \mu_A$, and that a $\T$-algebra morphism $f: (A, \nu) \to (B, \omega)$ is a map $f: A \to B$ such that $\omega \circ\T(f) = f \circ \nu$. The Eilenberg-Moore category of $(\mathsf{T}, \mu, \eta)$ is the category of $\T$-algebras, that is, the category $\mathbb{X}^\mathsf{T}$ whose objects are $\T$-algebras and whose maps are $\T$-algebra morphisms. It is well known that for any monad on a category with finite products, the Eilenberg-Moore category also has finite products. Indeed, in this case, for $\T$-algebras $(A, \nu)$ and $(B, \omega)$, their product is defined as:
\[ (A, \nu) \times (B, \omega) := (A \times B, (\nu \times \omega) \circ \phi_{A,B}) \]
while the projection maps and the pairing of maps are the same as in the base category. The terminal object in $\mathbb{X}^\mathsf{T}$ is defined as $(\top, 0)$.

Therefore, it may be tempting to think that the Eilenberg-Moore category of the tangent bundle monad is also a Cartesian difference category. Unfortunately, there are two problems with this: the sum of a $\T$-algebra morphism is not necessarily a $\T$-algebra morphism and the differential of a $\T$-algebra may not necessarily be a $\T$-algebra. Indeed, let $f,g: (A, \nu) \to (B, \omega)$ be $\T$-algebra morphism. On the one hand, since $\T(f + g) = \T(f) + \T(g)$ we have that:
\[ \omega \circ \T(f+ g) = \omega \circ (\T(f) + \T(g)) = \omega \circ \T(f) + \omega \circ \T(g) = f \circ \nu + g \circ \nu  \]
However, if $\nu$ is not additive then $f \circ \nu + g \circ \nu$ may not be equal to $(f + g) \circ \nu$. As such, one solution could be to consider the subcategory of additive $\T$-algebras. While this full subcategory of additive $\T$-algebras will be a Cartesian left additive category, this does not solve the problem of the differential of a $\T$-algebra morphism. For a $\T$-algebra morphism $f: (A, \nu) \to (B, \omega)$, its derivative $\dd[f]: A \times A \to B$ should be a $\T$-algebra morphism of type $(A, \nu) \times (A, \nu) \to (B, \omega)$. However one cannot get very far in trying to show that $\dd[f]$ is a $\T$-algebra morphism. The solution to this problem is instead to consider the full subcategory of \emph{linear} $\T$-algebras.

Define $\mathbb{X}^{\T}_{lin}$ as the category of \emph{linear} $\T$-algebra, that is, the category whose objects are $\T$-algebras $(A, \nu)$ such that $\nu$ is linear and whose maps are arbitrary $\T$-algebra morphisms between them. In particular, note that a map in $\mathbb{X}^{\T}_{lin}$ need not be linear. Like the Eilenberg-Moore category, there is an obvious forgetful functor $\mathsf{U}: \mathbb{X}^{\T}_{lin} \to \mathbb{X}$ defined as $\mathsf{U}(A, \nu) = A$ and $\mathsf{U}(f) = f$. A linear $\T$-algebra structure should be interpreted as a map which linearly modifies a point by a tangent vector. Unfortunately, as mentioned in~\cite{cockett2014differential}, the Eilenberg-Moore category of the tangent bundle monad has yet to be studied in full, even in classical differential geometry.

\begin{exa}
For the tangent bundle monad on the Cartesian differential category $\mathsf{SMOOTH}$ (Example~\ref{ex:tsmooth}), a linear $\T$-algebra is a pair $(\mathbb{R}^n, \nu)$ such that $\nu(\vec x, \vec y) = \vec x + t \vec y$ for some fixed $t \in \mathbb{R}$. Therefore, $\mathsf{SMOOTH}^{\T}_{lin}$ is equivalent to the category whose objects are pairs $(\mathbb{R}^n, t)$, with $t \in \mathbb{R}$, where a map $F: (\mathbb{R}^n, t) \to (\mathbb{R}^m, s)$ is a smooth function $F: \mathbb{R}^n \to \mathbb{R}^m$ such that $F(\vec x + t \vec y) = F(\vec x) + s \mathsf{D}[F](\vec x, \vec y)$.
\end{exa}

\begin{exa}
For the tangent bundle monad on the Cartesian differential category $\overline{\mathsf{Ab}}$ (Example~\ref{ex:tab}), a linear $\T$-algebra is a pair $(G, \nu)$ such that $\nu(x,y) = x + e(y)$ for some group endomorphism $e: G \to G$. Therefore, $\overline{\mathsf{Ab}}$ is equivalent to the category whose objects are pairs $(G,e)$, with $G$ an abelian group and $e: G \to G$ a group endomorphism, where a map $f: (G, e) \to (H, e^\prime)$ is a function $f: G \to H$ such that \[f( x + e(y) ) = f(x) + e^\prime \left( f(x+y) \right) - e^\prime(f(x)).\]
\end{exa}

We will now explain how $\mathbb{X}^{\T}_{lin}$ is a Cartesian difference category. Simply put, the Cartesian difference structure of $\mathbb{X}^{\T}_{lin}$ is the same as $\mathbb{X}$, and therefore we say that the forgetful functor preserves the Cartesian difference structure strictly. Starting with the Cartesian left additive structure: the finite product structure of $\mathbb{X}^{\T}_{lin}$ is defined in the same way as $\mathbb{X}^{\T}$, while the sum of $\T$-algebra morphisms and zero $\T$-algebra morphisms are defined as the sum and zero maps in $\mathbb{X}$. Of course, we must check that this is well-defined.

\begin{lem} $\mathbb{X}^{\T}_{lin}$ is a Cartesian left additive category.
\end{lem}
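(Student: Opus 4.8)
The plan is to present $\mathbb{X}^{\T}_{lin}$ as a full subcategory of the Eilenberg--Moore category $\mathbb{X}^{\T}$ that is closed under the relevant structure, and then transport the Cartesian left additive structure from $\mathbb{X}$. Since $\mathbb{X}^{\T}$ is already known to be Cartesian with products $(A,\nu)\times(B,\omega) = (A \times B, (\nu \times \omega)\circ \phi_{A,B})$ and terminal object $(\top,0)$, the first task is to check that these constructions stay inside $\mathbb{X}^{\T}_{lin}$, i.e.\ that they produce \emph{linear} $\T$-algebras. The structure map of the terminal object is the zero map, which is linear by Lemma~\ref{id-linear}; and the structure map $(\nu \times \omega)\circ \phi_{A,B}$ of a product is a composite of a product of linear maps with $\phi_{A,B}$, which is linear by Lemma~\ref{lem:t-prod}, hence linear by Lemma~\ref{comp-linear}. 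Since the projections and pairings are the same as in $\mathbb{X}$ and are already $\T$-algebra morphisms in $\mathbb{X}^{\T}$, this makes $\mathbb{X}^{\T}_{lin}$ a Cartesian category with the same finite product structure as $\mathbb{X}$.

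Next I would define the additive structure on hom-sets by inheritance, taking $f + g$ and $0$ as computed in $\mathbb{X}$; the content is to verify that these are again $\T$-algebra morphisms. For the zero morphism $0: (A,\nu) \to (B,\omega)$, using $\T(0) = 0$ (Lemma~\ref{T-sum}) and that $\omega$, being linear, is additive (Lemma~\ref{add-linear}), we get $\omega \circ \T(0) = \omega \circ 0 = 0 = 0 \circ \nu$. For the sum of two morphisms $f, g: (A,\nu) \to (B,\omega)$, the key computation is
\[
\omega \circ \T(f + g) = \omega \circ (\T(f) + \T(g)) = \omega \circ \T(f) + \omega \circ \T(g) = f \circ \nu + g \circ \nu = (f + g) \circ \nu,
\]
where the first equality is Lemma~\ref{T-sum}, the second uses additivity of $\omega$, the third uses that $f$ and $g$ are $\T$-algebra morphisms, and the last is left additivity (precomposition preserves sums).

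Finally, I would observe that each hom-set of $\mathbb{X}^{\T}_{lin}$ is thereby a submonoid of the commutative monoid $\mathbb{X}(A,B)$, hence itself a commutative monoid; that precomposition preserves this additive structure because it does so in $\mathbb{X}$; and that the projections are linear (Lemma~\ref{id-linear}), hence additive (Lemma~\ref{add-linear}). Together these give all the axioms of a Cartesian left additive category.

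The main obstacle is precisely the closure of the hom-sets under addition, and this is exactly where the restriction to \emph{linear} $\T$-algebras is essential: the step $\omega \circ (\T(f) + \T(g)) = \omega \circ \T(f) + \omega \circ \T(g)$ requires the codomain structure map $\omega$ to be additive, which is guaranteed for linear algebras via Lemma~\ref{add-linear} but fails for a general $\T$-algebra, as discussed before the statement.
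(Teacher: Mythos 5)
Your proof is correct and follows essentially the same route as the paper's: closure of linear $\T$-algebras under products via linearity of $\phi_{A,B}$ and Lemma~\ref{comp-linear}, linearity of the zero structure map for the terminal object, and the same computation showing sums and zero maps of morphisms remain $\T$-algebra morphisms, with additivity of $\omega$ (Lemma~\ref{add-linear}) doing the essential work. If anything, your attribution of the final step $f \circ \nu + g \circ \nu = (f+g)\circ\nu$ to left additivity (precomposition preserving sums) is more precise than the paper's tag at that point.
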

\begin{proof} We must first check that if $(A, \nu)$ and $(B, \omega)$ are linear $\T$-algebras, then so is their product $(A, \nu) \times (B, \omega)$, that is, we must show that $(\nu \times \omega) \circ \phi_{A,B}$ is linear. However, since $\nu$, $\omega$, and $\phi_{A,B}$ are linear, it follows from Lemma~\ref{comp-linear} that $(\nu \times \omega) \circ \phi_{A,B}$ is also linear. Therefore, the product of linear $\T$-algebras is also a linear $\T$-algebra. Next, since zero maps are linear, $(\top, 0)$ is also a linear $\T$-algebra. Therefore $\mathbb{X}^{\T}_{lin}$ has finite products since it is a full subcategory of $\mathbb{X}^\mathsf{T}$. Next, we show that if $f: (A, \nu) \to (B, \omega)$ and $g: (A, \nu) \to (B, \omega)$ are $\T$-algebra morphisms between linear $\T$-algebras, then their sum $f +g: (A, \nu) \to (B, \omega)$ is also a $\T$-algebra morphism. This follows from the fact that linear maps are additive (Lemma~\ref{add-linear}):
\begin{align*}
\omega \circ \T(f+ g) &=~ \omega \circ (\T(f) + \T(g))  \tag{by Prop.\ref{T-sum}} \\
&=~ \omega \circ \T(f) + \omega \circ \T(g)\\
&=~ f \circ \nu + g \circ \nu \tag{by $\T$-alg. morph. def.} \\
&=~ (f + g) \circ \nu  \tag{$\nu$ is additive}
\end{align*}
So we have that $f+g$ is a $\T$-algebra morphism. Similarly, we must show that zero maps between linear $\T$-algebras $(A, \nu)$ and $(B, \omega)$ are $\T$-algebra morphisms:
\begin{align*}
\omega \circ \T(0) &=~ \omega \circ 0  \tag{by Prop.\ref{T-sum}} \\
&=~ 0 \\
&=~ 0 \circ \nu \tag{$\nu$ is additive}
\end{align*}
Therefore $0: (A, \nu) \to (B, \omega)$ is a $\T$-algebra morphism. And clearly, the remaining Cartesian left additive structure axioms hold since composition in $\mathbb{X}^{\T}_{lin}$ is the same as in $\mathbb{X}$. So we conclude that $\mathbb{X}^{\T}_{lin}$ is a Cartesian left additive category.
\end{proof}

Similarly, the infinitesimal extension and the difference combinator of $\mathbb{X}^{\T}_{lin}$ are defined in the same way as in $\mathbb{X}$. Once again, while the infinitesimal extension and difference combinator axioms will automatically hold, we will have to check that this is all well-defined.

 \begin{prop} For a Cartesian difference category $\mathbb{X}$, the category of linear $\T$-algebras $\mathbb{X}^{\T}_{lin}$ is a Cartesian difference category such that the obvious forgetful functor $\mathsf{U}: \mathbb{X}^{\T}_{lin} \to \mathbb{X}$ preserves the Cartesian difference structure strictly.
\end{prop}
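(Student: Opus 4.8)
The plan is to exploit the fact that the forgetful functor $\mathsf{U}: \mathbb{X}^{\T}_{lin} \to \mathbb{X}$ is faithful and that all of the proposed structure on $\mathbb{X}^{\T}_{lin}$ (composition, sums, zero maps, products, projections, infinitesimal extension, and difference combinator) is defined to coincide, on underlying maps, with the corresponding structure in $\mathbb{X}$. Consequently each of the axioms \ref{Eax1}--\ref{Eax3} and \ref{CdCax0}--\ref{CdCax7} is an equation between maps of $\mathbb{X}$ that already holds in $\mathbb{X}$, so the only genuine content is to check that $\varepsilon$ and $\dd$ are \emph{well-defined} as operations on $\T$-algebra morphisms. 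Concretely, there are exactly two things to verify: (a) if $f: (A, \nu) \to (B, \omega)$ is a $\T$-algebra morphism then so is $\varepsilon(f)$; and (b) if $f: (A, \nu) \to (B, \omega)$ is a $\T$-algebra morphism then $\dd[f]$ is a $\T$-algebra morphism of type $(A, \nu) \times (A, \nu) \to (B, \omega)$, where the product algebra carries the structure $\nu_2 := (\nu \times \nu) \circ \phi_{A,A}$.

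For (a), the key inputs are Lemma~\ref{T-diff}, which gives $\T(\varepsilon(f)) = \varepsilon(\T(f))$, together with the linearity of the algebra map $\omega$. Using that $\omega$ is linear, Lemma~\ref{epsilon-linear} and \ref{Eax2} yield $\omega \circ \varepsilon(h) = \varepsilon(\omega \circ h)$ for any suitable $h$. Thus I would compute
\[ \omega \circ \T(\varepsilon(f)) = \omega \circ \varepsilon(\T(f)) = \varepsilon(\omega \circ \T(f)) = \varepsilon(f \circ \nu) = \varepsilon(f) \circ \nu, \]
where the third equality uses that $f$ is a $\T$-algebra morphism and the last uses \ref{Eax2}. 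This is precisely the $\T$-algebra morphism condition for $\varepsilon(f)$.

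Step (b) is where the real work lies, and it is exactly the step that forces the restriction to \emph{linear} $\T$-algebras. The idea is to differentiate the defining equation $\omega \circ \T(f) = f \circ \nu$. Applying the chain rule \ref{CdCax5} to both sides and using that $\nu$ and $\omega$ are linear (so $\dd[\nu] = \nu \circ \pi_1$ and $\dd[\omega] = \omega \circ \pi_1$), I expect the two sides to collapse to
\[ \omega \circ \dd[\T(f)] = \dd[f] \circ (\nu \times \nu). \]
Then, invoking Lemma~\ref{T-diff} in the form $\T(\dd[f]) = \dd[\T(f)] \circ \phi_{A,A}$, I can conclude
\[ \omega \circ \T(\dd[f]) = \omega \circ \dd[\T(f)] \circ \phi_{A,A} = \dd[f] \circ (\nu \times \nu) \circ \phi_{A,A} = \dd[f] \circ \nu_2, \]
which says precisely that $\dd[f]$ is a $\T$-algebra morphism $(A, \nu) \times (A, \nu) \to (B, \omega)$. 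The main obstacle is this computation: without linearity of $\nu$ and $\omega$ the derivatives $\dd[\nu]$ and $\dd[\omega]$ would not simplify to $\nu \circ \pi_1$ and $\omega \circ \pi_1$, and the argument would break down. This is the technical reason, anticipated in the discussion preceding the definition of $\mathbb{X}^{\T}_{lin}$, that ordinary or merely additive $\T$-algebras do not suffice.

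Finally, with (a) and (b) established, $\varepsilon$ and $\dd$ genuinely restrict to operations on $\mathbb{X}^{\T}_{lin}$. Since $\mathbb{X}^{\T}_{lin}$ is already known to be a Cartesian left additive category and its additive, product, infinitesimal, and difference structure is literally that of $\mathbb{X}$ on underlying maps, every axiom of a Cartesian difference category holds in $\mathbb{X}^{\T}_{lin}$ because it holds in $\mathbb{X}$ and $\mathsf{U}$ is faithful and preserves all the relevant operations strictly. In particular $\mathsf{U}$ preserves the Cartesian difference structure on the nose, which completes the argument.
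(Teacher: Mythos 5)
Your proposal is correct and follows essentially the same route as the paper: the only substantive content is verifying that $\varepsilon(f)$ and $\dd[f]$ remain $\T$-algebra morphisms, which you do with exactly the paper's ingredients --- Lemma~\ref{T-diff} together with $\omega \circ \varepsilon(h) = \varepsilon(\omega \circ h)$ for part (a), and Lemma~\ref{chain-linear} (your ``differentiate the defining equation'' step, using $\dd[\nu] = \nu \circ \pi_1$ and $\dd[\omega] = \omega \circ \pi_1$) plus $\T(\dd[f]) = \dd[\T(f)] \circ \phi_{A,A}$ for part (b). Your closing observation that all axioms are then inherited strictly because the structure on $\mathbb{X}^{\T}_{lin}$ coincides with that of $\mathbb{X}$ on underlying maps is precisely how the paper concludes as well.
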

\begin{proof} We first show that if $f: (A, \nu) \to (B, \omega)$ is a $\T$-algebra morphism between linear $\T$-algebras, then so is $\varepsilon(f)$:
\begin{align*}
\omega \circ \T(\varepsilon(f)) &=~\omega \circ \varepsilon(\T(f))  \tag{by Prop.\ref{T-diff}} \\
&=~ \varepsilon(\omega \circ \T(f)) \tag{by Lem.\ref{epsilon-linear}} \\
&=~ \varepsilon(f \circ \nu)  \tag{by $\T$-alg. morph. def.} \\
&=~ \varepsilon(f) \circ \nu \tag{by $\varepsilon$ def.}
\end{align*}
So we have that $\varepsilon(f): (A, \nu) \to (B, \omega)$ is a $\T$-algebra morphism. Next, we check that the derivative $\dd[f]: A \times A \to B$ is a $\T$-algebra morphism of type $(A, \nu) \times (A, \nu) \to (B, \omega)$.
\begin{align*}
\omega \circ \T(\dd[f]) &=~\omega \circ \dd[\T(f)] \circ \phi_{A,A}  \tag{by Prop.\ref{T-diff}} \\
&=~ \dd[\omega \circ \T(f)] \circ \phi_{A,A} \tag{by Lem.\ref{chain-linear} since $\omega$ is linear} \\
&=~ \dd[f \circ \nu] \circ \phi_{A,A}  \tag{by $\T$-alg. morph. def.} \\
&=~ \dd[f] \circ (\nu \times \nu) \circ \phi_{A,A} \tag{by Lem.\ref{chain-linear} since $\nu$ is linear}
\end{align*}
So we have that $\dd[f]: (A, \nu) \times (A, \nu) \to (B, \omega)$ is a $\T$-algebra morphism. Since composition and the Cartesian left additive structure of $\mathbb{X}^{\T}_{lin}$ is the same as $\mathbb{X}$, it automatically follows that $\varepsilon$ is an infinitesimal extension and $\dd$ is a difference combinator on $\mathbb{X}^{\T}_{lin}$. Therefore, we conclude that $\mathbb{X}^{\T}_{lin}$ is a Cartesian difference category, and clearly the forgetful functor preserves the Cartesian difference structure strictly.
\end{proof}

\subsection{The Kleisli Category of \texorpdfstring{$\mathsf{T}$}{T}}%
\label{Kleislisec}
The construction found here is different from the one found in the conference paper~\cite{alvarez2020cartesian}. Indeed, the proposed infinitesimal extension and difference combinator in~\cite{alvarez2020cartesian} were based on the ones that appeared in~\cite{alvarez2019change}. Unfortunately, we have found that said proposed infinitesimal extension and difference combinator failed to satisfy {\bf [C$\dd$.2]} and therefore both of the aforementioned results in~\cite{alvarez2020cartesian,alvarez2019change} are incorrect. We rectify this mistake here by changing the infinitesimal extension to the correct one, while keeping the difference combinator the same.

Recall that the Kleisli category of the monad $(\mathsf{T}, \mu, \eta)$ is defined as the category $\mathbb{X}_\mathsf{T}$ whose objects are the objects of $\mathbb{X}$, and where a map $A \to B$ in $\mathbb{X}_\mathsf{T}$ is a map $f: A \to \mathsf{T}(B)$ in $\mathbb{X}$, which would be a pair $f = \langle f_0, f_1 \rangle$ where $f_j: A \to B$. The identity map in $\mathbb{X}_\mathsf{T}$ is the monad unit $\eta_A: A \to \mathsf{T}(A)$, while composition of Kleisli maps $f: A \to \mathsf{T}(B)$ and $g: B \to \mathsf{T}(C)$ is defined as the composite $\mu_C \circ \mathsf{T}(g) \circ f$. To distinguish between composition in $\mathbb{X}$ and $\mathbb{X}_\mathsf{T}$, we denote the Kleisli composition as follows:
\begin{equation}\label{Kcomp1}\begin{gathered}  g \circ^\T f = \mu_C \circ \mathsf{T}(g) \circ f
 \end{gathered}\end{equation}
If $f=\langle f_0, f_1 \rangle$ and $g=\langle g_0, g_1 \rangle$, then their Kleisli composition can be worked out to be:
\begin{equation}\label{Kcomp2}\begin{gathered}  g \circ^\T f = \langle g_0, g_1 \rangle \circ^\T \langle f_0, f_1 \rangle = \left \langle g_0 \circ f_0, \dd[g_0] \circ \langle f_0, f_1 \rangle + g_1 \circ (f_0 \oplus f_1) \right \rangle  \end{gathered}\end{equation}
Kleisli maps can be understood as ``generalized'' vector fields. Indeed, $\T(A)$ should be thought of as the tangent bundle over $A$, and therefore a vector field would be a map $\langle 1, f \rangle: A \to \T(A)$, which is of course also a Kleisli map. For more details on the intuition behind this Kleisli category see~\cite{cockett2014differential}. Furthermore, in general, the Kleisli category of a monad is equivalently to a full subcategory of the Eilenberg-Moore consisting of the free algebras. In this case, note that every free $\T$-algebra $(\T(A), \mu_A)$ is also an object in $\mathbb{X}^{\T}_{lin}$, since $\mu_A$ is linear. Therefore, $\mathbb{X}_\mathsf{T}$ is equivalent to the full subcategory of free $\T$-algebras of $\mathbb{X}^{\T}_{lin}$.

\begin{exa}
For the tangent bundle monad on the Cartesian differential category $\mathsf{SMOOTH}$ (Example~\ref{ex:tsmooth}), a Kleisli map is a smooth function $F: \mathbb{R}^n \to \mathbb{R}^m \times \mathbb{R}^m$, which is interpreted as a pair of smooth functions $F = \langle F_0, F_1 \rangle$ where $F_i: \mathbb{R}^n \to \mathbb{R}^m$. The composition of Kleisli maps $F = \langle F_0, F_1 \rangle$ and $G = \langle G_0, G_1 \rangle$ is computed out to be $(G \circ^\T F)(\vec x) = ( G_0(F_0(\vec x)), \mathsf{D}[G_0](F_0(\vec x), F_1(\vec x)) + G_1(F_0(\vec x))$. Among these Kleisli maps are the vector fields on $\mathbb{R}^n$, which are precisely the Kleisli maps of the form $F = \langle 1_{\mathbb{R}^n}, f \rangle$, for some smooth function $f: \mathbb{R}^n \to \mathbb{R}^n$.
\end{exa}

\begin{exa}
For the tangent bundle monad on the Cartesian differential category $\overline{\mathsf{Ab}}$ (Example~\ref{ex:tab}), a Kleisli map is a function $f: G \to H \times H$, which is interpreted as a pair of functions $f = \langle f_0, f_1 \rangle$ where $f_i: G \to H$. In this case, the composition of Kleisli maps $f = \langle f_0, f_1 \rangle$ and $g = \langle g_0, g_1 \rangle$ is $(g \circ^\T g)(x) = ( g_0(g_0(x)), g_0\left( f_0(x) + f_1(x) \right) - g_0(f_0(x)) + g_1(f_0(x))$.
\end{exa}

We now wish to explain how the Kleisli category $\mathbb{X}_\mathsf{T}$ is again a Cartesian difference category. We begin by exhibiting the Cartesian left additive structure of the Kleisli category $\mathbb{X}_\mathsf{T}$. Generally, the Kleisli category does not automatically inherit the product structure of the base category, even if the Eilenberg-Moore category does. However, since $\T$ preserves finite products up to isomorphism, it follows that its Kleisli category has finite products. As such, the product of objects in $\mathbb{X}_\mathsf{T}$ is defined as $A \times B$ with projections $\pi^{\mathsf{T}}_0: A \times B \to \mathsf{T}(A)$ and $\pi^{\mathsf{T}}_1: A \times B \to \mathsf{T}(B)$ defined respectively as $\pi^{\mathsf{T}}_0 = \langle \pi_0, 0 \rangle$ and $\pi^{\mathsf{T}}_1 = \langle \pi_1, 0 \rangle$, and the pairing of Kleisli maps $f=\langle f_0, f_1 \rangle$ and $g=\langle g_0, g_1 \rangle$ is defined as:
\begin{equation}\label{Kpair}\begin{gathered}  \langle f, g \rangle^\mathsf{T} = \phi^{-1} \circ \pair{f}{g} = \four{f_0}{g_0}{f_1}{g_1} \end{gathered}\end{equation}
where recall $\phi^{-1}:\T(A) \times\T(B) \to\T(A \times B)$ is the inverse of $\phi$ as defined in (\ref{phidef1}). The terminal object is again $\top$ and where the unique map to the terminal object is $!^{\mathsf{T}}_A = 0$. The sum of Kleisli maps $f=\langle f_0, f_1 \rangle$ and $g=\langle g_0, g_1 \rangle$ is defined as:
\[f +^\mathsf{T} g = f + g = \langle f_0 + g_0, f_1 + g_1 \rangle\]
and the zero Kleisli maps is simply $0^\T = 0 = \langle 0, 0 \rangle$. Therefore we conclude that the Kleisli category of the tangent monad is a Cartesian left additive category.

\begin{lem} $\mathbb{X}_\mathsf{T}$ is a Cartesian left additive category.
\end{lem}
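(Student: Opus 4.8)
The plan is to transport the Cartesian left additive structure of $\mathbb{X}$ across the Kleisli construction, working as much as possible at the level of the monad rather than with the component formula~(\ref{Kcomp2}). First observe that the hom-set $\mathbb{X}_\mathsf{T}(A,B)$ is by definition $\mathbb{X}(A, \mathsf{T}(B))$, so it is already a commutative monoid under the pointwise sum $f +^\mathsf{T} g = f + g$ with unit $0^\mathsf{T} = 0$, and no work is needed there. For the Cartesian structure, since $\mathsf{T}$ preserves finite products up to the natural isomorphism $\phi$ of~(\ref{phidef1}) (Lemma~\ref{lem:t-prod}), the Kleisli category inherits finite products, with projections and pairing given by~(\ref{Kpair}); verifying their universal property is the standard computation for the Kleisli category of a product-preserving monad, using the monad laws~(\ref{monaddef}) and the defining equations of $\phi$. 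Thus the crux is to check that this product structure is compatible with the additive structure, i.e.\ that Kleisli pre-composition preserves sums and that the Kleisli projections are additive.

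To show pre-composition preserves the additive structure, fix a Kleisli map $h: A \to \mathsf{T}(B)$ and parallel Kleisli maps $f, g : B \to \mathsf{T}(C)$. Unfolding~(\ref{Kcomp1}), $(f +^\mathsf{T} g) \circ^\mathsf{T} h = \mu_C \circ \mathsf{T}(f + g) \circ h$. By Lemma~\ref{T-sum} we have $\mathsf{T}(f + g) = \mathsf{T}(f) + \mathsf{T}(g)$, and since pre-composition by $h$ in $\mathbb{X}$ distributes over sums this equals $\mu_C \circ \bigl( \mathsf{T}(f) \circ h + \mathsf{T}(g) \circ h \bigr)$. Now $\mu_C$ is linear (Lemma~\ref{lem:t-prod}), hence additive by Lemma~\ref{add-linear}, so post-composition by $\mu_C$ also distributes over sums, giving $\mu_C \circ \mathsf{T}(f) \circ h + \mu_C \circ \mathsf{T}(g) \circ h = f \circ^\mathsf{T} h +^\mathsf{T} g \circ^\mathsf{T} h$. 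The same reasoning together with $\mathsf{T}(0) = 0$ (again Lemma~\ref{T-sum}) yields $0^\mathsf{T} \circ^\mathsf{T} h = \mu_C \circ 0 \circ h = 0$. Hence $\circ^\mathsf{T}$ is left additive.

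Finally, for additivity of the projections, note that $\pi^\mathsf{T}_0 = \langle \pi_0, 0 \rangle = \eta_A \circ \pi_0$, and likewise for $\pi^\mathsf{T}_1$. For any $f : C \to \mathsf{T}(A \times B)$, functoriality of $\mathsf{T}$ together with the monad identity $\mu_A \circ \mathsf{T}(\eta_A) = 1_{\mathsf{T}(A)}$ from~(\ref{monaddef}) gives
\[ \pi^\mathsf{T}_0 \circ^\mathsf{T} f = \mu_A \circ \mathsf{T}(\eta_A \circ \pi_0) \circ f = \mu_A \circ \mathsf{T}(\eta_A) \circ \mathsf{T}(\pi_0) \circ f = \mathsf{T}(\pi_0) \circ f. \]
Since $\pi_0$ is linear (Lemma~\ref{id-linear}), $\mathsf{T}(\pi_0)$ is linear by Lemma~\ref{lem:t-prod}, hence additive; thus post-composition by $\mathsf{T}(\pi_0)$ preserves sums and zeros, so $\pi^\mathsf{T}_0 \circ^\mathsf{T} (f + g) = \pi^\mathsf{T}_0 \circ^\mathsf{T} f +^\mathsf{T} \pi^\mathsf{T}_0 \circ^\mathsf{T} g$ and $\pi^\mathsf{T}_0 \circ^\mathsf{T} 0 = 0$, and symmetrically for $\pi^\mathsf{T}_1$. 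I expect the only genuinely delicate point to be the bookkeeping in verifying the product universal property through $\phi$; once the identity $\pi^\mathsf{T}_i \circ^\mathsf{T} f = \mathsf{T}(\pi_i) \circ f$ is isolated, additive compatibility is immediate from the linearity of $\mu$ and of the projections, so the additive part of the argument is routine.
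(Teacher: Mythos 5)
Your proof is correct and follows essentially the same route as the paper's: both establish the Cartesian structure via $\mathsf{T}$ preserving products (Lemma~\ref{lem:t-prod}), prove left additivity of $\circ^\T$ from $\T(f+g) = \T(f) + \T(g)$ and the additivity of the linear map $\mu$, and reduce additivity of the projections to the identity $\pi_i^\T \circ^\T f = \T(\pi_i) \circ f$ obtained from $\pi_i^\T = \eta \circ \pi_i$, functoriality, and the monad laws, together with linearity of $\T(\pi_i)$. Your isolation of that identity up front is a slightly cleaner packaging of the same computation the paper performs inline.
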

\begin{proof} As explained above, by Lemma~\ref{lem:t-prod}, $\T$ preserves the finite product structure, and thus it follows that the Kleisli category $\mathbb{X}_\mathsf{T}$ is a Cartesian category. So it remains to show that $\mathbb{X}_\mathsf{T}$ is a left additive category and that the projection maps are additive. We start by showing that the proposed additive structure is compatible with Kleisli composition. This follows from the fact that $\T$ preserves the additive structure by Lemma~\ref{lem:t-prod}.(ii) and Lemma~\ref{add-linear}, that $\mu$ is linear and therefore also additive. % chktex 36
\begin{align*}
(f +^\T g) \circ^\T x &=~ (f+g) \circ^\T x \\
&=~ \mu \circ \T(f +g) \circ x \\
&=~ \mu \circ (\T(f) + \T(g) ) \circ x \tag{Lem\ref{lem:t-prod}.ii} \\
&=~ \mu \circ (\T(f) \circ x + \T(g) \circ x) \\
&=~ \mu \circ \T(f) \circ x + \mu \circ \T(g) \circ x \tag{$\mu$ is additive} \\
&=~ f \circ^\T x + g \circ^\T x \\
&=~ f \circ^\T x +^\T g \circ^\T x \\ \\
0^\T \circ x &=~ 0 \circ^\T x \\
&=~ \mu \circ\T(0) \circ x \\
&=~ \mu \circ 0 \circ x \tag{Lem\ref{lem:t-prod}.ii} \\
&=~0 \circ x \tag{$\mu$ is additive} \\
&=~ 0
\end{align*}
So we have that $\mathbb{X}_\mathsf{T}$ is a left additive category. Next we show that the projection maps are additive. Note that $\pi_i^\T = \eta \circ \pi_i$ and that by Lemma\ref{lem:t-prod}.(iii), $\T(\pi_i) = \pi_i \times \pi_i$ is linear and therefore also additive. So we have that: % chktex 36
\begin{align*}
    \pi^\T_i \circ^\T (x + y) &=~ (\eta \circ \pi_i) \circ^\T (x+y) \\
    &=~ \mu \circ \T(\eta \circ \pi_i) \circ (x+y) \\
    &=~ \mu \circ \T(\eta) \circ \T(\pi_i) \circ (x+y) \tag{$\T$ is a functor} \\
    &=~ \T(\pi_i) \circ (x+y) \tag{Monad identities} \\
    &=~ \T(\pi_i) \circ x + \T(\pi_i) \circ y \tag{$\T(\pi_i)$ is additive} \\
 &=~\mu \circ \T(\eta) \circ \T(\pi_i) \circ x + \mu \circ \T(\eta) \circ \T(\pi_i) \circ y \tag{Monad identities} \\
 &=~ \mu \circ \T(\eta \circ \pi_i) \circ x + \mu \circ \T(\eta \circ \pi_i) \circ y \tag{$\T$ is a functor} \\
 &=~ (\eta \circ \pi_i) \circ^\T x  + (\eta \circ \pi_i) \circ^\T y \\
 &=~ \pi^\T_i \circ^\T x + \pi^\T_i \circ^\T y \\ \\
 \pi^\T_i \circ^\T 0^\T &=~ (\eta \circ \pi_i) \circ^\T 0 \\
 &=~ \mu \circ \T(\eta \circ \pi_i) \circ 0 \\
 &=~ \mu \circ \T(\eta) \circ \T(\pi_i) \circ 0 \tag{$\T$ is a functor} \\
 &=~ \T(\pi_i) \circ 0 \tag{Monad identities} \\
 &=~ 0 \tag{$\T(\pi_i)$ is additive}
\end{align*}
So we conclude that $\mathbb{X}_\mathsf{T}$ is a Cartesian left additive category.
\end{proof}

The infinitesimal extension $\varepsilon^\T$ for the Kleisli category is the same as the infinitesimal extension of the base category, that is, for a Kleisli map $f= \langle f_0, f_1 \rangle$:
\[ \varepsilon^\T(f) = \varepsilon(f) = \varepsilon(\pair{f_0}{f_1}) =  \pair{\varepsilon(f_0)}{\varepsilon(f_1)}\]
where the last equality is due to Lemma~\ref{lem:ep-pair}. It is important to note that this is the major correction made from the conference paper version. Indeed, the infinitesimal extension suggested in~\cite{alvarez2020cartesian} was based on the change action structure suggested in~\cite{alvarez2019change}. However, it unfortunately turns out that a result in~\cite{alvarez2019change} was incorrect, since~\ref{CADax2} actually fails, and therefore the infinitesimal extension and difference combinator suggested in~\cite{alvarez2020cartesian} do not satisfy~\ref{CdCax2}. Luckily, the infinitesimal extension provide in this paper does work, as we carefully prove below.

\begin{lem} $\varepsilon^\T$ is an infinitesimal extension on $\mathbb{X}_\mathsf{T}$.
\end{lem}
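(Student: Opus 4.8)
The plan is to check the three infinitesimal-extension axioms \ref{Eax1}, \ref{Eax2}, and \ref{Eax3} for $\varepsilon^\T$ directly, taking advantage of the fact that $\varepsilon^\T(f)$ is by definition just the base-category operator applied to the underlying map, $\varepsilon^\T(f) = \varepsilon(\pair{f_0}{f_1})$, and that the additive structure of $\mathbb{X}_\mathsf{T}$ coincides with that of $\mathbb{X}$. Since sums and zeros of Kleisli maps are computed exactly as in $\mathbb{X}$, axiom \ref{Eax1} for $\varepsilon^\T$ is immediate from \ref{Eax1} for $\varepsilon$ in $\mathbb{X}$; no real computation is needed here.

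The crux is \ref{Eax2}, namely that $\varepsilon^\T(g \circ^\T f) = \varepsilon^\T(g) \circ^\T f$. I would unfold the Kleisli composition using (\ref{Kcomp1}) as $g \circ^\T f = \mu_C \circ \T(g) \circ f$ and then push $\varepsilon$ to the left using \ref{Eax2} in $\mathbb{X}$, obtaining $\varepsilon(\mu_C \circ \T(g) \circ f) = \varepsilon(\mu_C) \circ \T(g) \circ f$. The remaining task is to transport the $\varepsilon$ from $\mu_C$ across to $g$: by Lemma~\ref{lem:mu-ep} we have $\varepsilon(\mu_C) = \mu_C \circ \varepsilon(1_{\T(C)})$, and since $\varepsilon(h) = \varepsilon(1) \circ h$ (Lemma~\ref{lem:epbij}) we may rewrite $\varepsilon(1_{\T(C)}) \circ \T(g) = \varepsilon(\T(g))$, which by Lemma~\ref{T-diff} equals $\T(\varepsilon(g))$. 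Assembling these gives $\mu_C \circ \T(\varepsilon(g)) \circ f = \varepsilon(g) \circ^\T f = \varepsilon^\T(g) \circ^\T f$, as required. This is the step I expect to be the main obstacle, precisely because it is the only place where the compatibility of $\varepsilon$ with both the monad multiplication $\mu$ and the functor $\T$ is essential.

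Finally, for \ref{Eax3} I would compute both sides explicitly. The Kleisli projections are $\pi^\T_i = \pair{\pi_i}{0}$ and the Kleisli identity on $A \times B$ is $\eta_{A \times B} = \pair{1_{A \times B}}{0}$, so $\varepsilon^\T(\pi^\T_i) = \pair{\varepsilon(\pi_i)}{0}$ and $\varepsilon^\T(\eta_{A \times B}) = \pair{\varepsilon(1_{A \times B})}{0}$ by Lemma~\ref{lem:ep-pair} together with $\varepsilon(0) = 0$. Expanding the Kleisli composite $\pi^\T_i \circ^\T \varepsilon^\T(\eta_{A \times B})$ with formula (\ref{Kcomp2}) yields $\pair{\pi_i \circ \varepsilon(1_{A \times B})}{\dd[\pi_i] \circ \pair{\varepsilon(1_{A \times B})}{0}}$; since $\dd[\pi_i] = \pi_i \circ \pi_1$ by \ref{CdCax3}, the second component collapses to $0$, leaving $\pair{\pi_i \circ \varepsilon(1_{A \times B})}{0}$. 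The two sides then agree exactly by the base-category axiom \ref{Eax3}, $\varepsilon(\pi_i) = \pi_i \circ \varepsilon(1_{A \times B})$. With all three axioms verified, $\varepsilon^\T$ is an infinitesimal extension on $\mathbb{X}_\mathsf{T}$.
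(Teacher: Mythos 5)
Your proof is correct, and on the crux axiom~\ref{Eax2} it follows the paper's argument verbatim: unfold $g \circ^\T f = \mu \circ \T(g) \circ f$, pull $\varepsilon$ onto $\mu$ by the base~\ref{Eax2}, convert $\varepsilon(\mu) = \mu \circ \varepsilon(1_{\T(C)})$ via Lemma~\ref{lem:mu-ep}, and slide the $\varepsilon$ across to $g$ using $\varepsilon(1) \circ \T(g) = \varepsilon(\T(g)) = \T(\varepsilon(g))$ by Lemma~\ref{T-diff} --- exactly the paper's chain of lemmas. Axiom~\ref{Eax1} is likewise dispatched the same way in both proofs. Where you genuinely diverge is~\ref{Eax3}: the paper stays abstract, writing $\pi_i^\T = \eta \circ \pi_i$ and chasing through naturality of $\eta$, Lemma~\ref{T-diff}, the base~\ref{Eax3}, functoriality of $\T$, Lemma~\ref{lem:mu-ep}, and the monad unit law to reassemble the Kleisli composite $\pi_i^\T \circ^\T \varepsilon^\T(\eta)$ without ever unfolding it, whereas you compute both sides in components via the explicit formula~(\ref{Kcomp2}) together with~\ref{CdCax3} and Lemma~\ref{lem:ep-pair}. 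Your route is shorter and more concrete; the paper's buys independence from the component description of Kleisli composition, which keeps the argument uniform with the style of the surrounding proofs. One small slip worth noting: when you expand $\pi_i^\T \circ^\T \varepsilon^\T(\eta)$ you silently drop the term $g_1 \circ (f_0 \oplus f_1)$ from~(\ref{Kcomp2}); this is harmless here since $g_1 = 0$ and $0 \circ h = 0$ by left additivity, but it should be stated rather than elided.
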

\begin{proof} We check that $\varepsilon^\T$ satisfies~\ref{Eax1},~\ref{Eax2}, and~\ref{Eax3}. For~\ref{Eax1}, we compute:
\begin{align*}
\varepsilon^\T(f +^\T g) &=~ \varepsilon^\T(f+g) \\
&=~ \varepsilon(f+g) \\
&=~ \varepsilon(f) + \varepsilon(g) \tag*{\ref{Eax1}} \\
&=~ \varepsilon(f) +^\T \varepsilon(g) \\
&=~ \varepsilon^\T(f) +^\T \varepsilon^\T(g) \\ \\
\varepsilon^\T(0^\T) &=~ \varepsilon(0^\T) \\
&=~ \varepsilon(0) \\
&=~ 0 \tag*{\ref{Eax1}} \\
&=~ 0^\T
\end{align*}
Next for~\ref{Eax2}, we compute:
\begin{align*}
\varepsilon^\T(g \circ^\T f) &=~ \varepsilon(g \circ^\T f) \\
&=~ \varepsilon(\mu \circ \T(g) \circ f) \\
&=~ \varepsilon(\mu) \circ \T(g) \circ f \tag*{\ref{Eax2}} \\
&=~ \mu \circ \varepsilon(1) \circ \T(g) \circ f \tag{Lem.\ref{lem:t-prod}.(vi)}\\
&=~ \mu \circ \varepsilon(\T(g)) \circ f \\
&=~ \mu \circ \T(\varepsilon(g)) \circ f \tag{Lem.\ref{lem:t-prod}.(v)} \\
&=~ \varepsilon(g) \circ^\T f \\
&=~ \varepsilon^\T(g) \circ^\T f
\end{align*}
Lastly for~\ref{Eax3}, recall again that $\pi_i^\T = \eta \circ \pi_i$, so we can compute:
\begin{align*}
    \varepsilon^\T(\pi_i^\T) &=~ \varepsilon(\pi_i^\T) \\
    &=~ \varepsilon(\eta \circ \pi) \\
    &=~ \varepsilon(\T(\pi_i) \circ \eta) \tag{Nat. of $\eta$} \\
    &=~ \varepsilon(\T(\pi_i)) \circ \eta \tag*{\ref{Eax2}} \\
    &=~ \T(\varepsilon(\pi_i)) \circ \eta \tag{Lem.\ref{lem:t-prod}.(v)} \\
    &=~ \T(\pi_i \circ \varepsilon(1)) \circ \eta\tag*{\ref{Eax3}} \\
    &=~ \T(\pi_i) \circ \T(\varepsilon(1)) \circ \eta \tag{$\T$ is a functor} \\
    &=~ \T(\pi_i) \circ \eta \circ \varepsilon(1) \tag{Nat. of $\eta$} \\
    &=~ \T(\pi_i) \circ \varepsilon(\eta) \tag{Lem.\ref{lem:t-prod}.(vi)}\\
    &=~ \mu \circ \T(\eta) \circ \T(\pi_i) \circ \varepsilon(\eta) \tag{Monad identities} \\
       &=~ \mu \circ \T(\eta \circ \pi_i) \circ \varepsilon(\eta) \tag{$\T$ is a functor} \\
    &=~ (\eta \circ \pi_i) \circ^\T \varepsilon(\eta) \\
    &=~ (\eta \circ \pi_i) \circ^\T \varepsilon^\T(\eta) \\
    &=~ \pi_i^\T \circ^\T \varepsilon^\T(\eta)
\end{align*}
So we conclude that $\varepsilon^\T$ is an infinitesimal extension on $\mathbb{X}_\mathsf{T}$.
\end{proof}

To define the difference combinator for the Kleisli category, first note that difference combinators by definition do not change the codomain. That is, if $f : A \to \T(B)$ is a Kleisli map, then the type of its derivative should be $A \times A \to \T(B)$, which coincides with the type of its derivative in $\mathbb{X}$. Therefore, the difference combinator $\dd^\T$ for the Kleisli category is defined to be the difference combinator of the base category, that is, for a Kleisli map $f= \langle f_0, f_1 \rangle$, its derivative is defined as:
\begin{equation}\label{Kd}\begin{gathered} \dd^\T[f] = \dd[f] = \langle \dd[f_0], \dd[f_1] \rangle
 \end{gathered}\end{equation}
 where the last equality is due to~\ref{CdCax4}. We note that this difference combinator was the one suggested in~\cite{alvarez2020cartesian} and is the derivative underlying the change action model structure suggested in~\cite{alvarez2019change}. However, as mentioned above, the difference combinator with the infinitesimal extension or change action suggested in those papers does not satisfy~\ref{CdCax2} or~\ref{CADax2}. Luckily, we are able to correct this and still obtain a positive result by carefully proving that the proposed infinitesimal extension and difference combinator in this paper does provide a Cartesian difference structure on the Kleisli category.

 \begin{prop} For a Cartesian difference category $\mathbb{X}$, the Kleisli category $\mathbb{X}_\mathsf{T}$ is a Cartesian difference category with infinitesimal extension $\varepsilon^\T$ and difference combinator $\dd^\T$.
\end{prop}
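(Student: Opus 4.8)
The plan is as follows. Since the preceding lemmas already establish that $\mathbb{X}_\mathsf{T}$ is a Cartesian left additive category and that $\varepsilon^\T$ is an infinitesimal extension, it remains only to verify the difference-combinator axioms \ref{CdCax0}--\ref{CdCax7} with respect to the Kleisli operations. My strategy is to unfold every Kleisli operation into the base category using the explicit formula (\ref{Kcomp2}) for composition, (\ref{Kpair}) for pairing, $\pi_i^\T = \eta \circ \pi_i$, $\eta_A = \pair{1_A}{0}$, together with $+^\T = +$, $0^\T = 0$, $\varepsilon^\T = \varepsilon$ and $\dd^\T = \dd$, so that each Kleisli axiom becomes an identity in $\mathbb{X}$ that can be discharged using the base-category axioms and the structural facts collected in Lemma~\ref{lem:t-prod}.

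For the axioms not involving Kleisli composition this is immediate. Axiom \ref{CdCax1} reduces verbatim to its base-category form since $\dd^\T$, $\varepsilon^\T$, $+^\T$ and $0^\T$ all coincide with the corresponding operations of $\mathbb{X}$. Axioms \ref{CdCax3} and \ref{CdCax4} follow componentwise: writing out the Kleisli projections, identity and pairings via $\eta_A = \pair{1_A}{0}$ and (\ref{Kpair}), then applying the base identities \ref{CdCax3} and \ref{CdCax4} to each component. Somewhat more pleasantly, \ref{CdCax0} and the chain rule \ref{CdCax5} can be handled \emph{structurally} rather than componentwise. Since $g\circ^\T f = \mu\circ\T(g)\circ f$ and $\mu$ is linear (Lemma~\ref{lem:t-prod}.(i)), Lemma~\ref{chain-linear} gives $\dd[\mu\circ\T(g)\circ f] = \mu\circ\dd[\T(g)\circ f]$; combining the base chain rule \ref{CdCax5} with the identity $\T(\dd[g]) = \dd[\T(g)]\circ\phi$ of Lemma~\ref{T-diff} and the monad/naturality laws (which yield $f\circ^\T\pi_0^\T = f\circ\pi_0$) collapses both sides of Kleisli \ref{CdCax5} to $\mu\circ\T(\dd[g])\circ\phi^{-1}\circ\pair{f\circ\pi_0}{\dd[f]}$. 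The same two ingredients ($\mu$ linear and Lemma~\ref{T-diff}) turn base \ref{CdCax0} applied to $\mu\circ\T(f)$ directly into Kleisli \ref{CdCax0}.

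The substantive part — and the place I expect the real difficulty — is axiom \ref{CdCax2}, together with the second-order axioms \ref{CdCax6} and \ref{CdCax7}. Unfolding, Kleisli \ref{CdCax2} asks that $G\circ\pair{x}{y+z} = G\circ\pair{x}{y} + G\circ\pair{x+\varepsilon(y)}{z}$ for $G := \mu_B\circ\T(\dd[f])\circ\phi^{-1}$, where $x,y,z$ are regarded as base maps into $\T A$. Because (\ref{Kcomp2}) already contains both a first-order derivative term $\dd[g_0]\circ\pair{f_0}{f_1}$ and an action term $g_1\circ(f_0\oplus f_1) = g_1\circ(f_0 + \varepsilon(f_1))$, the map $G$ involves $\dd^2[f_0]$ and the $\varepsilon$-perturbation contributed by the component $\pi_{10}+\pi_{01}+\varepsilon(\pi_{11})$ of $\mu$; establishing near-additivity in the second argument therefore requires the base \ref{CdCax2} \emph{and} the second-order identities, and one must track the small perturbation $x + \varepsilon(y)$ \emph{exactly}. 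This is precisely the computation that fails for the incorrect infinitesimal extension of the conference paper, so the crux is to show that with $\varepsilon^\T = \varepsilon$ the perturbation terms line up. For \ref{CdCax6} and \ref{CdCax7} I would first invoke Lemma~\ref{lem:cdc6a} and instead verify the equivalent \ref{CdCax6a} and \ref{CdCax7a}, which avoid the $\varepsilon$-shifted arguments; these still demand expanding $\dd^\T[\dd^\T[f]] = \dd^2[f]$ against iterated Kleisli pairings and compositions and then repeatedly applying \ref{CdCax6a}, \ref{CdCax7a}, \ref{CdCax2} and Lemma~\ref{lem:ep-pair} (and, for the $\varepsilon^2$/$\varepsilon^3$ bookkeeping, Lemma~\ref{lem:d-epsilon}) to rearrange the four arguments into coincidence.

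Finally, I would remark that an alternative route is available through the equivalence between $\mathbb{X}_\mathsf{T}$ and the full subcategory of free algebras of $\mathbb{X}^\T_{lin}$, transporting the already-established Cartesian difference structure of $\mathbb{X}^\T_{lin}$ along $f\mapsto f^\sharp = \mu_B\circ\T(f)$. However, since the difference combinator of $\mathbb{X}^\T_{lin}$ acts on the underlying $\T$-algebra morphisms $\T A\to\T B$ whereas $\dd^\T$ acts on the Kleisli maps $A\to\T B$, this correspondence is itself mediated by $\phi$ and Lemma~\ref{T-diff}, so the equivalence merely relocates the computation above rather than eliminating it. I therefore favour the direct verification, concluding that $(\mathbb{X}_\mathsf{T}, \varepsilon^\T, \dd^\T)$ satisfies all of \ref{CdCax0}--\ref{CdCax7} and hence is a Cartesian difference category.
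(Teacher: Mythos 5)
Your overall strategy coincides with the paper's: the Cartesian left additive structure and the infinitesimal extension are indeed discharged by the preceding lemmas, \ref{CdCax1} reduces verbatim, \ref{CdCax3} and \ref{CdCax4} follow from linearity of $\eta$ and $\phi^{-1}$ (or componentwise, as you suggest), your structural treatment of \ref{CdCax0} and \ref{CdCax5} via linearity of $\mu$, Lemma~\ref{chain-linear} and Lemma~\ref{T-diff} is exactly the paper's, and replacing \ref{CdCax6}/\ref{CdCax7} by \ref{CdCax6a}/\ref{CdCax7a} through Lemma~\ref{lem:cdc6a} is also what the paper does. Up to that point the proposal is sound.

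The gap is at precisely the step you flag as ``the crux'', and the roadmap you give for it is wrong. You correctly unfold Kleisli differentiation to $G = \mu_B \circ \T(\dd[f]) \circ \phi^{-1}$, but you fail to notice that by Lemma~\ref{T-diff} this equals $\mu \circ \dd[\T(f)]$, i.e., $\dd^\T[f] \circ^\T \pair{x}{y}^\T = \mu \circ \dd[\T(f)] \circ \pair{x}{y}$ --- the single transfer identity on which the paper's whole proof rests. With it, Kleisli \ref{CdCax2} is not delicate: it is one application of base \ref{CdCax2} to the map $\T(f)$, giving $\dd[\T(f)]\circ\pair{x}{y+z} = \dd[\T(f)]\circ\pair{x}{y} + \dd[\T(f)]\circ\pair{x+\varepsilon(y)}{z}$, followed by additivity of the linear map $\mu$; since $+^\T$, $0^\T$ and $\varepsilon^\T$ literally coincide with $+$, $0$ and $\varepsilon$, the perturbation terms line up with nothing further to track. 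Contrary to your claim, no second-order identities enter: componentwise, \ref{CdCax2} for $\T(f)$ is just \ref{CdCax2} for $f$ and for $\dd[f]$. Likewise, a short computation (absorbing $\phi^{-1}\times\phi^{-1}$ via Lemma~\ref{chain-linear} and Lemma~\ref{T-diff}) gives $\dd^\T\left[\dd^\T[f]\right] \circ^\T \pair{\pair{x}{y}^\T}{\pair{z}{w}^\T}^\T = \mu \circ \dd^2\left[\T(f)\right] \circ \four{x}{y}{z}{w}$, after which \ref{CdCax6a} and \ref{CdCax7a} each follow by a single application of the corresponding base axiom to $\T(f)$; the repeated use of \ref{CdCax2}, \ref{CdCax6a}, \ref{CdCax7a} and the $\varepsilon^2/\varepsilon^3$ bookkeeping of Lemma~\ref{lem:d-epsilon} that you invoke are never needed. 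So the verifications you left open do close --- but by a two-line transfer argument already implicit in your own unfolding, not by the second-order manipulations you predicted, which misidentify where (indeed, whether) the difficulty lies.
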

\begin{proof}
We first note that we can easily compute the following:
  \begin{align*}
\dd^T[f] \circ^\T \pair{x}{y}^\T &=~ \mu \circ \T(\dd^T[f]) \circ \pair{x}{y}^\T \\
&=~  \mu \circ \T(\dd[f]) \circ \phi^{-1} \circ \pair{x}{y} \\
&=~ \mu \circ  \dd[{\T(f)}] \circ \phi \circ \phi^{-1} \circ \pair{x}{y}  \tag{by Prop.\ref{T-diff}} \\
&=~  \mu \circ  \dd[{\T(f)}] \circ \pair{x}{y}
\end{align*}
So we have that $\dd^T[f] \circ^\T \pair{x}{y}^\T = \mu \circ  \dd[{\T(f)}] \circ \pair{x}{y}$.
This will help simplify many of the calculations to follow, since
  $\T(\dd[f])$ appears everywhere due to the definition of Kleisli composition. We now prove the Cartesian difference category axioms.
\\\\
\noindent~\ref{CdCax0} $f \circ^\T (x +^\T \varepsilon^\T(y)) = f \circ^\T x +^\T \varepsilon^\T\left( \dd^\T[f] \circ^\T \langle x, y \rangle^\T \right)$ \\\\
    First note that $\mu$ is linear and therefore additive. Then we compute that:
     \begin{align*}
&f \circ^\T x +^\T \varepsilon^\T\left( \dd^\T[f] \circ^\T \langle x, y \rangle^\T \right) =~ f \circ^\T x + \varepsilon\left( \dd^\T[f] \circ^\T \langle x, y \rangle^\T \right) \\
&=~ \mu \circ \T(f) \circ x + \varepsilon\left( \mu \circ  \dd[{\T(f)}] \circ \pair{x}{y} \right) \\
&=~ \mu \circ \T(f) \circ x + \varepsilon\left( \mu \right) \circ  \dd[{\T(f)}] \circ \pair{x}{y} \tag*{\ref{Eax2}} \\
&=~ \mu \circ \T(f) \circ x + \mu \circ \varepsilon(1_{\T(A)}) \circ  \dd[{\T(f)}] \circ \pair{x}{y} \tag{Lemma~\ref{lem:mu-ep}}\\
&=~ \mu \circ \T(f) \circ x + \mu \circ \varepsilon\left( \dd[{\T(f)}] \circ \pair{x}{y} \right) \tag{Lemma~\ref{lem:epbij}} \\
&=~ \mu \circ \left(\T(f) \circ x + \varepsilon\left( \dd[{\T(f)}] \circ \pair{x}{y} \right) \right) \tag{$\mu$ is additive} \\
&=~\mu \circ \T(f) \circ (x + \varepsilon(y))  \tag*{\ref{CdCax0}} \\
&=~ f \circ^\T (x + \varepsilon(y)) \\
&=~f \circ^\T (x +^\T \varepsilon^\T(y))
\end{align*}
\\
\noindent~\ref{CdCax1}  $\dd^\T[f +^\T g] = \dd^\T[f] +^\T \dd^\T[g]$, $[\dd^\T[0^\T] =  \dd[0]$, and $\dd^\T[\varepsilon^\T(f)] = \varepsilon^\T\left( \dd^\T[f] \right)$ \\\\
    Since both the sum, zero maps, infinitesimal extension, and differential combinator in the Kleisli category are the same as in the base category, by~\ref{CdCax1} it easily follows that:
    \begin{align*}
\dd^\T[f +^\T g] = \dd[f+g] = \dd[f] + \dd[g] = \dd^\T[f] +^\T \dd^\T[g]
\end{align*}
\[\dd^\T[0^\T] = \dd[0] = 0 = 0^\T\]
\[\dd^\T[\varepsilon^\T(f)] = \dd[\varepsilon(f)] =  \varepsilon\left( \dd[f] \right) =  \varepsilon^\T\left( \dd^\T[f] \right) \]
\\
\noindent~\ref{CdCax2} $\dd^\T[f] \circ^\T \langle x, y +^\T z \rangle^\T =  \dd^\T[f] \circ^\T \langle x, y \rangle^\T + \dd^\T[f] \circ^\T \langle x + \varepsilon^\T(y), z \rangle^\T$\\
and ${\dd^\T[f] \circ^\T \langle x, 0^\T \rangle^\T = 0^\T}$
    \begin{align*}
\dd^\T[f] \circ^\T \pair{x}{y +^\T z}^\T &=~\mu \circ \dd\left[ \T(f) \right] \circ \pair{x}{y +^\T z} \\
&=~\mu \circ \dd\left[ \T(f) \right] \circ \pair{x}{y + z} \\
&=~\mu \circ \left( \dd\left[ \T(f) \right] \circ \pair{x}{y} +  \dd\left[ \T(f) \right] \circ \pair{x + \varepsilon(y)}{z} \right) \tag*{\ref{CdCax2}} \\
&=~\mu \circ \dd\left[ \T(f) \right] \circ \pair{x}{y} + \mu \circ \dd\left[ \T(f) \right] \circ \pair{x + \varepsilon(y)}{z} \tag{$\mu$ is additive} \\
&=~ \mu \circ \dd\left[ \T(f) \right] \circ \pair{x}{y} + \mu \circ \dd\left[ \T(f) \right] \circ \pair{x +^\T \varepsilon^\T(y)}{z} \\
&=~ \dd^\T[f] \circ^\T \langle x, y \rangle^\T + \dd^\T[f] \circ^\T \langle x + \varepsilon^\T(y), z \rangle^\T \\\\
\dd^\T[f] \circ^\T \pair{x}{0^\T}^\T &=~ \dd^\T[f] \circ^\T \pair{x}{0}^\T \\
&=~\mu \circ \dd\left[ \T(f) \right] \circ \pair{x}{0} \\
&=~ 0 \tag*{\ref{CdCax2}} \\
&=~ 0^\T
\end{align*}
\\
\noindent~\ref{CdCax3} $\dd^\T[\eta] = \pi_1^\T$ and $\dd^\T[\pi^\T_i] =\pi_i^\T\circ^\T \pi_1^\T$ \\\\
Recall that $\pi_i^\T = \eta \circ \pi_i$ and so $\pi_i^\T\circ^\T \pi_1^\T = \eta \circ \pi_i \circ \pi_1$. Then since $\eta$ is linear, we have that:
\begin{align*}
\dd^\T[\eta] &=~ \dd[\eta] \\
&=~ \eta \circ \pi_1 \tag{$\eta$ is linear} \\
&=~ \pi_1^\T \\\\
\dd^\T[\pi^\T_i] &=~ \dd[\pi^\T_i] \\
&=~ \dd[\eta \circ \pi_i] \\
&=~\eta \circ \dd[\pi_i] \tag{$\eta$ linear and Lem.\ref{chain-linear}} \\
&=~\eta \circ \pi_i \circ \pi_1 \tag*{\ref{CdCax3}}
\end{align*}
\\
\noindent~\ref{CdCax4} $\dd^\T[\langle f, g \rangle^\T] = \langle \dd^\T[f], \dd^\T[g] \rangle^\T$ \\\\
First note that since $\phi$ is linear, so it its inverse $\phi^{-1}$. Then we have that:
 \begin{align*}
\dd^\T[\langle f, g \rangle^\T] &=~ \dd[\langle f, g \rangle^\T] \\
&=~ \dd\left[\phi^{-1} \circ \langle f, g \rangle \right] \\
&=~ \phi^{-1} \circ \dd\left[ \langle f, g \rangle \right] \tag{$\phi^{-1}$ linear and Lem.\ref{chain-linear}} \\
&=~ \phi^{-1} \circ \langle \dd[f], \dd[g] \rangle  \tag*{\ref{CdCax4}} \\
&=~  \langle \dd[f], \dd[g] \rangle^\T \\
&=~ \langle \dd^\T[f], \dd^\T[g] \rangle^\T
\end{align*}
\\
\noindent~\ref{CdCax5}  $\dd^\T[g \circ^\T f] = \dd^\T[g] \circ^\T \langle f \circ^\T \pi_0^\T, \dd^\T[f] \rangle^\T$ \\\\
First note that since $\pi_0^\T = \eta \circ \pi_0$, it easily follows that $f \circ^\T \pi_0^\T = f \circ \pi_0$ (using the monad identities and the naturality of $\eta$). Therefore, we compute that:

\begin{align*}
 \dd^\T[g] \circ^\T \langle f \circ^\T \pi_0^\T, \dd^\T[f] \rangle^\T &=~ \mu \circ \dd\left[ \T(g) \right] \circ \langle f \circ^\T \pi_0^\T, \dd^\T[f] \rangle \\
 &=~\mu \circ \dd\left[ \T(g) \right] \circ \langle f \circ \pi_0, \dd[f] \rangle \\
    &=~ \mu \circ \dd[\T(g) \circ f]  \tag*{\ref{CdCax5}} \\
    &=~ \dd\left[\mu \circ\T(g) \circ f \right]  \tag{$\mu$ linear and Lem.\ref{chain-linear}} \\
    &=~\dd[g \circ^\T f] \\
    &=~ \dd^\T[g \circ^\T f]
\end{align*}
For the remaining two axioms, we will instead prove~\ref{CdCax6a} and~\ref{CdCax7a}. Before we do so, we first compute the following:
\begin{align*}
&\dd^\T\left[\dd^\T[f] \right] \circ^\T \left \langle \langle  x, y \rangle^\T, \langle z,w \rangle^\T \right \rangle^\T =~ \mu \circ \dd\left[ \T(\dd^\T[f]) \right] \circ  \left \langle \langle  x, y \rangle^\T, \langle z,w \rangle^\T \right \rangle \\
&=~ \mu \circ \dd\left[ \T(\dd[f]) \right] \circ  \left \langle \phi^{-1} \circ \langle  x, y \rangle, \phi^{-1} \circ \langle z,w \rangle \right \rangle \\
&=~ \mu \circ \dd\left[ \T(\dd[f]) \right] \circ (\phi^{-1} \times \phi^{-1}) \circ \four{x}{y}{z}{w} \\
&=~ \mu \circ \dd\left[ \T(\dd[f]) \circ \phi^{-1} \right]\circ \four{x}{y}{z}{w} \tag{$\phi^{-1}$ linear and Lem.\ref{chain-linear}} \\
&=~ \mu \circ \dd\left[ \dd[\T(f)] \circ \phi \circ \phi^{-1} \right]\circ \four{x}{y}{z}{w}  \tag{by Prop.\ref{T-diff}} \\
&=~\mu \circ \dd^2\left[ \T(f) \right] \circ \four{x}{y}{z}{w}
\end{align*}
Therefore we have that $\dd^\T\left[\dd^\T[f] \right] \circ^\T \left \langle \langle  x, y \rangle^\T, \langle z,w \rangle^\T \right \rangle^\T = \mu \circ \dd^2\left[ \T(f) \right] \circ \four{x}{y}{z}{w}$. \\\\
\noindent~\ref{CdCax6a}  $\dd^\T\left[\dd^\T[f] \right] \circ^\T \left \langle \langle  x, 0^\T \rangle^\T, \langle 0^\T, y \rangle^\T \right \rangle^\T = \dd^\T[f] \circ^\T  \langle x, y \rangle^\T$
\begin{align*}
\dd^\T\left[\dd^\T[f] \right] \circ^\T \left \langle \langle  x, 0^\T \rangle^\T, \langle 0^\T, y \rangle^\T \right \rangle^\T &=~ \mu \circ \dd^2 \left[\T(f) \right] \circ \left \langle  \langle  x, 0^\T \rangle, \langle 0^\T,y \rangle \right \rangle \\
&=~  \mu \circ \dd^2 \left[\T(f) \right] \circ \left \langle  \langle  x, 0 \rangle, \langle 0,y \rangle \right \rangle \\
&=~\mu \circ \dd[\T(f)] \circ \pair{x}{y} \tag*{\ref{CdCax6a}} \\
&=~  \dd^\T[f] \circ^\T  \langle x, y \rangle^\T
\end{align*}
\\
\noindent~\ref{CdCax7a}  $\dd^\T\left[\dd^\T[f] \right] \circ^\T \left \langle \langle x, y \rangle^\T, \langle z, w \rangle^\T \right \rangle^\T= \dd^\T\left[\dd^\T[f] \right] \circ \left \langle \langle x, z \rangle^\T, \langle y, w \rangle^\T \right \rangle^\T$
\begin{align*}
\dd^\T\left[\dd^\T[f] \right] \circ^\T \left \langle \langle x, y \rangle^\T, \langle z, w \rangle^\T \right \rangle^\T &=~\mu \circ \dd^2\left[ \T(f) \right] \circ \four{x}{y}{z}{w} \\
&=~  \mu \circ \dd \left[ \dd[\T(f)] \right] \circ \left \langle  \langle  x, z \rangle, \langle y,w \rangle \right \rangle \tag*{\ref{CdCax7a}} \\
&=~  \dd^\T\left[\dd^\T[f] \right] \circ \left \langle \langle x, z \rangle^\T, \langle y, w \rangle^\T \right \rangle^\T
\end{align*}
  So we conclude that the Kleisli category is a Cartesian difference category.
\end{proof}

We also point that in the case of a Cartesian differential category, since $\varepsilon =0$, it follows that $\varepsilon^T = 0$. Therefore we have that the Kleisli category of the tangent bundle monad of a Cartesian differential category is again a Cartesian differential category. To the knowledge of the authors, this is a novel observation.

\begin{cor}  For a Cartesian differential category $\mathbb{X}$, the Kleisli category $\mathbb{X}_\mathsf{T}$ is a Cartesian differential category with differential combinator $\D^\T = \D$.
\end{cor}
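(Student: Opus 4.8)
The plan is to leverage the two propositions immediately preceding this corollary together with Proposition~\ref{CdtoCD}, so that essentially no new computation is required. First I would invoke Proposition~\ref{CDtoCd} to regard the Cartesian differential category $\mathbb{X}$ as a Cartesian difference category whose infinitesimal extension is $\varepsilon = 0$ and whose difference combinator is $\dd = \D$. Then, applying the preceding proposition (that the Kleisli category of the tangent bundle monad of a Cartesian difference category is again a Cartesian difference category), the Kleisli category $\mathbb{X}_\mathsf{T}$ is a Cartesian difference category with infinitesimal extension $\varepsilon^\T$ and difference combinator $\dd^\T$.

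The crucial observation is that this Kleisli-level infinitesimal extension collapses to zero. Recall that $\varepsilon^\T$ is defined on a Kleisli map $f = \langle f_0, f_1 \rangle$ by $\varepsilon^\T(f) = \varepsilon(f)$, using the infinitesimal extension $\varepsilon$ of the base category. Since $\varepsilon = 0$ in the Cartesian difference structure coming from $\mathbb{X}$, we immediately obtain $\varepsilon^\T(f) = \varepsilon(f) = 0$ for every Kleisli map $f$, and hence $\varepsilon^\T = 0$.

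It follows that every object of $\mathbb{X}_\mathsf{T}$ is $\varepsilon^\T$-vanishing, so that $(\mathbb{X}_\mathsf{T})_{\varepsilon^\T\text{-van}} = \mathbb{X}_\mathsf{T}$. By Proposition~\ref{CdtoCD}, the subcategory of $\varepsilon^\T$-vanishing objects is a Cartesian differential category whose differential combinator is the difference combinator; since this subcategory is the whole of $\mathbb{X}_\mathsf{T}$, we conclude that $\mathbb{X}_\mathsf{T}$ is a Cartesian differential category with differential combinator $\D^\T = \dd^\T$. Finally, since $\dd^\T$ is by definition the base difference combinator $\dd$, and since $\dd = \D$ under the identification from Proposition~\ref{CDtoCd}, we obtain $\D^\T = \dd^\T = \dd = \D$, as claimed.

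Because all the heavy lifting was carried out in the preceding propositions, there is no genuine obstacle here; the only point requiring care is the bookkeeping of which infinitesimal extension and which combinator is meant at each stage. In particular, the argument hinges on verifying that $\varepsilon^\T$ is the base extension $\varepsilon$ rather than some twisted variant (which is precisely the content of its definition in the Kleisli section, and precisely the correction over the conference version), so that the hypothesis $\varepsilon = 0$ propagates cleanly to $\varepsilon^\T = 0$.
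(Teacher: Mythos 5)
Your proof is correct and follows essentially the same route as the paper, which simply observes that $\varepsilon = 0$ forces $\varepsilon^\T = 0$ and concludes that the difference structure on $\mathbb{X}_\mathsf{T}$ is a differential structure. Your extra detour through Proposition~\ref{CdtoCD} (noting every object is $\varepsilon^\T$-vanishing, so the vanishing subcategory is all of $\mathbb{X}_\mathsf{T}$) is just a slightly more formal way of packaging the same observation.
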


We conclude this section by briefly taking a look at the linear maps and the $\varepsilon^\T$-linear maps in the Kleisli category. A Kleisli map $f=\langle f_0, f_1 \rangle$ is linear in the Kleisli category if $\dd^\T[f] = f \circ^\T \pi^\T_1$, which amounts to requiring that:
\[ \langle \dd[f_0], \dd[f_1] \rangle = \langle f_0 \circ \pi_1, f_1 \circ \pi_1 \rangle \]
Therefore a Kleisli map is linear in the Kleisli category if and only if it is the pairing of maps which are linear in the base category. Similarly, a Kleisli map is $\varepsilon^\T$-linear if and only if is the pairing of $\varepsilon$-linear maps.

\section{Difference \texorpdfstring{$\lambda$}{Lambda}-Categories}%
\label{sec:differential-lambda-categories}

Categorical models of the differential $\lambda$-calculus~\cite{ehrhard2003differential} are known as differential $\lambda$-cate\-gories~\cite{bucciarelli2010categorical}. However, a differential $\lambda$-category is not simply a Cartesian differential category which is Cartesian closed. In a differential $\lambda$-category, both the additive structure and the differential structure must be compatible with the curry operator. The same is true of Cartesian difference categories. In this section, we introduce difference $\lambda$-categories. Briefly, a difference $\lambda$-category is a Cartesian difference category which is Cartesian closed and such that the Cartesian difference structure and the curry operations are compatible.

For a Cartesian closed category $\mathbb{X}$, we denote the exponential as $A \Rightarrow B$, the evaluation map as $\A{ev}: (A \Rightarrow B) \times A \to B$, and the curry of a map $f: A \times B \to C$ as $\Lambda(f): A \to (B \Rightarrow C)$, that is, $\Lambda(f)$ is the unique map such that $\A{ev} \circ (\Lambda(f) \times 1_B) = f$. Conversly, given a map of type $g: A \to (B \Rightarrow C)$, define $\Lambda^{-1}(g): A \times B \to C$ as $\Lambda^{-1}(g) = \A{ev} \circ (g \times 1_B)$. Of course, $\Lambda$ and $\Lambda^{-1}$ are inverses of each other, that is, $\Lambda\left( \Lambda^{-1}(g) \right) = g$ and $\Lambda^{-1}\left( \Lambda(f) \right) = f$. Another useful map will be the canonical natural isomorphism $\A{sw}: (A \times B) \times C \to (A \times C) \times B$ which swaps the last arguments, that is, $\A{sw}$ is defined as follows:
  \[
  \A{sw} :=  \pair{\pair{\pi_{00}}{\pi_1}}{\pi_{10}}
  \]
where recall that $\pi_{ij} = \pi_i \circ \pi_j$. Note that $\A{sw}$ is its own inverse, that is, $\A{sw} \circ \A{sw} = 1$.

\begin{defi} A \textbf{Cartesian closed left additive category} is a Cartesian left additive category $\mathbb{X}$ such that $\mathbb{X}$ is Cartesian closed and such that the curry operator preserves the additive structure, that is, $\Lambda(f + g) = \Lambda(f) + \Lambda(g)$ and $\Lambda(0) = 0$.
\end{defi}

\begin{defi}%
  \label{def:difference-lambda-category} A \textbf{difference $\lambda$-category} is a Cartesian difference category $\mathbb{X}$, with difference combinator $\dd$ and infinitesimal extension $\varepsilon$, such that $\mathbb{X}$ is a Cartesian closed left additive category and the following additional axioms hold:
 \begin{enumerate}[{\CdlCax{\arabic*}},ref={\CdlCax{\arabic*}},align=left]
    \item $\dd[\Lambda(f)] = \Lambda\pa{\dd[f]
      \circ \pair{\pa{\pi_0 \times 1_{}}}{\pa{\pi_1 \times 0}}
    }
    $\label{CdlCax1}
    \item $\Lambda(\varepsilon(f)) = \varepsilon\pa{\Lambda(f)}$\label{CdlCax2}
  \end{enumerate}
\end{defi}

The first axiom~\ref{CdlCax1} is identical to its differential combinator analogue in a differential $\lambda$-category. As such,  it follows the same broad intuition. First note that~\ref{CdlCax1} can also equivalently written in terms of $\A{sw}$ as follows:
  \[
    \dd[\Lambda(f)] = \Lambda\pa{
      \dd[f]
      \circ \pa{(1 \times 1) \times \pair{1}{0}}
      \circ \A{sw}
    }
  \]
which will be useful for many calculations in this section. Now given a map
$f : A \times B \to C$, we usually understand the composite:
\[\dd[f] \circ (1_{A \times B} \times \pair{1_A}{0})   \circ \A{sw} : (A \times A) \times B \to C\]
as the partial derivative of $f$ with respect to its first argument $A$. Hence,~\ref{CdlCax1} states that the
derivative of a curried function is precisely the curry of the partial derivative of the function with respect to its first argument. On the other hand,~\ref{CdlCax2} simply says that the curry of an infinitesimal extension of a function is the infinitesimal extension of the curry of the function. Using $\lambda$-calculus notation, this implies that $\lambda x . \varepsilon(f(t,y)) = \varepsilon (\lambda x . f(t,y))$.

\begin{exa}
Every differential $\lambda$-category~\cite{bucciarelli2010categorical} is precisely a difference $\lambda$-category such that $\varepsilon = 0$.
\end{exa}

\begin{exa}
  The category $\Ab$ (as defined in Section~\ref{discreteex}) is a difference $\lambda$-category. Given abelian groups $G, H$, the exponential $G \Rightarrow H$ is defined as the abelian group of all functions between $G$ and $H$, where the group structure is defined point-wise. The remaining Cartesian closed structure is defined in the standard way, that is, for an arbitrary function $f: G \times H \to K$, $\Lambda(f)(x)(y) = f(x,y)$. We leave it to the reader to check for themselves that $\Ab$ is indeed a Cartesian closed left additive category. Since $\varepsilon =1$, clearly~\ref{CdlCax2} holds automatically. So it remains to verify that the other axiom~\ref{CdlCax1} also holds:
  \begin{align*}
    \dd[\Lambda(f)](x, u)(y) &=~ \Lambda(f)(x + u)(y) - \Lambda(f)(x)(y)
    \\
    &=~ f(x + u, y) - f(x, y) \\
    &=~ f\left( (x,y) + (u,0) \right) - f(x,y) \\
    &=~\dd[f](x,y, u, 0) \\
    &=~ (\dd[f] \circ \left( (1 \times 1) \times \pair{1_{}}{0} \right) \circ \A{sw})(x,u,y) \\
    &=~ \Lambda(\dd[f] \circ \left( (1 \times 1) \times \pair{1_{}}{0} \right) \circ \A{sw})(x, u)(y)
  \end{align*}
  Therefore, $\Ab$ is indeed a difference $\lambda$-category.
\end{exa}

A central property of differential $\lambda$-categories is a deep correspondence
between differentiation and the evaluation map. As one would expect, the partial
derivative of the evaluation map gives a first-class derivative operator, see for example~\cite[Lemma~4.5]{bucciarelli2010categorical}, which provides
an interpretation for the differential substitution operator in the differential
$\lambda$-calculus. This property
still holds in difference $\lambda$-categories, although its formulation is somewhat more
involved.

\begin{lem}%
  \label{lem:lambda-d-ev} In a difference $\lambda$-category, for maps $g: A \times B \to C$ and $f: A \to B$, the following identities hold:
  \begin{enumerate}[(\roman{enumi}),ref={\thelem.\roman{enumi}}]
    \item $
    \dd[\A{ev} \circ \pair{\Lambda(g)}{f}]
    = \A{ev} \circ \pair{\dd[\Lambda(g)]}{f \circ \pi_0}
    +\dd[g] \circ \pair{\pair{\pi_0 + \varepsilon(\pi_1)}{f \circ
    \pi_0}}{\pair{0}{\dd[f]}}$
    \item $
      \dd[\A{ev} \circ \pair{\Lambda(g)}{f}]
      =
      \A{ev} \circ \pair{\dd[\Lambda(g)]}{f \circ \pi_0 + \varepsilon(\dd[f])}
      +\dd[g] \circ \pair{\pair{\pi_0}{f \circ \pi_0}}{\pair{0}{\dd[f]}}
    $
  \end{enumerate}
\end{lem}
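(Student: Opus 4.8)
The plan is to prove (i) by a direct application of the chain rule, and then obtain (ii) from (i) by relocating an infinitesimal perturbation. For (i), I would first simplify $\A{ev} \circ \pair{\Lambda(g)}{f} = g \circ \pair{1_A}{f}$, which follows from the defining equation $\A{ev} \circ (\Lambda(g) \times 1_B) = g$ together with the identity $\pair{\Lambda(g)}{f} = (\Lambda(g) \times 1_B) \circ \pair{1_A}{f}$. Applying the chain rule~\ref{CdCax5}, and computing $\dd[\pair{1_A}{f}] = \pair{\pi_1}{\dd[f]}$ via~\ref{CdCax3} and~\ref{CdCax4}, gives
\[
  \dd[\A{ev} \circ \pair{\Lambda(g)}{f}] = \dd[g] \circ \four{\pi_0}{f \circ \pi_0}{\pi_1}{\dd[f]}.
\]
Splitting the direction as $\pair{\pi_1}{\dd[f]} = \pair{\pi_1}{0} + \pair{0}{\dd[f]}$ and applying~\ref{CdCax2}, then simplifying the perturbed base point via $\pair{\pi_0}{f\circ\pi_0} + \varepsilon\pair{\pi_1}{0} = \pair{\pi_0 + \varepsilon(\pi_1)}{f\circ\pi_0}$ (using Lemma~\ref{lem:ep-pair} and~\ref{Eax1}), produces the second summand of (i) together with a residual term $\dd[g] \circ \four{\pi_0}{f\circ\pi_0}{\pi_1}{0}$.

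The crux of (i) is identifying this residual term with $\A{ev} \circ \pair{\dd[\Lambda(g)]}{f\circ\pi_0}$. Writing $\Psi := \pair{\pi_0 \times 1}{\pi_1 \times 0}$, axiom~\ref{CdlCax1} reads $\dd[\Lambda(g)] = \Lambda(\dd[g] \circ \Psi)$. Using $\A{ev} \circ \pair{u}{v} = \Lambda^{-1}(u) \circ \pair{1}{v}$ together with $\Lambda^{-1} \circ \Lambda = \mathrm{id}$, I get $\A{ev} \circ \pair{\dd[\Lambda(g)]}{f\circ\pi_0} = \dd[g] \circ \Psi \circ \pair{1}{f\circ\pi_0}$, and a direct computation $\Psi \circ \pair{1}{f\circ\pi_0} = \four{\pi_0}{f\circ\pi_0}{\pi_1}{0}$ closes (i).

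For (ii), I would transform (i) by relocating the $\varepsilon(\pi_1)$ perturbation out of the $\dd[g]$-summand and into the argument of the $\A{ev}$-summand. Abbreviating $P = \pair{\pi_0}{f\circ\pi_0}$, $Q = \pair{\pi_1}{0}$, and $D = \pair{0}{\dd[f]}$, the second summand of (i) is $\dd[g] \circ \pair{P + \varepsilon(Q)}{D}$; since $\pair{\varepsilon(Q)}{0} = \varepsilon\pair{Q}{0}$ by Lemma~\ref{lem:ep-pair}, axiom~\ref{CdCax0} expands it as $\dd[g] \circ \pair{P}{D} + \varepsilon\left( \dd[\dd[g]] \circ \pair{\pair{P}{D}}{\pair{Q}{0}} \right)$, whose first term is already the $\dd[g]$-summand of (ii). The interchange axiom~\ref{CdCax7a} swaps the middle arguments, $\dd[\dd[g]] \circ \pair{\pair{P}{D}}{\pair{Q}{0}} = \dd[\dd[g]] \circ \pair{\pair{P}{Q}}{\pair{D}{0}}$. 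Because $\Psi$ is linear (Lemma~\ref{id-linear} and Lemma~\ref{comp-linear}), Lemma~\ref{chain-linear} gives $\dd[\dd[g] \circ \Psi] = \dd[\dd[g]] \circ (\Psi \times \Psi)$, and a direct check shows $(\Psi \times \Psi) \circ \pair{\pair{1}{f\circ\pi_0}}{\pair{0}{\dd[f]}} = \pair{\pair{P}{Q}}{\pair{D}{0}}$. Hence the $\varepsilon$-correction is exactly $\varepsilon$ of the argument-slot derivative of $\A{ev} \circ \pair{\dd[\Lambda(g)]}{-} = (\dd[g] \circ \Psi) \circ \pair{1}{-}$; folding it back through~\ref{CdCax0} turns the argument $f\circ\pi_0$ of the $\A{ev}$-summand into $f\circ\pi_0 + \varepsilon(\dd[f])$, which yields (ii).

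The main obstacle will be the bookkeeping in (ii): tracking where each $\varepsilon$ sits and verifying that the correction produced by pulling the perturbation out of the $\dd[g]$-base coincides with the one produced by pushing a perturbation into the $\A{ev}$-argument. This coincidence rests on the interchange law~\ref{CdCax7a} aligning precisely with the reshuffling of projections effected by $\Psi \times \Psi$, which is routine but error-prone. One could instead first establish a closed formula for $\dd[\A{ev}]$ (its function-slot part being linear evaluation by the argument above, its argument-slot part governed by~\ref{CdlCax1}) and substitute, but I expect the perturbation-relocation route to keep the computations shorter.
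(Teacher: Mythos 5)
Your proof of (i) is correct and is essentially the paper's own argument: rewrite $\A{ev} \circ \pair{\Lambda(g)}{f} = g \circ \pair{1_A}{f}$, apply the chain rule~\ref{CdCax5} to get $\dd[g] \circ \four{\pi_0}{f \circ \pi_0}{\pi_1}{\dd[f]}$, split the tangent vector as $\pair{\pi_1}{0} + \pair{0}{\dd[f]}$ via~\ref{CdCax2} and Lemma~\ref{lem:ep-pair}, and identify the residual term with $\A{ev} \circ \pair{\dd[\Lambda(g)]}{f \circ \pi_0}$ through~\ref{CdlCax1}.

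For (ii) your route is correct but genuinely different from the paper's. The paper does not derive (ii) from (i): it redoes the same chain-rule computation and exploits the asymmetry of~\ref{CdCax2} by splitting the tangent vector in the opposite order, $\pair{\pi_1}{\dd[f]} = \pair{0}{\dd[f]} + \pair{\pi_1}{0}$, so that the perturbation $\varepsilon(\pair{0}{\dd[f]}) = \pair{0}{\varepsilon(\dd[f])}$ lands on the base point of the second summand, which~\ref{CdlCax1} then converts directly into $\A{ev} \circ \pair{\dd[\Lambda(g)]}{f \circ \pi_0 + \varepsilon(\dd[f])}$ --- first-order reasoning only, no second derivatives. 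Your perturbation-relocation argument checks out in detail: writing $P = \pair{\pi_0}{f \circ \pi_0}$, $Q = \pair{\pi_1}{0}$, $D = \pair{0}{\dd[f]}$, $\Psi = \pair{\pi_0 \times 1}{\pi_1 \times 0}$, the two applications of~\ref{CdCax0} produce, after~\ref{CdCax7a} on one side and $\dd\left[\dd[g] \circ \Psi\right] = \dd\left[\dd[g]\right] \circ (\Psi \times \Psi)$ (Lemma~\ref{chain-linear}, $\Psi$ being linear by Lemmas~\ref{id-linear} and~\ref{comp-linear}) together with $(\Psi \times \Psi) \circ \pair{\pair{1}{f \circ \pi_0}}{\pair{0}{\dd[f]}} = \pair{\pair{P}{Q}}{\pair{D}{0}}$ on the other, the identical correction term $\varepsilon\left(\dd\left[\dd[g]\right] \circ \pair{\pair{P}{Q}}{\pair{D}{0}}\right)$; commutativity of $+$ then rebalances the three summands into (ii). What your approach buys is a structural explanation of why (i) and (ii) are two faces of one identity --- their right-hand sides differ by matching second-order $\varepsilon$-corrections --- at the price of invoking the interchange axiom~\ref{CdCax7a} and second derivatives where the paper needs only a one-line reordering inside~\ref{CdCax2}. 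If you want the shorter proof, prove (ii) directly by the same template as (i) with the summands of $\pair{\pi_1}{\dd[f]}$ exchanged.
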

\begin{proof} For (i), we compute:
\begin{align*}
   &\dd[\A{ev} \circ \pair{\Lambda(g)}{f}] =~ \dd[g \circ \pair{1_A}{f}] \\
   &=~ \dd[g] \circ \pair{\pair{1_A}{f} \circ \pi_0}{\dd[\pair{1_A}{f}]}  \tag*{\ref{CdCax5}} \\
   &=~ \dd[g] \circ \four{\pi_0}{f \circ \pi_0}{\pi_1}{\dd[f]} \tag*{\ref{CdCax3}+\ref{CdCax4}} \\
   &=~ \dd[g] \circ \pair{\pair{\pi_0}{f \circ \pi_0}}{ \pair{\pi_1}{0} + \pair{0}{\dd[f]} } \\
   &=~\dd[g] \circ \four{\pi_0}{f \circ \pi_0}{\pi_1}{0} + \dd[g] \circ \pair{\pair{\pi_0}{f \circ \pi_0} + \varepsilon(\pair{\pi_1}{0} )}{\pair{0}{\dd[f]} }  \tag*{\ref{CdCax2}} \\
   &=~\dd[g] \circ \four{\pi_0}{f \circ \pi_0}{\pi_1}{0} + \dd[g] \circ \pair{\pair{\pi_0}{f \circ \pi_0} + \pair{ \varepsilon(\pi_1)}{0}}{\pair{0}{\dd[f]} }  \tag{Lem~\ref{lem:ep-pair} +~\ref{Eax1}} \\
   &=~\dd[g] \circ \four{\pi_0}{f \circ \pi_0}{\pi_1}{0} + \dd[g] \circ \four{\pi_0 + \varepsilon(\pi_1)}{f \circ \pi_0}{0}{\dd[f]} \\
   &=~\dd[g] \circ \pair{\pi_0 \times 1_B}{\pi_1 \times 0} \circ \pair{1}{f \circ \pi_0} + \dd[g] \circ \four{\pi_0 + \varepsilon(\pi_1)}{f \circ \pi_0}{0}{\dd[f]} \\
&=~ \Lambda^{-1}\left( \Lambda\left( \dd[g] \circ \pair{\pi_0 \times 1_B}{\pi_1 \times 0} \right) \right) \circ \pair{1}{f \circ \pi_0} + \dd[g] \circ \four{\pi_0 + \varepsilon(\pi_1)}{f \circ \pi_0}{0}{\dd[f]} \\
&=~ \Lambda^{-1}\left( \dd[\Lambda[g]]\right) \circ \pair{1}{f \circ \pi_0} + \dd[g] \circ \four{\pi_0 + \varepsilon(\pi_1)}{f \circ \pi_0}{0}{\dd[f]} \tag{by~\ref{CdlCax1}} \\
&=~ \A{ev} \circ ( \dd[\Lambda[g]] \times 1_B) \circ \pair{1}{f \circ \pi_0} + \dd[g] \circ \four{\pi_0 + \varepsilon(\pi_1)}{f \circ \pi_0}{0}{\dd[f]} \\
&=~ \A{ev} \circ \pair{ \dd[\Lambda[g]]}{f \circ \pi_0} + \dd[g] \circ \four{\pi_0 + \varepsilon(\pi_1)}{f \circ \pi_0}{0}{\dd[f]}
\end{align*}
On the other hand, for (ii), we compute:
\begin{align*}
  & \dd[\A{ev} \circ \pair{\Lambda(g)}{f}] =~ \dd[g \circ \pair{1_A}{f}] \\
   &=~\dd[g] \circ \pair{\pair{1_A}{f} \circ \pi_0}{\dd[\pair{1_A}{f}]}  \tag*{\ref{CdCax5}} \\
   &=~ \dd[g] \circ \four{\pi_0}{f \circ \pi_0}{\pi_1}{\dd[f]} \tag*{\ref{CdCax3}+\ref{CdCax4}} \\
      &=~ \dd[g] \circ \pair{\pair{\pi_0}{f \circ \pi_0}}{\pair{0}{\dd[f]} + \pair{\pi_1}{0} } \\
      &=~\dd[g] \circ \four{\pi_0}{f \circ \pi_0}{0}{\dd[f]} + \dd[g] \circ \pair{\pair{\pi_0}{f \circ \pi_0} + \varepsilon(\pair{0}{\dd[f]})}{\pair{\pi_1}{0} }  \tag*{\ref{CdCax2}} \\
&=~\dd[g] \circ \four{\pi_0}{f \circ \pi_0}{0}{\dd[f]} + \dd[g] \circ \pair{\pair{\pi_0}{f \circ \pi_0} + \pair{0}{\varepsilon(\dd[f])} }{\pair{\pi_1}{0} }   \tag{Lem~\ref{lem:ep-pair} +~\ref{Eax1}} \\
&=~\dd[g] \circ \four{\pi_0}{f \circ \pi_0}{0}{\dd[f]}  + \dd[g] \circ \four{\pi_0}{f \circ \pi_0 + \varepsilon(\dd[f])}{\pi_1}{0} \\
&=~\dd[g] \circ \four{\pi_0}{f \circ \pi_0}{0}{\dd[f]}  + \dd[g] \circ \pair{\pi_0 \times 1_B}{\pi_1 \times 0} \circ  \pair{1}{f \circ \pi_0 + \varepsilon(\dd[f])}\\
&=~\dd[g] \circ \four{\pi_0}{f \circ \pi_0}{0}{\dd[f]}  + \Lambda^{-1}\left( \Lambda\left( \dd[g] \circ \pair{\pi_0 \times 1_B}{\pi_1 \times 0} \right) \right) \circ  \pair{1}{f \circ \pi_0 + \varepsilon(\dd[f])}\\
 &=~\dd[g] \circ \four{\pi_0}{f \circ \pi_0}{0}{\dd[f]}  +  \Lambda^{-1}\left( \dd[\Lambda[g]]\right)  \circ  \pair{1}{f \circ \pi_0 + \varepsilon(\dd[f])} \tag{by~\ref{CdlCax1}} \\
 &=~\dd[g] \circ \four{\pi_0}{f \circ \pi_0}{0}{\dd[f]}  + \A{ev} \circ ( \dd[\Lambda[g]] \times 1_B) \circ  \pair{1}{f \circ \pi_0 + \varepsilon(\dd[f])}\\
 &=~ \dd[g] \circ \four{\pi_0}{f \circ \pi_0}{0}{\dd[f]} + \A{ev} \circ \pair{\dd[\Lambda[g]]}{f \circ \pi_0 + \varepsilon(\dd[f])}\\
 &=~ \A{ev} \circ \pair{\dd[\Lambda[g]]}{f \circ \pi_0 + \varepsilon(\dd[f])} + \dd[g] \circ \four{\pi_0}{f \circ \pi_0}{0}{\dd[f]}
\end{align*}
Thus the desired identities hold. \end{proof}

\section{Conclusions and Future Work}

We have presented Cartesian difference categories, which generalize Cartesian differential categories to account for more discrete definitions of derivatives while providing additional structure that is absent in change action models. We have also exhibited important examples and shown that Cartesian difference categories arise quite naturally from considering tangent bundles in any Cartesian differential category. We claim that Cartesian difference categories can facilitate the exploration of differentiation in discrete spaces, by generalizing techniques and ideas from the study of their differential counterparts. For example, Cartesian differential categories can be extended to allow objects whose tangent space is not necessarily isomorphic to the object itself~\cite{cruttwell2017cartesian}. The same generalization could be applied to Cartesian difference categories --- with some caveats: for example, the equation defining a linear map (Definition~\ref{def:linearity}) becomes ill-typed, but the notion of $\varepsilon$-linear map remains meaningful.

Perhaps the most important topic for further research is identifying and cataloguing more instances of Cartesian difference
categories. There is a number of very natural candidates for such; among those, we would like to single out synthetic differential
geometry~\cite{kock2006synthetic} and non-standard analysis~\cite{keisler2013elementary}, both of which feature a notion
of ``infinitesimal elements''. In either case, multiplying by a suitably chosen infinitesimal $\varepsilon$ would be an obvious
choice of an infinitesimal extension. As suggested by one of the anonymous reviewers, the case of non-standard analysis based on hyperreal numbers might be of particular interest:
since they contain invertible infinitesimals (as opposed to the nilpotent infinitesimals from SDG), one could define the derivative
of a function $f$ as $\dd[f](x, y) = \varepsilon^{-1}\left( f(x + \varepsilon y) - f(x)\right)$ (assuming a choice of an
``infinitesimal unit'' $\varepsilon$). With an infinitesimal extension being defined by multiplication by the above infinitesimal
unit $\varepsilon$, most of the axioms of a Cartesian difference category can be proven by a suitable generalisation of the proofs
for the calculus of finite differences (Section~\ref{discreteex}), although more work would be needed to prove \CdCax{7}. Lie
groups, and Lie algebras, have also been suggested by one of the anonymous reviewers as a potential source of models, but work in this direction is at a less
mature stage.

Another relevant path to consider is developing the analogue of the ``tensor'' story for Cartesian difference categories. Indeed, an important source of examples of Cartesian differential categories are the coKleisli categories of a tensor differential category~\cite{blute2006differential,blute2009cartesian}. A similar result likely holds for a hypothetical ``tensor difference category'', but it is not immediately clear how these should be defined: \CdCax{2} implies that derivatives in the difference sense are non-linear and therefore their interplay with the tensor structure will be much different. As suggested by one of the anonymous reviewers, a potential leading example of a ``tensor difference category'' is $\mathsf{Ab}$, the category of abelian groups and group homomorphisms between them, which is a model of linear logic via a linear-non-linear adjunction between $\mathsf{Ab}$ and $\mathsf{SET}$, the category of sets and arbitrary set functions between them~\cite{benton1994mixed}. The induced comonad (which models the exponential modality of linear logic) on $\mathsf{Ab}$ is given by mapping an abelian group $G$ to the free abelian group over the underlying set of $G$. Even more interesting is the fact that the coKleisli category of this comonad is isomorphic to $\overline{\mathsf{Ab}}$, as defined in Section~\ref{discreteex}, which is one of our main examples of a Cartesian difference category. As such, this is a pretty convincing argument that $\mathsf{Ab}$ will indeed be a prime candidate for an example of a ``tensor difference category'' and help guide us in constructing the correct definition.

A further generalization of Cartesian differential categories, categories with tangent structure~\cite{cockett2014differential} are defined directly in terms of a tangent bundle functor rather than requiring that every tangent bundle be trivial (that is, in a tangent category it may not be the case that $\T A = A \times A$). Some preliminary research on change actions has already shown that, when generalized in this way, change actions are precisely internal categories, but the consequences of this for change action models (and, \emph{a fortiori}, Cartesian difference categories) are not understood. More recently, some work has emerged about differential equations using the language of tangent categories~\cite{cockett2017connections}. We believe similar techniques can be applied in a straightforward way to Cartesian difference categories, where they might be of use to give an abstract formalization of discrete dynamical systems and difference equations.

  %% the following bibliography is gererated manually for the sake of brevity
  %% only; please use a separate .bib file in your submission

 \bibliographystyle{alpha}
 \bibliography{references}
\end{document}